\numberwithin{equation}{section}
\newtheorem{proposition}{Proposition}[section]
\newtheorem{theorem}[proposition]{Theorem}
\newtheorem{lemma}[proposition]{Lemma}
\newtheorem{remark}[proposition]{Remark}
\newtheorem{example}[proposition]{Example}
\newtheorem{definition}[proposition]{Definition}
\newtheorem{notation}[proposition]{Notation}
\newcommand{\ttt}{\theta}
\newcommand{\lam}{\lambda}
\newcommand{\lamm}{\Lambda}
\newcommand{\wq}{\infty}
\newcommand{\te}{\text}
\newcommand{\rr}{\mathbb{R}}
\newcommand{\pa}{\partial}
\newcommand{\p}{\phi}
\newcommand{\es}{\varepsilon}
\newcommand{\e}{\eta}
\newcommand{\dd}{\Delta}
\newcommand{\de}{\delta}
\newcommand{\al}{\alpha}
\newcommand{\bb}{\beta}
\newcommand{\ga}{\gamma}
\newcommand{\gaa}{\Gamma}
\newcommand{\ooo}{\Omega}
\newcommand{\si}{\sigma}
\newcommand{\z}{\left}
\newcommand{\x}{\right}
\newcommand{\ka}{\kappa}
\begin{document}
\title[Pogorelov estimates and rigidity theorems]{The Pogorelov estimates for the sum Hessian equation with rigidity theorem and parabolic versions}
\author[Weizhao Liang]{Weizhao Liang}
\address{Department of Mathematics, University of Science and Technology of China, Hefei, China.}
\email{Lwz740@mail.ustc.edu.cn}
\author[Jin Yan]{Jin Yan}
\address{Department of Mathematics, University of Science and Technology of China, Hefei, China.}
\email{yjoracle@mail.ustc.edu.cn}
\author[Hua Zhu]{Hua Zhu}
\address{School of Mathematical and Physics, Southwest University of Science and Technology, Mianyang, 621010, Sichuan Province, China.}
\email{zhuhmaths@mail.ustc.edu.cn}

\begin{abstract}
    In this paper, we primarily study the Pogorelov-type $C^2$ estimates for $(k-1)$-convex solutions of the sum Hessian equation under the assumption of semi-convexity, and apply these estimates to obtain a rigidity theorem for global solutions satisfying the corresponding conditions. Furthermore, we investigate the Pogorelov estimates and rigidity theorems for solutions to the parabolic versions under similar conditions. These results extend the works of \cite{He-Sheng-Xiang-Zhang-2022-CCM} and \cite{Liu-Ren-2023-JFA}.
\end{abstract}

\keywords {Hessian equations, Pogorelov-type $C^2$ estimate, $(k-1)$-convex solutions, semi-convexity, Rigidity theorem.}

\maketitle

\section{Introduction}\label{sec 1}
\setcounter{equation}{0}

In this paper, we consider the following Dirichlet problem for the elliptic sum Hessian equation:
\begin{equation}\label{main eq}
    \left\{\begin{aligned}
    &\sigma_{k}(\lam(D^2u))+\al \sigma_{k-1}(\lam(D^2u))  =f(x, u, D u), & &  \text {in } \Omega, \\
    &u  =0, & & \text {on } \partial \Omega,
    \end{aligned}\right.
\end{equation}
where $\ooo\subset \rr^n$ is a bounded domain, $u$ is an unknown function defined in $\overline{\Omega}$, and $\al$ is a given positive constant. We denote by $D u$ and $D^2 u$ the gradient and Hessian of $u$, respectively.  Additionally, we assume that $f \geq  C_{0}>0$. Let $\si_k(D^2 u) = \si_k(\lam(D^2 u))$ denote the $k$-th elementary symmetric function of the eigenvalues of the Hessian matrix $D^2 u$. Specifically, for $\lam = (\lam_1,\cdots,\lam_n) \in \rr^n$, we have 
$$
    \si_k(\lam)=\sum_{1\leq i_1<\cdots<i_k\leq n} \lam_{i_1} \cdots \lam_{i_k}.
$$

As is well-known, the following classic $k$-Hessian equation
\begin{equation}\label{1.2}
    \sigma_k(D^2u)=f(x,u,Du),\quad x \in\Omega
\end{equation}
plays a significant role in the study of fully nonlinear partial differential equations and geometric analysis. This equation is closely related to numerous geometric problems (e.g., \cite{BK,CNS3,CNS4,CY,GLL,Mei-Zhu-2024-JGA,GLM,O,P3,TW}). Equation \eqref{main eq} generalizes \eqref{1.2} by combining the $k$-Hessian operators linearly. These types of equations have been extensively studied and applied in geometry. For instance, Harvey and Lawson \cite{HL80} studied special Lagrangian equations within this framework. Krylov \cite{Kry} and Dong \cite{Dong} investigated similar equations and derived curvature estimates based on the concavity of the operators. Guan and Zhang \cite{GZ} analysed curvature estimates for related equations where the right-hand side does not depend on the gradient but involves coefficients tied to hypersurface positions. Additionally, hyperbolic geometry problems reduce to equations of this type \cite{EGM}.

A central problem in studying Hessian equations like \eqref{1.2} is establishing $C^2$ estimates. For relevant work, see \cite{CNS1,CNS3,CW,B,GLL,GLM,GRW}. Notably, when the right-hand side depends on the gradient, obtaining $C^2$ estimates remains challenging \cite{GLL}.

In this paper, we focus on Pogorelov-type $C^2$ estimates for the Dirichlet problem \eqref{main eq}. These estimates, which involve interior $C^2$ bounds influenced by boundary conditions, were first developed by Pogorelov \cite{P3} for Monge-Ampère equations and  play a crucial role in studying the regularity of fully nonlinear equations. For instance, Pogorelov-type $C^2$ estimates have been used to analyse degenerate Monge-Ampère equations \cite{Blocki,Savin}. For $k$-Hessian equations \eqref{1.2} with $f=f(x,u)$, Chou and Wang \cite{CW} established Pogorelov-type $C^2$ estimates for $k$-convex solutions, yielding
\begin{equation*}
    (-u)^{1+\delta}\Delta u\leq C.
\end{equation*} 
Sheng, Ubas, and Wang \cite{SUW} extended such estimates to curvature equations for graphic hypersurfaces, including Hessian equations. For $f=f(x,u,Du)$, Li, Ren, and Wang \cite{LRW2} developed new techniques to drop the small $\es$ and proved Pogorelov-type $C^2$ estimates for $(k+1)$-convex solutions of \eqref{1.2}:
\begin{equation*}
    (-u)\Delta u\leq C.
\end{equation*}
Recently, Tu \cite{Tu-2024-arxiv} derive the Pogorelov-type for \eqref{1.2} under $k$-convex and semi-convexity assumptions, following ideas from Lu \cite{Lu-2023-CVPDE}.

For sum Hessian equations, Li, Ren, and Wang \cite{LRW1} showed that the operator $\sigma_k+\al\sigma_{k-1}$ is elliptic in the convex-cone
$$
\tilde\Gamma_k=\Gamma_{k-1}\cap\{\lambda|\sigma_k(\lambda)+\alpha\sigma_{k-1}(\lambda)>0\},
$$
which is the admissible set for the equations in \eqref{main eq}. Here, $(k-1)$-convex solutions and $f>0$ ensure ellipticity. Then Liu and Ren \cite{Liu-Ren-2023-JFA} established Pogorelov-type $C^{2}$ estimates for $k$-convex solutions of \eqref{main eq}. 

From an analytical perspective, a natural question is whether the $k$-convexity assumption in their paper can be relaxed. In this paper, we address this question by establishing Pogorelov-type $C^2$ estimates under $(k-1)$-convexity and semi-convexity assumptions. Additionally, we derive a parabolic version of the problem and a rigidity theorem. Building on \cite[Lemma 2.7 (a)]{Liu-Ren-2023-JFA}, our assumptions are weaker than $k$-convexity.

First, we introduce definitions of certain convexity and growth conditions for the solution.

\begin{definition}\label{def sigma_k}
    Let $\ooo \subset \mathbb{R}^n$ be a domain and $u\in C^2(\ooo)$. 
    \par
    $(1)$ We say that $u$ is $k$-convex if the eigenvalues $\lambda(x) = (\lambda_1(x), \cdots, \lambda_n(x))$ of the Hessian $D^2 u(x)$ belong to $\Gamma_k$ for all $x \in \ooo$, where $\Gamma_k$ is the Garding's cone
    $$
    \Gamma_k = \{ \lambda \in \mathbb{R}^n \mid \sigma_i(\lambda) > 0, \, i = 1, \dots, k \}.
    $$
    \par
    $(2)$ We say that $u$ is semi-convex if there exists a constant $K_0>0$, such that
    $$
    \lam_i(x)> -K_0, \quad 1\leq i\leq n, \quad \forall \ x \in \ooo.
    $$
    \par
    $(3)$ We say that $u$ satisfies the quadratic growth condition if there condition positive constants $a,b$ and a sufficiently large $R$ such that
    \begin{equation}\label{growth condition}
        u(x)\geq a|x|^2-b,\quad |x|\geq R.
    \end{equation}
    \par
    $(4)$ We denote $\tilde{\gaa}_{k}$ as: 
    \begin{equation*}
        \tilde\Gamma_k=\Gamma_{k-1}\cap\{\lambda|\sigma_k(\lambda)+\alpha\sigma_{k-1}(\lambda)>0\}.
    \end{equation*}
\end{definition}

For the sum Hessian equation, we can obtain the following Pogorelov-type estimates.

\begin{theorem}\label{thm main thm: elliptic}
    Let $\ooo\subset \rr^n$ be a bounded domain and $f(x, u, p) \in C^{1,1}(\overline{\ooo} \times \mathbb{R} \times \mathbb{R}^n)$ be a positive function. Assume $u \in C^4(\ooo) \cap C^{0,1}(\overline{\ooo})$ is a $(k-1)$-convex and semi-convex solution to the Dirichlet problem \eqref{main eq} with $\alpha > 0$. Then there exist two positive constant $\ga_0$ and $C$ such that
    \begin{equation}\label{eq main result}
        (-u)^{\ga_0} \, \dd u \leq C,
    \end{equation}
    where $\ga_0$, $C$ depend on $n$, $k$, $\inf f$, $\|f\|_{C^{1,1}}$, $\|u\|_{C^1}$, and $\operatorname{diam}(\ooo)$.
\end{theorem}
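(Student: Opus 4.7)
The plan is to adapt the classical Pogorelov test-function argument of Chou--Wang \cite{CW} to the sum operator $F=\si_k+\al\si_{k-1}$ on $\tilde\gaa_k$, using the ellipticity established by Liu--Ren \cite{Liu-Ren-2023-JFA} and the strategy for relaxing $k$-convexity developed by Lu \cite{Lu-2023-CVPDE} and Tu \cite{Tu-2024-arxiv}. Specifically, I would consider
\begin{equation*}
    \phii(x,\xii)=\ga_0\log(-u(x))+\log u_{\xii\xii}(x)+\pp\z(\tfrac12|Du(x)|^2\x),\qquad \xii\in S^{n-1},
\end{equation*}
with $\pp$ an increasing auxiliary function to be chosen (e.g.\ $\pp(t)=-\log(M-t)$ with $M$ large, or $\pp(t)=\bb t$). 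Since $u<0$ in $\ooo$ and $u=0$ on $\pa\ooo$ (the former follows because $\dd u=\si_1>0$ in the admissible cone), $\phii\to-\wq$ near $\pa\ooo$, so the maximum is attained at some interior $(x_0,\xii_0)$; it then suffices to bound $\lam_1(x_0)$.

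Rotate coordinates so that $\xii_0=e_1$ and $D^2u(x_0)=\te{diag}(\lam_1,\ldots,\lam_n)$ with $\lam_1\geq\cdots\geq\lam_n$, using the standard perturbation trick for possible multiplicity. With $F^{ii}=\pa F/\pa u_{ii}$ the first-order conditions $\phii_i(x_0)=0$ give
\begin{equation*}
    \frac{u_{11i}}{\lam_1}=-\ga_0\frac{u_i}{u}-\pp'(|Du|^2/2)\,u_i\lam_i,
\end{equation*}
and contracting $\phii_{ij}\leq0$ with $F^{ii}$, inserting \eqref{main eq} together with its first and second differentiations, and discarding the concave second-variation term $\s F^{pq,rs}u_{pq1}u_{rs1}\leq 0$ from the Garding-type concavity of $F$ on $\tilde\gaa_k$, produces the usual master inequality balancing positive contributions $\pp''\s F^{ii}u_i^2\lam_i^2+\pp'\s F^{ii}\lam_i^2$ against the bad third-order term $\s F^{ii}u_{11i}^2/\lam_1^2$ and remainders of order $\ga_0|u|^{-2}\s F^{ii}$ and $|u|^{-1}\s F^{ii}\lam_i$.

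The main difficulty, and the reason this improves on \cite{Liu-Ren-2023-JFA}, is controlling $\s F^{ii}u_{11i}^2/\lam_1^2$ without the strong Newton--Maclaurin positivity of $\{F^{ii}\}$ afforded by full $k$-convexity. Here some $\lam_i$ may be negative, but semi-convexity confines them to $(-K_0,\wq)$, and \cite[Lemma~2.7(a)]{Liu-Ren-2023-JFA} still yields the essential lower bound $\s_i F^{ii}\geq c>0$ under $(k-1)$-convexity alone. Following Lu and Tu, I would partition the indices into a good set $G=\{i:\lam_i\geq-\ttt\lam_1\}$ and its complement $B\subseteq\{i:\lam_i\in(-K_0,0)\}$: on $G$ the bad term is absorbed by substituting the first-order condition for $u_{11i}$ and applying Cauchy--Schwarz, which trades $u_{11i}^2/\lam_1^2$ for $C\ga_0^2 u_i^2/u^2+C(\pp')^2 u_i^2\lam_i^2$, quantities dominated by the good positive terms above; on $B$, the bounds $|\lam_i|\leq K_0$ and the cardinality restriction from $\lam\in\gaa_{k-1}$ make the contribution of lower order, absorbed into $C(1+\pp')\s F^{ii}$.

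Choosing $\ga_0\in(0,1)$ sufficiently small and the parameters in $\pp$ sufficiently large then forces the positive terms to dominate, leading at $x_0$ to an inequality of the form $c\,F^{11}\lam_1\leq C(1+\s_i F^{ii})$. Combined with a lower bound for $F^{11}$ coming from the cone structure of $\tilde\gaa_k$ together with semi-convexity, this forces $\lam_1(x_0)\leq C$; unwinding $\phii$ gives $(-u)^{\ga_0}\lam_1\leq C$ in $\ooo$, and since $\dd u\leq n\lam_1$ by the eigenvalue ordering we obtain \eqref{eq main result}.
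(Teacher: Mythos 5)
Your overall framework (a Pogorelov test function, an interior maximum, contraction with the linearized operator) is the right one, but the proposal breaks down at the single step that constitutes the actual content of the theorem: the treatment of the second-variation term. You propose to discard $\sum S_k^{pq,rs}u_{pq1}u_{rs1}\le 0$ ``from the Garding-type concavity of $F$ on $\tilde\Gamma_k$''. This is not available: $F=\si_k+\al\si_{k-1}$ is not concave on $\tilde\Gamma_k$; only $F^{1/k}$ and the quotients $(S_k/S_l)^{1/(k-l)}$ are (Lemma~\ref{lem 2.3}~(d)), so the term has no sign. Using concavity of $F^{1/k}$ instead gives $-S_k^{pq,rs}u_{pq1}u_{rs1}\ge -\frac{k-1}{k}\frac{(S_k)_1^2}{S_k}$, and because $f$ depends on $Du$ the differentiated equation yields $(S_k)_1=f_{x_1}+f_uu_1+f_{p_1}u_{11}$, so $(S_k)_1^2/S_k$ is of order $u_{11}^2$; after multiplying by the weight this is of the same order as the good term $AS_k^{11}u_{11}^2\ge A\theta S_ku_{11}$, enters with the wrong sign, and has an unfavorable constant. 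The whole of Section~\ref{sec 3} (Lemmas~\ref{lem A_i+... geq 0}--\ref{lem 3.4}) exists to beat exactly this: one must retain a coefficient strictly larger than $1$ in front of $(S_k)_1^2/S_k$ via the quotient-concavity inequality of Lemma~\ref{lem 2.3}~(e), and then, to exploit the resulting positive term $\frac{S_k}{S_\mu^2}\bigl(\sum_jS_\mu^{jj}u_{jj1}\bigr)^2$, one needs a case analysis on which ratios $\lam_\mu/\lam_1$ are bounded below (Cases 1--3 of Lemma~\ref{lem A_1+... geq ...}), with semi-convexity entering to control $S_\mu^{pp}S_\mu^{qq}-S_\mu S_\mu^{pp,qq}$ and $|\lam_k|$. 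Your good-set/bad-set decomposition $G=\{i:\lam_i\ge-\theta\lam_1\}$ in the style of Lu and Tu addresses a different, and here easier, difficulty and does not substitute for this.

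Two further points. First, your closing inequality $c\,F^{11}\lam_1\le C(1+\sum_iF^{ii})$ does not bound $\lam_1$: since $\sum_iF^{ii}=(n-k+1)\si_{k-1}+\al(n-k+2)\si_{k-2}$ itself grows with $\lam_1$, you need the stronger conclusion reached in \eqref{inequality 6}, namely $0\ge \frac{A}{4}\theta S_ku_{11}-C(A+B)$, which uses $S_k^{11}u_{11}\ge\theta S_k$ and $S_k\ge C_0>0$. Second, the paper's test function uses $\log P_m$ with $P_m=\sum_i\ka_i^m$, $\ka_i=\lam_i+K_0$, rather than $\log u_{\xii\xii}$; the large power $m$ is what makes the third-order cancellations close (e.g.\ the requirement $(3m-3)(m-3)\ge(1+m)^2$ in Lemma~\ref{lem A_i+... geq 0}), and the parameter hierarchy $L\gg M\gg B\gg A\gg m\gg1$, $mB\gg A^2$ of \eqref{constants condition 1} is needed in Lemma~\ref{lem 3.4} to absorb the residual negative term when $S_k-\ka_1S_k^{11}<0$. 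A plain $\log\lam_1$ with a generic $\pp(|Du|^2/2)$ does not obviously reproduce these cancellations, and the proposal gives no argument for them.
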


An important application of the interior $C^2$ estimates is to get rigidity theorems for certain equations. Consider the following $k$-Hessian equation in $n$-dimensional Euclidean spaces:
\begin{equation}\label{1.5}
    \sigma_k(D^2u)=1,
\end{equation}
Chang and Yuan \cite{CY} posed the following question: Are the entire solutions of \eqref{1.5} with lower bound only quadratic polynomials?

For $k=1$, equation \eqref{1.5} is linearand the result directly follows from the Liouville property of harmonic functions. For $k=n$, equation \eqref{1.5} corresponds to the Monge-Ampère equation, a well-known result in geometry. The works of Jörgens \cite{K}, Calabi \cite{E}, and Pogorelov \cite{P1, P3} established that any entire strictly convex solution to this equation must be a quadratic polynomial. Cheng and Yau \cite{CYa} provided an alternative geometric proof, and in 2003, Caffarelli and Li \cite{CL} extended the classical result of Jörgens, Calabi, and Pogorelov.

For general $1<k<n$, there are few related results. For $k=2$, Chang and Yuan \cite{CY} proved that if the Hessian satisfies 
$$
D^2u\geq \z[\delta-\sqrt{\frac{2n}{n-1}}\x] I,
$$
for any $\delta>0$, then the any entire solution of the equation \eqref{1.5} must be a quadratic polynomial. For general $k$, Bao, Chen, Guan, and Ji \cite{BCGM} showed that strictly convex entire solutions of \eqref{1.5} with quadratic growth are quadratic polynomials. Later, Li, Ren, and Wang \cite{LRW2} extended this result by relaxing the strictly convex condition to $(k+1)$-convex solutions, and proved that entire $(k+1)$-convex solutions of \eqref{1.5} with quadratic growth are also quadratic polynomials. However, in 2016, Warren \cite{Warren} constructed examples showing that \eqref{1.5} admits non-polynomial entire $k$-convex solutions when $n\geq2k-1$.

As a consequence of Theorem \ref{thm main thm: elliptic}, we can derive a rigidity theorem for the $(k-1)$-convex solutions to the sum Hessian equation with semi-convex condition:
\begin{equation}\label{main eq 1}
    \si_k(D^2 u)+\al \si_{k-1}(D^2 u)=1, \quad \te{in} \ \rr^n,
\end{equation}
where $\al>0$. The rigidity theorem states:
\begin{theorem}\label{thm rigidity theorem 1}
    Let $u \in C^4(\ooo) \cap C^{0,1}(\overline\ooo)$ be a $(k-1)$-convex and semi-convex solution to \eqref{main eq 1} with quadratic growth. Then $u$ must be a quadratic polynomial.
\end{theorem}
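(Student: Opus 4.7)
The plan is to combine a rescaling argument with the Pogorelov estimate of Theorem~\ref{thm main thm: elliptic} to obtain a global bound on $D^2 u$, and then to run a Liouville-type argument to conclude that $u$ must be a quadratic polynomial.

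For $s\ge 1$ I set $u_s(y):=s^{-2}u(sy)$. The identity $D^2u_s(y)=D^2u(sy)$ shows that $u_s$ solves the same equation \eqref{main eq 1}, is $(k-1)$-convex, and is semi-convex with the same constant $K_0$. The quadratic growth condition becomes $u_s(y)\ge a|y|^2-bs^{-2}$, so the sublevel set $\widetilde\Omega_s:=\{u_s<1\}$ is contained in a fixed ball $B_{r_0}(0)$ once $s$ is large enough. A gradient estimate for $(k-1)$-convex and semi-convex solutions of \eqref{main eq 1}---obtained by writing $u+K_0|x|^2/2$ as a convex function with controlled growth and invoking a Trudinger-type bound---shows that $\|u_s\|_{C^1(\widetilde\Omega_s)}$ is bounded uniformly in $s$. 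Applying Theorem~\ref{thm main thm: elliptic} to $u_s-1$ on $\widetilde\Omega_s$ (with $f\equiv 1$) then yields
\[
(1-u_s(y))^{\gamma_0}\,\Delta u_s(y) \le C
\]
uniformly in $s$. For any fixed $y_0$ with $a|y_0|^2<1/2$ and $s$ sufficiently large one has $u_s(y_0)\le 1/2$, whence $\Delta u(sy_0)=\Delta u_s(y_0)\le 2^{\gamma_0}C$. Letting $s\to\infty$ and varying $y_0$, the point $sy_0$ sweeps out all of $\mathbb{R}^n$, so $\Delta u\in L^\infty(\mathbb{R}^n)$. Combined with the semi-convex bound $\lambda_i\ge -K_0$, this gives the global Hessian bound $|D^2u|\le M$.

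Once $|D^2u|$ is globally bounded, the eigenvalue vector $\lambda(D^2u)$ lies in a compact set; using $(k-1)$-convexity together with $\sigma_k+\alpha\sigma_{k-1}=1$ one checks that $\lambda$ stays in a compact subset of $\tilde\Gamma_k$ away from $\partial\tilde\Gamma_k$, so the linearization $F^{ij}:=\partial F/\partial u_{ij}$ of $F:=\sigma_k+\alpha\sigma_{k-1}$ is uniformly elliptic on $\mathbb{R}^n$. Rewriting the equation in a concave form (a suitable power or combination known to be concave on $\tilde\Gamma_k$) and invoking Evans--Krylov regularity gives $u\in C^{2,\alpha}_{\mathrm{loc}}(\mathbb{R}^n)$ with uniform interior estimates on every ball. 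Differentiating \eqref{main eq 1} once in a fixed direction $e$ yields $F^{ij}(u_e)_{ij}=0$, and differentiating again together with the concavity of the reformulated operator gives $F^{ij}(u_{ee})_{ij}\ge 0$. Thus $u_{ee}$ is a bounded subsolution of a uniformly elliptic linear equation with H\"older coefficients on $\mathbb{R}^n$, and the classical Liouville theorem (via the weak Harnack inequality) forces $u_{ee}$ to be constant. Varying $e$ over all directions, every entry of $D^2u$ is constant, so $u$ must be a quadratic polynomial.

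The main obstacle is identifying the correct concave reformulation of $F=\sigma_k+\alpha\sigma_{k-1}$: since $F$ itself is not concave on $\tilde\Gamma_k$, one must select a transformation that is both concave and compatible with the available uniform ellipticity, so that $u_{ee}$ becomes an honest linear-elliptic subsolution. A secondary subtlety is the uniform $C^1$ bound for the rescaled family $u_s$ on $\widetilde\Omega_s$; this is precisely where the semi-convexity hypothesis is essential, converting the one-sided Hessian bound and quadratic growth into the linear bound on $|Du|$ needed to close the loop with Theorem~\ref{thm main thm: elliptic}.
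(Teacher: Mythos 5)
Your overall architecture (rescale, apply a Pogorelov estimate on sublevel sets, use quadratic growth to keep those sets in a fixed ball so the constants are scale-independent, deduce a global bound on $\Delta u$, then run a Liouville-type step) is the same as the paper's. But there is a genuine gap at the step you yourself flag as a ``secondary subtlety'': you apply Theorem \ref{thm main thm: elliptic}, whose constants $\ga_0, C$ depend on $\|u\|_{C^1}$, so you need $\|Du_s\|_{L^\infty(\widetilde\Omega_s)}$ bounded uniformly in $s$, i.e.\ the linear growth bound $|Du(x)|\le C(1+|x|)$ on all of $\rr^n$. Your proposed derivation --- convexity of $u+K_0|x|^2/2$ plus a ``Trudinger-type bound'' --- does not close: a convex function controls its gradient at $x$ only in terms of an \emph{upper} bound on the function on a strictly larger set, and the quadratic growth hypothesis \eqref{growth condition} is a lower bound only; the sublevel set $\{u_s<1\}$ gives an upper bound on itself but nothing on a neighborhood of definite width. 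Interior gradient estimates of Trudinger/Chou--Wang type are established for $k$-convex solutions of $\sigma_k=f$, not for $(k-1)$-convex solutions of the sum Hessian operator, so invoking them here is unjustified. The paper circumvents exactly this issue: since $f\equiv1$ does not depend on $Du$, it reproves the Pogorelov estimate with the test function $\phi=M\log(-u)+\log P_m+\frac{B}{2}|x|^2$ (no $\frac{A}{2}|Du|^2$ term), obtaining Lemma \ref{lem 5.1}, whose constants depend only on $n$, $k$, $f$ and $\operatorname{diam}(\ooo)$. The remark preceding Lemma \ref{lem 5.1} states that this is precisely ``to get rid of the dependence of $Du$.'' Your argument needs this lemma (or an honest proof of the uniform gradient bound) to be complete.

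Your concluding Liouville step also differs from the paper's, and here the situation is better than you fear: the ``main obstacle'' of finding a concave reformulation is already resolved by Lemma \ref{lem 2.3} (d), which gives concavity of $S_k^{1/k}$ on $\tilde\Gamma_k$, and uniform ellipticity follows from the global Hessian bound together with Lemma \ref{lem 2.5} (a) keeping $\lam$ in a compact subset of $\tilde\Gamma_k$. So the route via $F^{ij}(u_{ee})_{ij}\ge0$ and the Liouville/Harnack theorem for bounded subsolutions is workable. The paper's alternative is slightly more economical: after the uniform bound $\dd u\le C_0$ it applies Evans--Krylov to the rescaled solutions to get $\|D^2u\|_{C^{0,\bb}(B_{R/2})}\le C/R^{\bb}$ and lets $R\to\wq$, so $D^2u$ is constant without needing the subsolution inequality. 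Either ending is fine once the gradient-bound gap above is repaired.
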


Next, we study the following parabolic sum Hessian equation: 
\begin{equation}\label{main parabolic eq}
    \left\{\begin{aligned}
    &-u_t \cdot \z(\sigma_{k}(D^2u)+\al \sigma_{k-1}(D^2u)\x)  =f, & &\text {in } \mathcal{O}, \\
    &u  =0, & & \text {on } \partial \mathcal{O},
    \end{aligned}\right.
\end{equation}
where $\al>0$ and $\mathcal{O} \subset \rr^n \times (-\wq,0]$ is a bounded domain. For $t\leq 0$, we denote 
$$
\mathcal{O}(t):=\{x \in \rr^n|(x,t)\in \mathcal{O}\}, \quad \te{and} \quad  \underline{t}:=\inf\{t\leq 0|\mathcal{O}(t)\ne \emptyset\}.
$$ 
The parabolic boundary $\pa \mathcal{O}$ is defined by
$$
\pa \mathcal{O}=(\overline{\mathcal{O}(\underline{t})}\times \{\underline{t}\})\cup \bigcup_{t\leq 0}(\pa \mathcal{O}\times \{t\}).
$$

As far as we know, rigidity theorems for parabolic fully nonlinear equations are most well-known in the context of parabolic Monge-Ampère equations. Guti$\acute{e}$rrez and Huang \cite{GH} extended theorem of J$\ddot{o}$rgens, Calabi, and Pogorelov to parabolic Monge-Amp\`ere equations. Xiong and Bao \cite{XB11} proved rigidity theorems for 
$$
u_t=(\det D^2u)^{1/n}.
$$ 
Zhang, Bao and Wang \cite{ZBW} extended Caffarelli-Li theorem \cite{CL} to the parabolic Monge-Amp\`ere equation 
$$
-u_t \cdot \det D^2u =f,
$$ 
and further investigated the asymptotic behavior of solutions at infinity. Their approach also provides a framework for treating other parabolic Monge-Ampère equations. For general $k$, Nakamori and Takimoto \cite{NAKAMORI2015211} proved  rigidity theorems for convex-monotone solutions to parabolic $k$-Hessian equations. Subsequently, He, Sheng, and Xiang \cite{He-Sheng-Xiang-Zhang-2022-CCM} extended this result to $(k+1)$-convex-monotone solutions. Recently, Bao, Qiang, Tang, and Wang \cite{Bao-2023-CPAA} refined this result, extending it to $k$-convex-monotone solutions.

Inspired by the results in \cite{He-Sheng-Xiang-Zhang-2022-CCM} and \cite{Liu-Ren-2023-JFA}, we extend their work to derive Pogorelov estimates for the parabolic sum Hessian equation. To this end, we first introduce the notion of $(k-1)$-convex-monotone solutions.
\begin{definition}
    A function $u \in C^2\z(\rr^n \times (-\wq,0]\x)$ is called $(k-1)$-convex-monotone if $\lam(D^2 u)\in \gaa_{k-1}$ and $u$ is non-increasing in $t$.
\end{definition}
  
The following theorem is the parabolic version of \cite[Theorem 1.2 (b)]{Liu-Ren-2023-JFA}, and is similar to \cite{Bao-2023-CPAA}.
\begin{theorem}\label{thm main thm: parabolic 1}
    Let $f(x, u,t) \in C^{1,1}(\overline{\mathcal{O}} \times \mathbb{R}\times [\underline{t},0] )$ be a positive function. Assume $u \in C^4(\mathcal{O}) \cap C^{0,1}(\overline{\mathcal{O}})$ is a $(k-1)$-convex-monotone solution to \eqref{main parabolic eq} satisfying $0<m_1\leq -u_t\leq m_2$ and $\alpha > 0$. Then for any $\de>0$ small, there exists a positive constant $C$ such that
    \begin{equation}\label{eq main result: parabolic}
        (-u)^{1+\de} \, \dd u \leq C,
    \end{equation}
    where $C$ depends on $n$, $k$, $m_1$, $m_2$, $\inf f$, $\|f\|_{C^2}$, $\sup |u|$, and $\operatorname{diam}(\mathcal{O}(0))$.
\end{theorem}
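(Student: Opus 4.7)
The plan is to adapt the Pogorelov-type maximum-principle argument underlying Theorem~\ref{thm main thm: elliptic} to the parabolic operator $F[u]:=-u_t\cdot(\si_k+\al\si_{k-1})(D^2u)$, in the spirit of \cite{He-Sheng-Xiang-Zhang-2022-CCM,Bao-2023-CPAA}. The two-sided bound $m_1\leq -u_t\leq m_2$ plays the role of a bounded multiplicative factor that lets the elliptic machinery go through, once one carefully accounts for the extra time-derivative terms produced when differentiating \eqref{main parabolic eq}.

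First I would consider the test function
\begin{equation*}
\Phi(x,t)=(1+\de)\log(-u(x,t))+\log\lam_1(D^2u(x,t))+\tfrac{a}{2}|Du(x,t)|^2,
\end{equation*}
where $\lam_1$ denotes the largest eigenvalue of $D^2u$ and $a>0$ is a constant to be fixed. Since $u=0$ on $\pa\mc{O}$ while $u<0$ in the interior, $\log(-u)\to-\wq$ forces $\Phi$ to attain its maximum at some interior point $(x_0,t_0)$ (the case $\lam_1(x_0,t_0)\leq 1$ is trivial). After rotating so that $D^2u(x_0,t_0)$ is diagonal with $\lam_1\geq\cdots\geq\lam_n$, and smoothing $\lam_1$ by the usual diagonal-perturbation trick, the critical-point conditions $\Phi_i(x_0,t_0)=0$, $\Phi_t(x_0,t_0)\geq 0$, and spatial concavity of $\Phi$ give, with $F^{ij}:=(-u_t)\cdot\frac{\pa(\si_k+\al\si_{k-1})}{\pa u_{ij}}$,
\begin{equation*}
\sum_i F^{ii}\Phi_{ii}-(\si_k+\al\si_{k-1})\Phi_t\leq 0.
\end{equation*}
Substituting into this the identities obtained by differentiating $\log(-u_t)+\log(\si_k+\al\si_{k-1})=\log f$ once and twice in $x_1$ and once in $t$ yields the core Pogorelov inequality relating $\sum_i F^{ii}\lam_{1,ii}/\lam_1$, $\sum_i F^{ii}u_i^2$, $\sum_i F^{ii}\lam_i$, and the usual third-order correction term.

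Following the structural strategy of \cite{Liu-Ren-2023-JFA}, I next fix a small $\ttt>0$ and split according to whether $\lam_k(x_0,t_0)\geq \ttt\lam_1$ or $\lam_k(x_0,t_0)<\ttt\lam_1$. In the \emph{good case}, the coefficients $F^{11},\dots,F^{kk}$ are all uniformly comparable, and direct manipulation using $m_1\leq -u_t\leq m_2$ together with Cauchy--Schwarz on the gradient term absorbs the bad terms into $a|Du|^2$ and produces a uniform upper bound on $\lam_1(-u)^{1+\de}$. In the \emph{bad case} I invoke the parabolic analogue of \cite[Lemma~2.7]{Liu-Ren-2023-JFA} for the sum-Hessian operator (where the multiplier $-u_t\in[m_1,m_2]$ is harmless), which provides lower bounds of the form
\begin{equation*}
F^{11}\geq c_0\sum_i F^{ii}\quad\te{and}\quad \sum_i F^{ii}\lam_i^2\geq c_0\,\lam_1\, f,
\end{equation*}
with $c_0>0$ depending only on the admissible data. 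These bounds let me absorb the third-order correction $\sum F^{ij,pq}u_{ij1}u_{pq1}$ and close the estimate, producing a uniform upper bound on $\Phi(x_0,t_0)$, which upon exponentiating yields \eqref{eq main result: parabolic}.

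The hard part will be the bad-case analysis under the weaker $(k-1)$-convex-monotone hypothesis: unlike the $k$-convex setting where $\si_k$ already dominates, here one has to extract enough positivity from the $\al\si_{k-1}$ term via the mixed-operator lemma, and the required version must be verified with the multiplicative factor $-u_t$ present. A secondary issue is the appearance of $u_{t1}$ and $u_{t11}$ when differentiating the equation in $x_1$; these must be absorbed using the uniform bounds on $-u_t$ and the $C^{1,1}$ regularity of $f$, and the bookkeeping must be done carefully so that the final constant depends only on the quantities listed in the statement.
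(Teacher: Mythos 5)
Your overall skeleton matches the paper's: same type of test function ($\log$ of the largest eigenvalue plus $\bb\log(-u)$ plus a gradient term), maximum principle at an interior point, differentiate the equation once and twice, and use $m_1\le -u_t\le m_2$ to treat the time-derivative terms. But the core of the argument --- the part that actually closes the estimate and produces the exponent $1+\de$ --- is missing, and the dichotomy you propose does not address the real obstruction. The paper does \emph{not} split on $\lam_k\ge\ttt\lam_1$ versus $\lam_k<\ttt\lam_1$; it splits on whether $\lam_n<-\tfrac{\de}{3}\lam_1$. The reason is that the dangerous term is $-\bb\sum_i S_k^{ii}u_i^2/u^2$, which via the critical-point identity $u_{11i}/u_{11}=-\bb u_i/u-\es u_iu_{ii}$ must be dominated by the ``good'' third-order terms. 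When $\lam_n\ge -\tfrac{\de}{3}\lam_1$, the terms $2S_k^{11,jj}u_{11j}^2/u_{11}$ (from the concavity part $S_k^{pp,qq}u_{pq1}^2$) and $2S_k^{11}u_{11j}^2/(u_{11}(u_{11}-u_{jj}))$ (from $D^2\log\lam_{\max}$, handled in the viscosity sense with multiplicity) combine, via $(u_{11}-u_{jj})S_k^{11,jj}=S_k^{jj}-S_k^{11}$, into $\frac{2S_k^{jj}u_{11j}^2}{u_{11}(u_{11}-u_{jj})}\ge\frac{6}{3+\de}\cdot\frac{S_k^{jj}u_{11j}^2}{u_{11}^2}$, and one needs $\frac{6}{3+\de}>1+\frac{1+\de}{\bb}$, which forces the choice $\bb=1+2\de$ (and $\de<1/3$). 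This is exactly where the exponent $1+\de$ comes from; with $\bb=1+\de$ as you wrote, or without this comparison at all, the estimate does not close. In the complementary case $\lam_n<-\tfrac{\de}{3}\lam_1$ the term $\es S_k^{nn}u_{nn}^2\ge\tfrac{\es\de^2}{9}\lam_1^2S_k^{nn}$ swamps everything, and $S_k^{11}u_{11}\ge\ttt S_k$ (Lemma \ref{lem 2.3}(b), valid throughout $\tilde\gaa_k$ with no case distinction) finishes. Your proposed bad-case inequalities are either unnecessary ($\sum_iF^{ii}\lam_i^2\ge c_0\lam_1 f$ already follows from Lemma \ref{lem 2.3}(b) applied to $i=1$, so the $\lam_k$-dichotomy buys nothing here) or are asserted by analogy without proof, and in neither branch do you explain how the diagonal third-order terms $-S_k^{pp,qq}u_{pp1}u_{qq1}$ are handled; in the paper this is done by the quotient-concavity inequality of Lemma \ref{lem 2.3}(e) combined with the once-differentiated equation, which is available under mere $(k-1)$-convexity.

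Two further points of bookkeeping. First, the gradient weight must be taken \emph{small}: the paper needs $\es$ with $1\ge 4\es\|Du\|_{L^\wq}^2$ so that the error $\es^2S_k^{ii}(u_iu_{ii})^2$ produced by squaring the critical-point identity is reabsorbed by the good term $\es S_k^{ii}u_{ii}^2$; your phrasing (``absorbs the bad terms into $a|Du|^2$'') suggests the opposite use and would break the argument if $a$ were taken large. Second, note that no semi-convexity is assumed in this theorem, so the $P_m$-machinery and the iterated $\lam_\mu/\lam_1$ dichotomy of Section \ref{sec 3} (which is what your $\lam_k$-split most resembles) are not the right tools here; the whole point of the $\lam_n$-split is that it works with only $\lam(D^2u)\in\gaa_{k-1}$ and $S_k>0$.
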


When $f$ depends on the gradient, we have the following conclusion:
\begin{theorem}\label{thm main thm: parabolic 2}
    Let $f(x, u,p,t) \in C^{1,1}(\overline{\mathcal{O}} \times \mathbb{R}\times \rr^n \times [\underline{t},0] )$ be a positive function. Assume $u \in C^4(\mathcal{O}) \cap C^{0,1}(\overline{\mathcal{O}})$ is a $(k-1)$-convex-monotone solution to \eqref{main parabolic eq} satisfying $0<m_1\leq -u_t\leq m_2$ and $\alpha > 0$. We also assume $u$ satisfies smei-convexity. Then there exist two positive constant $\ga_0$ and $C$ such that
    \begin{equation}\label{eq main result: parabolic 2}
        (-u)^{\ga_0} \, \dd u \leq C,
    \end{equation}
    where $\ga_0$, $C$ depend on $n$, $k$, $m_1$, $m_2$, $\inf f$, $\|f\|_{C^2}$, $\|u\|_{C^1}$, and $\sup |u|$, and $\operatorname{diam}(\mathcal{O}(0))$.   
\end{theorem}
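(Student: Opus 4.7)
The strategy parallels the proof of Theorem \ref{thm main thm: elliptic}, but now has to accommodate the parabolic structure and the presence of $-u_t$ in the equation. I propose to work with the test function
\begin{equation*}
\Phi(x,t) = \gamma \log(-u) + \log\bigl(\Delta u + N K_0\bigr) + \tfrac{A}{2} |Du|^2,
\end{equation*}
where $K_0$ is the semi-convexity constant from Definition \ref{def sigma_k}(2), $N$ is chosen so that $\Delta u + N K_0 \geq n K_0 > 0$, and $A, \gamma > 0$ are constants to be fixed. Since $u = 0$ and $|Du|$ is bounded on $\partial \mathcal{O}$, while $\log(-u) \to -\infty$ there, $\Phi$ attains its maximum at some interior point $(x_0, t_0) \in \mathcal{O}$ after the usual care near the boundary.

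At $(x_0, t_0)$ I rotate the spatial frame so that $D^2 u$ is diagonal with eigenvalues $\lambda_1 \geq \cdots \geq \lambda_n$, all exceeding $-K_0$ by semi-convexity. Write $G := \sigma_k(\lambda) + \alpha \sigma_{k-1}(\lambda)$ and $G^{ij}$ for its linearization. Differentiating $-u_t G = f(x,u,Du,t)$ twice in space produces a pointwise identity of the form
\begin{equation*}
-u_t G^{ij} u_{ijpp} - 2 u_{tp} G^{ij} u_{ijp} - u_{tpp} G - u_t G^{ij,rs} u_{ijp} u_{rsp} = f_{pp} + f_{p_\beta} u_{\beta p p} + (\text{lower order}),
\end{equation*}
and combining this with the critical-point conditions $\Phi_p = 0$ and $\Phi_t \leq 0$ reduces the problem, after multiplication by $u_{pp}$ and summation, to the standard Liu--Ren-type inequality for the linearized operator applied to $\Phi$. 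Here the time-derivative contribution is controlled thanks to $0 < m_1 \leq -u_t \leq m_2$.

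The key leverage comes from the concavity of $G^{1/k}$ on $\tilde \Gamma_k$ (see \cite[Lemma 2.7]{Liu-Ren-2023-JFA}), which yields
\begin{equation*}
-G^{ij,rs} u_{ijp} u_{rsp} \geq \frac{k-1}{k} \frac{G_p^2}{G} - C \sum_i G^{ii} \frac{u_{iip}^2}{\lambda_i + K_0},
\end{equation*}
where the semi-convexity $\lambda_i > -K_0$ keeps the denominators uniformly positive. This replaces the positivity of eigenvalues (which fails under mere $(k-1)$-convexity) and is the essential reason semi-convexity suffices. The term $\tfrac{A}{2}|Du|^2$ in $\Phi$ contributes $A G^{ii} \lambda_i^2$, which is bounded below using a Newton--MacLaurin inequality applied to the shifted eigenvalues $\tilde\lambda_i := \lambda_i + K_0 > 0$, and dominates the term $f_{p_\beta} u_{\beta pp}$ once $A$ is taken large in terms of $\|f\|_{C^{1,1}}$, $\|u\|_{C^1}$ and $K_0$.

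The main obstacle is the balancing of three competing negative contributions at $(x_0, t_0)$: the concavity "error" that involves dividing by $\lambda_i + K_0$ (which may be small), the gradient-dependence error from $f_{p_\beta}$, and the parabolic cross term $u_{tp} G^{ij} u_{ijp}$. My plan is to split the indices into a "good" set $G := \{i : \lambda_i \geq \varepsilon \lambda_1\}$, where the concavity term dominates directly, and a "bad" set $B := \{i : \lambda_i < \varepsilon \lambda_1\}$, where $G^{ii}$ is comparable to $\sigma_{k-1}(\lambda | i)$ (bounded by $(k-1)$-convexity and semi-convexity) and the bad $u_{iip}^2$ terms are absorbed into $A G^{ii} \lambda_i^2$ by Cauchy--Schwarz. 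Choosing $\gamma$, then $A$, then $\varepsilon$ in the correct order produces $\lambda_1(x_0, t_0) \bigl(-u(x_0, t_0)\bigr)^{\gamma_0} \leq C$ with $\gamma_0$ determined by the ratios of the chosen constants; since $\Delta u \leq n \lambda_1$ and $\Phi$ is maximized at $(x_0, t_0)$, this yields \eqref{eq main result: parabolic 2} globally.
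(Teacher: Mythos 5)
There are genuine gaps here, and they sit exactly where the paper's actual work lies. First, your ``key leverage'' inequality is not what concavity of $G^{1/k}$ gives: concavity yields $-G^{ij,rs}u_{ijp}u_{rsp}\geq -\tfrac{k-1}{k}\tfrac{G_p^2}{G}$, i.e.\ a lower bound by a \emph{negative} multiple of $G_p^2/G$. Obtaining a positive multiple of $(S_k)_p^2/S_k$ requires the quotient-type inequality of Lemma \ref{lem 2.3}~(e), and the error that inequality leaves is $-\tfrac{S_k}{S_l}S_l^{pp,qq}u_{ppj}u_{qqj}$, not $-C\sum_i G^{ii}u_{iip}^2/(\lambda_i+K_0)$. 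Controlling that error under mere $(k-1)$-convexity plus semi-convexity is precisely the content of the iteration over $\mu=1,\dots,k-1$ in Lemma \ref{lem A_1+... geq ...} (via $|S_\mu^{pp}S_\mu^{qq}-S_\mu S_\mu^{pp,qq}|\leq\varepsilon S_\mu^{pp}S_\mu^{qq}$ for $p,q\leq\mu$ and the eigenvalue structure of Lemma \ref{lem 2.5}), none of which appears in your sketch. Second, the absorption plan cannot close as stated: the bad terms $G^{ii}u_{iip}^2/(\lambda_i+K_0)$ are squares of \emph{third} derivatives, while $A\,G^{ii}\lambda_i^2$ is a second-order quantity, and Cauchy--Schwarz provides no trade between them. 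In the paper the negative third-order squares are absorbed by the positive third-order squares generated by $\log P_m$ with $m$ large (the interplay of $B_i$, $C_i$, $D_i$ against $E_i$ in Lemmas \ref{lem A_i+... geq 0}--\ref{lem A_1+... geq ...}); replacing $\log P_m$ by $\log(\Delta u+NK_0)$ removes this mechanism, since the resulting negative term $-(\sum_j u_{jji})^2/(\Delta u+NK_0)^2$ is no longer dominated by what remains.

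Two further points. Your test function omits $\tfrac{B}{2}|x|^2$; the paper needs the resulting term $B\sum_i S_k^{ii}$, together with $mB\gg A^2$ in \eqref{constants condition 1} and $|\lambda_k|\leq c_0K_0$ from Lemma \ref{lem 2.5}, to absorb the possibly negative quantity $(S_k-\kappa_1S_k^{11})\kappa_1^{2m-3}u_{111}^2$ in Lemma \ref{lem 3.4}, and nothing in your setup replaces it. Finally, at a maximum over $\mathcal{O}$ (whose parabolic boundary excludes the top $t=0$) one has $\Phi_t\geq 0$, not $\Phi_t\leq 0$; the paper uses $0\leq\phi_t$ in \eqref{eq time derivative of test function: parabolic 2} to bound the term coming from $u_{tjj}$ from below, and with your sign that inequality runs the wrong way.
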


As in the elliptic case, by applying a similar method as in Theorem \ref{thm main thm: parabolic 2}, we derive the rigidity theorem for  $(k-1)$-convex solution of the parabolic sum Hessian equation:
\begin{equation}\label{main eq 2}
    -u_t \cdot\z(\sigma_{k}(D^2u)+\al \sigma_{k-1}(D^2u)\x)  =1,  \quad \text {in } \rr^{n}\times(-\infty,0],
\end{equation}   
\begin{theorem}\label{thm rigidity theorem 2}
    Let $u \in C^4(\mathcal{O}) \cap C^{0,1}(\overline{\mathcal{O}})$ be a $(k-1)$-convex and semi-convex solution to \eqref{main eq 2} with $\alpha > 0$. We assume that $u(x,0)$ satisfies a quadratic growth condition and $0<m_1\leq -u_t\leq m_2$. Then $u$ has the form 
    $$
    u(x,t)=-mt+p(x),
    $$ 
    where $m>0$ is a constant and $p(x)$ is a quadratic polynomial.
\end{theorem}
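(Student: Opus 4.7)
The plan is to reduce Theorem \ref{thm rigidity theorem 2} to the elliptic rigidity result, Theorem \ref{thm rigidity theorem 1}, by first showing that $-u_t$ must be a positive constant. Set $v := -u_t$, so $v \in [m_1, m_2]$, and rewrite \eqref{main eq 2} as
\[
G(D^2 u) := \sigma_k(D^2 u) + \alpha \sigma_{k-1}(D^2 u) = \frac{1}{v}.
\]
Differentiating in $t$ and using $u_{ijt} = -v_{ij}$ yields the linear parabolic equation
\[
v_t = v^2 G^{ij}(D^2 u)\, v_{ij},
\]
whose coefficients $G^{ij}$ are positive definite on $\tilde\Gamma_{k-1}$ by Liu--Ren. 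Once $D^2 u$ is known to be globally bounded and uniformly admissible on $\rr^n \times (-\infty, 0]$, this is a uniformly parabolic linear equation with bounded measurable coefficients; the Krylov--Safonov H\"older estimate applied on arbitrarily large parabolic cylinders then forces the bounded ancient solution $v$ to be constant, $v \equiv m \in [m_1, m_2]$.

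To produce the required global $C^2$ bound I would apply the parabolic Pogorelov estimate (Theorem \ref{thm main thm: parabolic 2}) on sub-level sets. From the quadratic growth of $u(\cdot, 0)$ together with the monotonicity $u_t \le -m_1 < 0$, one has $u(x, t) \ge a|x|^2 + m_1 |t| - b$, so for large $R$ the set $\mathcal{O}_R := \{(x,t) : t \le 0,\ u(x,t) < R\}$ is bounded with $\mathrm{diam}(\mathcal{O}_R(0)) = O(\sqrt R)$. Since the right-hand side in \eqref{main eq 2} does not depend on $u$, the shifted function $u - R$ vanishes on the parabolic boundary of $\mathcal{O}_R$ and still solves the equation; Theorem \ref{thm main thm: parabolic 2} thus gives
\[
(R - u(x, t))^{\gamma_0}\, \Delta u(x, t) \le C(R) \quad \text{on } \mathcal{O}_R,
\]
where $C(R)$ depends polynomially on $\mathrm{diam}(\mathcal{O}_R(0))$ and $\|u\|_{C^1(\mathcal{O}_R)}$. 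Fixing any interior $(x_0, t_0)$ and letting $R \to \infty$, combined with the semi-convexity assumption (which controls the negative eigenvalues), yields the required uniform two-sided bounds on $D^2 u$.

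Once $v \equiv m$ is established, integrating $u_t = -m$ gives $u(x, t) = p(x) - mt$ with $p(x) := u(x, 0)$ solving
\[
\sigma_k(D^2 p) + \alpha \sigma_{k-1}(D^2 p) = \frac{1}{m} \quad \text{in } \rr^n.
\]
The function $p$ inherits $(k-1)$-convexity, semi-convexity, and the quadratic growth assumed on $u(\cdot, 0)$. The coordinate rescaling $\tilde p(y) := p(m^{1/(2k)} y)$ transforms the equation into $\sigma_k(D^2 \tilde p) + (m^{1/k} \alpha)\, \sigma_{k-1}(D^2 \tilde p) = 1$, with positive parameter $\tilde\alpha = m^{1/k}\alpha$, while preserving the convexity and quadratic growth conditions (with adjusted constants). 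Theorem \ref{thm rigidity theorem 1} then forces $\tilde p$, and hence $p$, to be a quadratic polynomial, yielding $u(x, t) = -m t + p(x)$ as claimed.

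The step I expect to be the main obstacle is the passage from the Pogorelov estimate on sub-level sets to a genuinely global $C^2$ bound: this requires a careful quantification of how the constant in Theorem \ref{thm main thm: parabolic 2} depends on $\mathrm{diam}(\mathcal{O}_R(0)) \sim \sqrt R$ and on $\|u\|_{C^1(\mathcal{O}_R)}$, and a verification that the resulting growth is dominated by $R^{\gamma_0}$. The parabolic Liouville argument for $v$ and the reduction to the elliptic rigidity theorem are then routine consequences of uniform parabolicity and scaling, respectively.
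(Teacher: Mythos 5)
Your overall strategy (global $C^2$ bound $\Rightarrow$ Krylov--Safonov Liouville for $v=-u_t$ $\Rightarrow$ reduction to the elliptic Theorem \ref{thm rigidity theorem 1}) is different from the paper's, and the last two stages are sound in principle: the linearization $v_t=v^2G^{ij}v_{ij}$ is correct, uniform parabolicity does follow once $D^2u$ is bounded (since $S_k=1/v\geq 1/m_2$ and Lemma \ref{lem 2.5}(a) keep $\lambda(D^2u)$ in a compact subset of $\tilde\Gamma_k$), and the rescaling that reduces $S_k(D^2p)=1/m$ to the normalized equation is computed correctly. However, the step you yourself flag as the main obstacle is a genuine gap, not a technicality. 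Applying Theorem \ref{thm main thm: parabolic 2} directly to $u-R$ on $\mathcal{O}_R=\{u<R\}$ gives a constant $C(R)$ whose dependence on $\operatorname{diam}(\mathcal{O}_R(0))\sim\sqrt{R}$, on $\sup|u|\sim R$, and on $\|Du\|_{L^\infty}\sim\sqrt{R}$ is never tracked in the proof of that theorem (the parameters $L,M,m,A,B$ are chosen by the hierarchy \eqref{constants condition 1} in a way that is not quantitatively explicit in these data), so the claim that $C(R)$ is ``polynomial'' and dominated by $R^{\gamma_0}$ has no support. At a fixed interior point $(R-u)^{\gamma_0}\sim R^{\gamma_0}$, so everything hinges on exactly this unverified comparison; as written, no global $C^2$ bound is obtained and the rest of the argument cannot start.

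The paper closes this gap by a parabolic rescaling rather than by tracking constants: it sets $v(y,t)=\bigl(u(Ry,R^2t)-R^2\bigr)/R^2$ on $\mathcal{O}_R=\{(y,t):u(Ry,R^2t)\leq R^2\}$, which still solves $-v_t\cdot S_k(D^2v)=1$, and by the quadratic growth (together with $-u_t\geq m_1$) the rescaled domain and $\sup|v|$ are bounded independently of $R$. It then invokes Lemma \ref{lem 5.2} --- a version of the Pogorelov estimate for right-hand side $f(x)$ only, proved precisely to remove the $\|Du\|$-dependence --- so the constant in $(-v)^{\gamma_0}\Delta v\leq C$ is independent of $R$, and restricting to $\{v\leq -1/2\}$ gives $\Delta u\leq C_0$ globally. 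From there the paper does not need your Liouville step at all: parabolic Evans--Krylov on the rescaled solution yields $\|D^2u\|_{C^{0,\beta}}\leq CR^{-\beta}\to 0$, so $D^2u$ is a constant matrix, and the equation then forces $u_t\equiv -m$ and $u=-mt+p(x)$ directly. If you replace your sub-level-set argument with this rescaling, your Krylov--Safonov route becomes a valid (if longer) alternative; without it, the proof does not go through.
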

Finally, using the idea of  Warren \cite{Warren}, we present an example of a non-polynomial entire $(k-1)$-convex solution, which does not satisfy the assumptions of Theorem\ref{thm rigidity theorem 1}:

\begin{example} When $n=3$, the $1$-convex function
$$
u(x,y,t)=\frac{e^{4t}-1}{4}(x^2+y^2)+\frac{1}{16}(\frac{7e^{-4t}}{4}-\frac{e^{4t}}{4}-4t^2)
$$
solves the equation
$$
\sigma_2(D^2u)+\sigma_1(D^2u)=1.
$$
\end{example}

This paper is organized as follows: In section \ref{sec 2}, we present some basic lemmas. In section \ref{sec 3}, we prove Theorem \ref{thm main thm: elliptic}. In section \ref{sec 4}, we prove Theorem \ref{thm main thm: parabolic 1} and \ref{thm main thm: parabolic 2}. Finally, in Section \ref{sec 5}, we prove two rigidity results, namely, Theorems \ref{thm rigidity theorem 1} and \ref{thm rigidity theorem 2}. Throughout this paper, we always assume $\lam_1$ to be large enough.

\section{Preliminaries}\label{sec 2}
\setcounter{equation}{0}

In this section, we give the preliminary knowledge related to our theorems and their proofs. For convenient, we denote some notations as same as \cite{Liu-Ren-2023-JFA}:
\begin{notation}
    Let $\lam=(\lam_1,\cdots,\lam_n)\in \rr^n$. For any $1\leq p,q \leq n$, we define
    \begin{enumerate}
        \item[$(1)$] $S_k(\lam):=\si_k(\lam)+\al \si_{k-1}(\lam)$, for $\al >0$,
        \item[$(2)$] $S_k^{pp}(\lam):=\frac{\pa S_k(\lam)}{\pa \lam_p}=\si_{k-1}(\lam|p)+\al\si_{k-2}(\lam|p)=S_{k-1}(\lam|p)$,
        \item[$(3)$] $S_k^{pp,qq}(\lam):=\frac{\pa S_k(\lam)}{\pa \lam_p \pa \lam_q}=S_{k-2}(\lam|pq)$, and $S_k^{pp,pp}(\lam)=0$,
        \item[$(4)$] $S_k(\lam)=\lam_p S_{k-1}(\lam|p)+S_k(\lam|p)$,
        \item[$(5)$] $\sum\limits_{p=1}^n S_k(\lam|p)=(n-k)S_{k}(\lam)+\al\si_{k-1}(\lam)$, 
        \item[$(6)$] $\sum\limits_{p=1}^{n}\lam_p S_{k-1}(\lam|p)=kS_k(\lam)-\al\si_{k-1}(\lam)$.
    \end{enumerate}
\end{notation}

\begin{lemma}[See \cite{Ball}]\label{lem 2.2}
    Denote by $Sym(n)$ the set of all $n\times n$ symmetric matrices.
    Let $F$ be a $C^2$ symmetric function defined in some open subset
    $\Psi \subset Sym(n)$. At any diagonal matrix $A\in\Psi$ with distinct eigenvalues, let $\ddot{F}(B,B)$ be the second derivative of $C^2$ symmetric function $F$ in direction $B \in Sym(n)$, then
    \begin{align*}
        \ddot{F}(B,B) =  \sum_{j,k=1}^n {\ddot{f}}^{jk}
        B_{jj}B_{kk} + 2 \sum_{j < k} \frac{\dot{f}^j -
        \dot{f}^k}{{\kappa}_j - {\kappa}_k} B_{jk}^2.
    \end{align*}
\end{lemma}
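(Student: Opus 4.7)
The plan is to reduce the formula to a direct second-order perturbation computation for the eigenvalues of a one-parameter family of symmetric matrices. Since $F$ is symmetric, we may write $F(A) = f(\kappa_1(A), \dots, \kappa_n(A))$ for the associated symmetric function $f : \mathbb{R}^n \to \mathbb{R}$. Fixing a diagonal $A \in \Psi$ with distinct eigenvalues $\kappa_1, \dots, \kappa_n$, I would consider the family $A(t) = A + tB$ and observe that $\ddot{F}(B,B) = \tfrac{d^2}{dt^2}\big|_{t=0} F(A(t))$, reducing the claim to computing this second derivative.

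The key ingredient is the classical first- and second-order eigenvalue perturbation formulas, valid precisely because the eigenvalues of $A$ are assumed distinct: each $\kappa_j(t)$ is $C^2$ near $t=0$, with
\begin{equation*}
\kappa_j'(0) = B_{jj}, \qquad \kappa_j''(0) = 2\sum_{k \neq j}\frac{B_{jk}^2}{\kappa_j - \kappa_k}.
\end{equation*}
Both are obtained by expanding the eigenvector as $v_j(t) = e_j + t w_j + t^2 u_j + O(t^3)$, substituting into $(A+tB)v_j(t) = \kappa_j(t)v_j(t)$, and matching the coefficients of $t$ and $t^2$ after taking inner products with $e_j$ and $e_k$; the off-diagonal components $(w_j)_k = B_{jk}/(\kappa_j - \kappa_k)$ feed directly into the expression for $\kappa_j''(0)$.

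Next, applying the chain rule to $F(A(t)) = f(\kappa_1(t), \dots, \kappa_n(t))$ yields
\begin{equation*}
\ddot{F}(B,B) = \sum_{j,k=1}^n \ddot{f}^{jk} B_{jj} B_{kk} + 2\sum_{j=1}^n \dot{f}^j \sum_{k \neq j} \frac{B_{jk}^2}{\kappa_j - \kappa_k}.
\end{equation*}
To recover the stated form, I would symmetrize the second sum: using $B_{jk}=B_{kj}$ and pairing the contributions of $(j,k)$ and $(k,j)$ for $j \neq k$, one obtains
\begin{equation*}
2\sum_{j \neq k} \frac{\dot{f}^j B_{jk}^2}{\kappa_j - \kappa_k} = 2\sum_{j<k} \frac{(\dot{f}^j - \dot{f}^k)\,B_{jk}^2}{\kappa_j - \kappa_k},
\end{equation*}
which finishes the derivation.

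The only delicate point is justifying the second-order perturbation formula for $\kappa_j''(0)$; this is where the distinct-eigenvalue hypothesis is genuinely used, both to guarantee $C^2$-dependence of the eigenvalues on $t$ and to ensure the denominators $\kappa_j - \kappa_k$ do not vanish. Once this ingredient is established, the remainder of the argument is routine chain-rule bookkeeping together with an index swap, so no substantial obstacle is expected.
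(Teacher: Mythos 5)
Your proposal is correct and is the standard derivation: the paper itself offers no proof of Lemma \ref{lem 2.2}, citing only \cite{Ball}, and the argument there (and in the usual treatments of this formula) is exactly the perturbation-theoretic computation you give — expand the eigenvalues of $A+tB$ to second order using the distinct-eigenvalue hypothesis, apply the chain rule to $f(\kappa_1(t),\dots,\kappa_n(t))$, and symmetrize the off-diagonal sum. All the individual steps (the formulas for $\kappa_j'(0)$ and $\kappa_j''(0)$, and the index pairing yielding $(\dot f^j-\dot f^k)/(\kappa_j-\kappa_k)$) check out.
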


\begin{lemma}[See \cite{LRW1,Liu-Ren-2023-JFA}]\label{lem 2.3}
    Assume that $\lambda=(\lambda_1,\cdots,\lambda_n)\in\tilde\Gamma_k$, $1\leq k\leq n$, $\lambda_1\geq\cdots\geq\lambda_n$, then
    \par
    $(a)$ The set $\tilde{\gaa}_{k}$ is a convex cone. In $\tilde{\gaa}_k$, equation \eqref{main eq} with $1\leq k\leq n $ and $\al >0$  is elliptic.
    \par
    $(b)$ For any $1\leq j\leq k-1$, there exists a positive constant $\theta$ depending on $n,k$, such that
    \begin{equation}\label{eq 2.2}
        S_k^{jj}(\lam)\geq \frac{\theta S_k(\lam)}{\lam_j}.
    \end{equation}
    \par 
    $(c)$ $S_k^2(\lam)-S_{k-1}(\lam)S_{k+1}(\lam)\geq 0$.
    \par
    $(d)$ $S_k^{\frac{1}{k}}(\lam)$ and $\z[\frac{S_k}{S_l}\x]^{\frac{1}{k-l}}(\lam)$ is concave in $\tilde\gaa_k$ for $1\leq l < k\leq n$.
    \par
    $(e)$ Let $k>l$, $\tau=\frac{1}{k-l}$, then 
    \begin{align*}
        -&\frac{S_k^{pp,qq}}{S_k}u_{ppj}u_{qqj}+\frac{S_l^{pp,qq}}{S_l}u_{ppj}u_{qqj}\\
        &\geq \z(\frac{(S_k)_j}{S_k}-\frac{(S_l)_j}{S_l}\x)\z(
        (\tau-1)\frac{(S_k)_j}{S_k}-(\tau+1)\frac{(S_l)_j}{S_l}\x).\nonumber
    \end{align*}
    Furthermore, for any $\delta>0$, we have
    \begin{align}\label{2.2}
        -&S_k^{pp,qq}u_{ppj}u_{qqj} +\z(1-\tau+\frac{\tau}{\de}\x)\frac{(S_k)_j^2}{S_k}\\
        &\geq S_k(1+\tau-\de\tau) \z[\frac{(S_l)_j}{S_l}\x]^2-\frac{S_k}{S_l} S_l^{pp,qq} u_{ppj}u_{qqj}.\nonumber
    \end{align}
\end{lemma}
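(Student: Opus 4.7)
The plan is to prove the six items one by one, exploiting that $S_k = \si_k + \al\si_{k-1}$ is a nonnegative linear combination of two Garding polynomials, so that most claims reduce to standard Newton--Maclaurin and Garding-type inequalities for the $\si_j$'s.

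For (a), I would first observe that $\gaa_{k-1}$ is convex (the classical Garding cone), and that on $\gaa_{k-1}$ we have $\si_{k-1}>0$, so the second defining inequality rewrites as $\si_k/\si_{k-1}+\al>0$. Since $\si_k/\si_{k-1}$ is concave on $\gaa_{k-1}$, its superlevel sets are convex and therefore $\tilde{\gaa}_k$ is convex. For ellipticity, write $S_k^{pp} = \si_{k-1}(\lam|p)+\al\si_{k-2}(\lam|p)$; both summands are nonnegative on $\gaa_{k-1}\subset\gaa_{k-2}$, and $\al>0$ forces strict positivity. For (b), one uses the identity $\lam_j S_k^{jj} = S_k - (\si_k(\lam|j)+\al\si_{k-1}(\lam|j))$, which reduces the claim to showing $\si_k(\lam|j)+\al\si_{k-1}(\lam|j) \leq (1-\theta)S_k$ for $j\leq k-1$; with the ordering $\lam_1\geq\cdots\geq\lam_n$, the fact that the first $k-1$ eigenvalues are positive in $\gaa_{k-1}$ gives this bound with a constant depending only on $n$ and $k$, a standard ``large index'' Garding-type estimate.

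Item (c) follows by expanding
\[
S_k^2 - S_{k-1}S_{k+1} = \si_k^2 - \si_{k-1}\si_{k+1} + \al\,(\si_{k-1}\si_k - \si_{k-2}\si_{k+1}) + \al^2(\si_{k-1}^2 - \si_{k-2}\si_k),
\]
each bracket being nonnegative on $\gaa_{k-1}$ by Newton--Maclaurin. For (d), I would invoke that $S_k$ is hyperbolic in the Garding sense with hyperbolicity cone $\tilde{\gaa}_k$, so $S_k^{1/k}$ is concave there; the concavity of $(S_k/S_l)^{1/(k-l)}$ then follows by the standard Marcus--Lopes trick applied to the quotient of two hyperbolic polynomials.

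The main obstacle is (e), the quantitative second-derivative inequality used later in the $C^2$ estimate. Its unquantitative form is obtained by twice differentiating the concavity of $(S_k/S_l)^{1/(k-l)}$ at a diagonal matrix via Lemma \ref{lem 2.2} and collecting terms, yielding
\[
-\frac{S_k^{pp,qq}}{S_k}u_{ppj}u_{qqj} + \frac{S_l^{pp,qq}}{S_l}u_{ppj}u_{qqj} \geq \Bigl(\frac{(S_k)_j}{S_k} - \frac{(S_l)_j}{S_l}\Bigr)\Bigl((\tau-1)\frac{(S_k)_j}{S_k} - (\tau+1)\frac{(S_l)_j}{S_l}\Bigr).
\]
The quantitative form \eqref{2.2} is then extracted by applying Cauchy's inequality with a free parameter $\de$ to split the cross-term $(S_k)_j(S_l)_j/(S_k S_l)$ and absorbing one piece into $(S_k)_j^2/S_k$. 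The delicate point will be tracking the exact dependence on $\tau$ and $\de$ so the output arrives in the precise shape used in the subsequent Pogorelov argument; the remaining steps are routine algebra.
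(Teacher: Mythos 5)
The paper itself gives no proof of Lemma \ref{lem 2.3}: it is imported from \cite{LRW1} and \cite{Liu-Ren-2023-JFA} (with part (c) credited to \cite{Ren-2024-IMRN} in the remark that follows), so your sketch can only be judged on its own merits. Your treatment of (e) is correct and is the standard derivation: restrict the concavity from (d) to diagonal perturbations via Lemma \ref{lem 2.2}, set $a=(S_k)_j/S_k$ and $b=(S_l)_j/S_l$ so that the right-hand side becomes $(\tau-1)a^2-2\tau ab+(\tau+1)b^2$, and split the cross term by Cauchy's inequality with parameter $\de$; this reproduces \eqref{2.2} exactly. The reduction in (b) to $S_k(\lam|j)\le(1-\ttt)S_k$ for $j\le k-1$ is also the right first step, though that estimate is the entire content of the claim and you do not supply it.

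The genuine gaps are in (a), (c) and (d), and they all trace back to the absence of the one device that makes this lemma work in \cite{LRW1}: the identity $S_j(\lam)=\si_j(\lam_1,\dots,\lam_n,\al)$, which realizes $S_k$ as a genuine elementary symmetric polynomial in $n+1$ variables and $\tilde\Gamma_k$ as the slice $\{\lam:(\lam,\al)\in\Gamma_k^{(n+1)}\}$. Concretely: (i) your ellipticity argument asserts $\si_{k-1}(\lam|p)\ge 0$ on $\Gamma_{k-1}$, but this quantity is the derivative of $\si_k$ and is only signed on $\Gamma_k$; for $n=3$, $k=2$, $\lam=(10,-4,-4)\in\Gamma_1$ one has $\si_1(\lam|1)=-8<0$, and positivity of $S_2^{11}=\si_1(\lam|1)+\al$ genuinely requires the constraint $S_2>0$, not just $\lam\in\Gamma_1$. (ii) In (c), the middle bracket $\si_k\si_{k-1}-\si_{k-2}\si_{k+1}\ge 0$ is not a classical Newton--Maclaurin inequality: on $\Gamma_{k-1}$ both $\si_k$ and $\si_{k+1}$ may be negative, and multiplying the two classical Newton inequalities only yields the claim when $\si_k\si_{k-1}>0$. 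That bracket is precisely the generalized Newton--Maclaurin inequality of \cite{Ren-2024-IMRN}; the elementary proof of (c) is instead $S_k^2-S_{k-1}S_{k+1}=\si_k(\mu)^2-\si_{k-1}(\mu)\si_{k+1}(\mu)\ge 0$ with $\mu=(\lam,\al)\in\rr^{n+1}$, by Newton in $n+1$ variables. (iii) In (d), $S_k=\si_k+\al\si_{k-1}$ is not homogeneous in $\lam$, hence not a hyperbolic polynomial, so Garding's concavity theorem cannot be applied to it directly; concavity of $S_k^{1/k}$ and of $(S_k/S_l)^{1/(k-l)}$ on $\tilde\Gamma_k$ follows by restricting the corresponding concave functions of $\mu\in\Gamma_k^{(n+1)}$ to the affine slice $\mu_{n+1}=\al$. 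Your convexity argument for $\tilde\Gamma_k$ via concavity of $\si_k/\si_{k-1}$ on $\Gamma_{k-1}$ is likewise essentially an instance of (d) on a larger cone and risks circularity, whereas the slice description gives convexity of $\tilde\Gamma_k$ for free.
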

\begin{remark}
    In fact, conclusion $(c)$ was proved by Ren in \cite{Ren-2024-IMRN}, and does not require the condition $\lam\in \tilde{\gaa}_k$.
\end{remark}

\begin{lemma}\label{lem 2.5}
    Let $\lam$ be as defined in Lemma $\ref{lem 2.3}$. Assume that $N_0\leq S_k\leq N_1$ and $\lam_n\geq -K_0$ for positive constants $N_0$, $N_1$, $K_0$. Then, for sufficiently large $\lambda_{1}$, there exists a positive constant $c_0>0$ such that
    \par
    $(a)$ $\si_{k-1}(\lam)\geq c_0(n,k,N_0)$.
    \par
    $(b)$ $|\lam_k| \leq c_0(n,k,N_1)K_0$.
    \par
    $(c)$ $\sum\limits_{i} S_k^{ii}\geq c_0(n,k,N_0) \si_1^{\frac{1}{k-2}}$.
\end{lemma}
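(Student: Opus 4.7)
The plan is to prove the three parts in sequence, with part (a) feeding into both (b) and (c).

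For (a), I would split on the magnitude of $\si_k(\lam)$. If $\si_k(\lam) \geq N_0/2$, then because $\lam\in\gaa_{k-1}$ already gives $\si_j(\lam) > 0$ for all $j \leq k-1$, adjoining $\si_k > 0$ puts $\lam\in\gaa_k$; Maclaurin's inequality then yields $\si_{k-1}(\lam)\geq C(n,k)\,\si_k(\lam)^{(k-1)/k}\geq C(n,k)(N_0/2)^{(k-1)/k}$. Otherwise $\si_k(\lam)<N_0/2$, and then $\al\,\si_{k-1}=S_k-\si_k\geq N_0/2$ directly gives $\si_{k-1}\geq N_0/(2\al)$. Either way $\si_{k-1}\geq c_0(n,k,N_0)$.

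For (b), the bound $\lam_k\geq -K_0$ is immediate from $\lam_k\geq\lam_n\geq-K_0$. For the upper bound I would argue by contradiction: assume $\lam_k>AK_0$ for a large constant $A$, so that $\lam_1,\ldots,\lam_k>AK_0$. The identity $\si_k(\lam)=\lam_1\si_{k-1}(\lam|1)+\si_k(\lam|1)$ combined with $\si_k(\lam)\leq S_k\leq N_1$ forces $\si_{k-1}(\lam|1)=O(1/\lam_1)$ once $\lam_1$ is sufficiently large. The companion identity $\si_{k-1}(\lam)=\si_{k-1}(\lam|1)+\lam_1\si_{k-2}(\lam|1)$, together with the lower bound $\si_{k-1}(\lam)\geq c_0$ from (a), therefore pins $\si_{k-2}(\lam|1)$ near $c_0/\lam_1$; iterating this reasoning through $\lam_2,\ldots,\lam_{k-1}$ and invoking the tail control $\lam_n\geq-K_0$ yields $|\lam_k|\leq c_0(n,k,N_1)K_0$.

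For (c), the identities in the notation block give
\[
    \sum_i S_k^{ii}(\lam)=(n-k+1)\si_{k-1}(\lam)+\al(n-k+2)\si_{k-2}(\lam),
\]
so it suffices to prove $\si_{k-2}(\lam)\geq c\,\si_1(\lam)^{1/(k-2)}$. Here I would invoke the normalized Newton inequality $p_j(\lam)^2\geq p_{j-1}(\lam)p_{j+1}(\lam)$ for $p_j=\si_j/\binom{n}{j}$, valid for all real $\lam$; combined with the log-concavity of $\log\binom{n}{j}$ in $j$, this expresses the log-concavity of $\si_j$ in the index $j$. Applied between the endpoints $j=1$ and $j=k-1$, concavity of $\log\si_j$ yields
\[
    \log\si_{k-2}(\lam)\geq\frac{1}{k-2}\log\si_1(\lam)+\frac{k-3}{k-2}\log\si_{k-1}(\lam),
\]
and exponentiating together with $\si_{k-1}\geq c_0$ from (a) gives $\si_{k-2}(\lam)\geq c_0^{(k-3)/(k-2)}\,\si_1(\lam)^{1/(k-2)}$, from which the desired estimate is immediate.

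The main obstacle is (b): a naive expansion of $\si_k(\lam)$ around the leading product $\lam_1\cdots\lam_k$ produces correction terms comparable to the main term when $\lam_1\gg\lam_k$, so the "for sufficiently large $\lam_1$" hypothesis has to be exercised carefully through the recursive decompositions $\si_k(\lam)=\lam_1\si_{k-1}(\lam|1)+\si_k(\lam|1)$ and $\si_{k-1}(\lam)=\si_{k-1}(\lam|1)+\lam_1\si_{k-2}(\lam|1)$ in tandem with the lower bound from (a). The interplay of these two asymptotic relations is what closes the argument.
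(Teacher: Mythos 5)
Parts (a) and (c) of your proposal are essentially the paper's own argument: (a) is the same case split on the sign (here, size) of $\si_k$ followed by Newton--Maclaurin in the first case and $\al\si_{k-1}\geq N_0/2$ in the second, and (c) is exactly the paper's reduction to $\si_{k-2}\geq C\,\si_1^{1/(k-2)}\si_{k-1}^{(k-3)/(k-2)}$ via the log-concavity of $j\mapsto\si_j$ on $1\leq j\leq k-1$ (where positivity of the $\si_j$ is guaranteed by $\lam\in\gaa_{k-1}$), combined with the lower bound from (a). Both are fine.

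For (b), your overall strategy (contradiction: if $\lam_k\gg K_0$ the product $\lam_1\cdots\lam_k$ forces $S_k$ to blow up) is the paper's, but the intermediate step you propose does not hold as stated. From $\lam_1\si_{k-1}(\lam|1)+\si_k(\lam|1)=\si_k(\lam)\leq N_1$ you cannot conclude $\si_{k-1}(\lam|1)=O(1/\lam_1)$, because $\si_k(\lam|1)$ is not bounded below by a constant: under the contradiction hypothesis it can be as negative as $-CK_0\,\lam_2\cdots\lam_k$, which is itself large, so the ``error'' term is comparable to the main term and the claimed pinning of $\si_{k-1}(\lam|1)$ and then $\si_{k-2}(\lam|1)$ breaks down. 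The iteration through $\lam_2,\dots,\lam_{k-1}$ as described therefore does not close. The repair is the paper's one-line estimate: with all of $\lam_1,\dots,\lam_k\gg K_0$ and $\lam_i\geq -K_0$ for the remaining indices, one bounds directly
$$
N_1\;\geq\; S_k\;\geq\;\si_k(\lam)\;\geq\;\lam_1\cdots\lam_{k-1}\bigl[\lam_k-C(n,k)K_0\bigr]\;\geq\;\tfrac12\,\lam_1\cdots\lam_k\;\gg\;1,
$$
a contradiction; no bookkeeping of the quantities $\si_j(\lam|1)$ is needed. I would also note that (c) implicitly requires $k\geq 3$ (for $k=2$ the exponent $1/(k-2)$ is undefined), a restriction neither you nor the paper makes explicit.
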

\begin{proof}
    (a) If $\si_k>0$, by the Newton-MacLaurin inequality, we have
    $$      
    \si_{k-1}\geq c_1 \si_{k}^{\frac{k-1}{k}}, 
    $$
    which implies 
    $$
    N_0\leq S_k \leq \z(\frac{\si_{k-1}}{c_1}\x)^{\frac{k}{k-1}}+\al \si_{k-1}\leq C(n,k) \max\z\{\si_{k-1}^{\frac{k}{k-1}},\si_{k-1}\x\}.
    $$

    If $\si_k\leq 0$, then $N_0\leq S_k \leq \al \si_{k-1}$. Hence, in both cases, $\si_{k-1}\geq c_0$ for some positive constant $c_0$.
    \par
    (b) Assume the contrary, that $\lam_k \gg K_0$. Then
    $$
    N_1 \geq S_k=\si_k+\al \si_{k-1}\geq \lam_1 \cdots \lam_{k-1}\z[\lam_k-C(n,k)K_0\x] \gg 1.
    $$
    This leads to a contradiction. Therefore, $\lam_k\leq c_0 K_0$ for some constant $c_0>0$.
    \par
    (c) From the definition of $S_k^{ii}$, we have 
    $$
    \sum_i S_k^{ii}=(n-k+1)\si_{k-1}+\al (n-k+2)\si_{k-2}.
    $$
    By (a) and the Newton-MacLaurin inequality, 
    $$
    \sum_i S_k^{ii}\geq \al(n-k+2) \si_{k-2}\geq   C \si_1^{\frac{1}{k-2}} \si_{k-1}^{\frac{k-3}{k-2}} \geq  c_0 \si_1^{\frac{1}{k-2}}.
    $$
\end{proof}

\section{Pogorelov estimates for $(k-1)$-convex solutions}\label{sec 3}
\setcounter{equation}{0}

In this section, we establish Pogorelov-type $C^2$ estimates for $(k-1)$-convex and semi-convex solutions to the Dirichlet problem \eqref{main eq}.  

Let $S_{k}=\sigma_{k}+\alpha \sigma_{k-1}$, where $\al>0$. We now consider the corresponding test function:
$$
    \phi=M \log (-u)+\log P_{m}+\frac{A}{2}|D u|^{2}+\frac{B}{2} |x|^{2},
$$ 
where $P_{m}=\sum_{i=1}^{n} \kappa_{i}^{m}$, $\kappa_{i}=\lambda_{i}+K_{0}>0$, $K_0$ is defined in Definition \ref{def sigma_k}, and $M, m, A, B>0$ are parameters to be determined later.

Assume that $\p$ attains its maximum at $x_{0}$. Furthermore, suppose that the Hessian of $u$ at $x_0$ is given by $D^{2} u\left(x_{0}\right)=\operatorname{diag}\left(u_{11}\left(x_{0}\right), \cdots, u_{n n}\left(x_{0}\right)\right)$, with $u_{11}\left(x_{0}\right) \geq \cdots \geq  u_{n n}\left(x_{0}\right)$. 
In the remainder of the proof, all computations will be carried out at $x_0$.
\begin{equation}\label{eq first derivative of test function}
    0=\phi_{i}=M \frac{u_{i}}{u}+\frac{m \sum_{j} \ka_{j}^{m-1}}{P_{m}} u_{j j i}+A u_{i} u_{i i}+B x_{i} ,
\end{equation}
\begin{align}
    0 \geq \phi_{i i}=& M \frac{u_{i i}}{u}-M \frac{u_{i}^{2}}{u^{2}}+m(m-1) \frac{\sum_{j} \kappa_{j}^{m-2}}{P_{m}} u_{j j i}^{2}-\frac{m^{2}\left(\sum_{j} \kappa_{j}^{m-1} u_{j j i}\right)^{2}}{P_{m}^{2}} \label{eq second derivative of test function} \\
    & +\frac{m \sum_{j} \kappa_{j}^{m-1}}{P_{m}}\left[u_{j j i i}-2 \sum_{p \neq j} \frac{u_{p j i}^{2}}{u_{p p}-u_{j j}}\right]+A u_{i i}^{2}+A u_{s} u_{ s i i}+B. \nonumber
\end{align}

By Lemma \ref{lem 2.3} (a), $\{S_{k}^{i j}\}$ is elliptic. Therefore, contracting \eqref{eq second derivative of test function} with $S_k^{ii}$, we have
\begin{align*}
    0 \geq S_{k}^{i i} \phi_{i i}  =& M S_{k}^{i i} \frac{u_{i i}}{u}-M S_{k}^{i i}\left(\frac{u_{i}}{u}\right)^{2}+m(m-1)  \frac{\sum_{j}\kappa_{j}^{m-2}}{P_{m}} S_{k}^{i i} u_{j j i}^{2} \\
    & -\frac{m^{2}}{P_{m}^{2}} S_{k}^{i i}\left(\sum_{j} \kappa_{j}^{m-1} u_{j j i}\right)^{2}+\frac{m}{P_{m}} S_{k}^{i i} \sum_{j}\kappa_{j}^{m-1} u_{j j i i} \\
    & +\frac{m}{P_m}S_{k}^{i i}  \sum_{p \neq j} \frac{\kappa_{p}^{m-1}-\kappa_{j}^{m-1}}{\kappa_{ p}-\kappa_{ j}} u_{p j i}^{2}+A S_{k}^{i i} u_{i i}^{2}+A S_{k}^{i i} u_{s} u_{s i i}+B \sum_{i} S_{k}^{i i}.
\end{align*}

Differentiating equation \eqref{main eq} gives
\begin{align}
    S_{k}^{p q} u_{p q j}& =f_{x_j}+f_{u} u_{j}+  f_{p_j} u_{jj},  \label{First derivation of the main eq} \\
    S_{k}^{pq, r s} u_{p q j} u_{r s j}+S_{k}^{p q} u_{p q jj}   =&f_{x_j x_j}+f_{u u} u_{j}^{2}+f_{u} u_{jj}+f_{p_j p_{j}} u_{jj}^{2}+f_{p_s} u_{s jj}\label{second derivation of the main eq}  \\
    & +2 f_{x_j u} u_{j}+2 f_{x_j p_{j}} u_{jj}+2 f_{u p_{j}} u_{j} u_{jj}. \nonumber
\end{align}
Thus
$$
\sum_{i} S_{k}^{i i} u_{j j i i}\geq-C-Cu_{11}^{2}+\sum_{s} f_{p_s} u_{s j j}-S_{k}^{pq,rs} u_{p q j} u_{r s j},
$$
and
\begin{align*}
    \frac{m}{P_{m}} S_{k}^{ii} \sum_{j} \kappa_{j}^{m-1} u_{j j i i}  \geq& \frac{m}{P_{m}} \sum_{j} \kappa_{j}^{m-1}\left[-C-C u_{11}^{2}+\sum_{s} f_{p_j} u_{s j j}\right] \\
    & -\frac{m}{P_{m}} \sum_{j} \kappa_{j}^{m-1} S_{k}^{pq,rs} u_{pqj}u_{rsj}.
\end{align*}

Therefore
\begin{align}
    0 
    \geq& M \frac{S_{k}^{i i} u_{i i}}{u}-M \frac{S_{k}^{i i} u_{i}^{2}}{u^{2}}+A S_{k}^{i i} u_{i i}^{2}+B \sum_{i} S_{k}^{i i}-Cm\left(1+u_{11}\right) \label{inequality 1} \\
    &+ \underbrace{A S_k^{i i} u_{s} u_{s i i}+\frac{m}{P_{m}} \sum_{s} \sum_{j} \kappa_{j}^{m-1} f_{p_s} u_{s j j}}_{\operatorname{I}} \underbrace{-\frac{m}{P_{m}} \sum_j \kappa_{j}^{m-1} S_{k}^{pq, r s} u_{p q j} u_{r s j}}_{\operatorname{II}} \nonumber\\
    & + \underbrace{\frac{m(m-1)}{P_{m}} \sum_j \kappa_{j}^{m-2} S_{k}^{ii} u_{jji}^2}_{\operatorname{III}} 
    +\underbrace{\frac{m}{P_{m}} \sum_{p\ne j}  S_{k}^{ii}\frac{\kappa_{p}^{m-1}-\kappa_j^{m-1}}{\kappa_p-\kappa_j}u_{p ji}^2 }_{\operatorname{IV}} \nonumber \\
    & \underbrace{-\frac{m^{2}}{P_{m}^{2}} \sum_{i} S_{k}^{i i}\left(\sum_{j} \kappa_{j}^{m-1} u_{j j i}\right)^{2} }_{\operatorname{V}}.\nonumber
\end{align}

Without loss of generality, assume that $(-u) u_{11} \geq L$ for a sufficiently large constant $L\gg M>0$. Then, by applying \eqref{eq first derivative of test function} and \eqref{First derivation of the main eq}, we obtain
$$
\operatorname{I}=A f_{x_s} u_{s}+A f_{u}|D u|^{2}-M \frac{f_{p_s} u_{s}}{u}-B f_{p_s} x_{s} \geq-C_{A,B}+C\frac{M}{u}\geq -C_{A,B}-Cu_{11},
$$
where $C_{A,B}\leq C(A+B)$. Combining $\frac{\left(S_{k}\right)_{j}^{2}}{S_{k}} \leq C\left(1+u_{11}^{2}\right)$ with
\begin{align*}
    S_{k}^{p q ,r s} u_{p q j} u_{r s j} & =S_{k}^{p p, q q} u_{p p j} u_{q q j}-S_{k}^{p p, q q} u_{p q j}^{2} \\
    & =S_{k}^{p p ,q q} u_{p p j} u_{q q j}-2\sum_{p\ne j} S_{k}^{p p, j j} u_{p j j}^{2}-\sum_{p \neq j;q\ne j} S_{k}^{p p, q q} u_{p q j}^{2},
\end{align*}
it follows that
\begin{align*}
    \operatorname{II}\geq  & \frac{m}{P_m} \sum_j \kappa_j^{m-1} \z[-C(1+u_{11}^2)+2\frac{(S_k)_j^2}{S_k}-S_k^{pp,qq}u_{ppj}u_{qqj}+2S_k^{pp,jj}u_{pjj}^2\x] \\
    \geq& -Cm\left(\frac{1}{u_{11}}+u_{11}\right)+\frac{m}{P_{m}} \sum_{j} \kappa_{j}^{m-1}\left[2 \frac{(S_{k})_j^2}{S_k}-S_{k}^{pp,qq} u_{p p j} u_{q qj}\right] \\
    & +\frac{2 m}{P_{m}} \sum_{j} \kappa_{j}^{m-1} \sum_{p \neq j} S_{k}^{pp,jj} u_{p j j}^{2}.
\end{align*}
On the other hand,
\begin{align*}
    \operatorname{IV} & \geq \frac{2 m}{P_{m}} \sum_{i} \sum_{j \neq i} S_{k}^{i i} \frac{\kappa_{i}^{m-1} -\kappa_{j}^{m-1}}{\kappa_{i}-\kappa_{j}} u_{jii}^{2} = \frac{2 m}{P_{m}} \sum_{i} \sum_{j \ne i} S_{k}^{jj} \frac{\kappa_{j}^{m-1}- \kappa_{i}^{m-1}}{\kappa_{j}-\kappa_{i}} u_{jji}^{2}.
\end{align*}
Using \eqref{inequality 1} and the above estimation, we get
\begin{align}
    0  \geq &M \frac{S_{k}^{i i} u_{i i}}{u}-M S_{k}^{i i}\left(\frac{u_{i}}{u}\right)^{2}+A S_{k}^{i i} u_{i i}^{2}+B \sum_{i} S_{k}^{i i}-C_{A,B}-Cmu_{11}  \label{inequality 2} \\
    & +\frac{m}{P_{m}} \sum_{j} \kappa_{j}^{m-1}\left[2 \frac{(S_{k})_j^2}{S_k}-S_{k}^{pp,qq} u_{p p j} u_{q qj}\right] +\frac{2 m}{P_{m}} \sum_{j} \kappa_{j}^{m-1} \sum_{p \neq j} S_{k}^{pp,jj} u_{j jp }^{2} \nonumber \\
    & +\frac{2 m}{P_{m}} \sum_{i} \sum_{j \neq i} S_{k}^{jj} \frac{\kappa_{j}^{m-1}-\kappa_{i}^{m-1}}{\kappa_{j}-\kappa_{i}} u_{j j i}^{2}-\frac{m^{2}}{P_{m}^2} \sum_{i} S_{k}^{i i}\left(\sum_{j} \kappa_{j}^{m-1} u_{j j i}\right)^{2} \nonumber \\
    & +\frac{m(m-1)}{P_{m}} \sum_{i} \sum_{j} \kappa_{j}^{m-2} S_{k}^{i i} u_{j j i}^{2}.\nonumber
\end{align}

Let
\begin{align*}
    &A_{i}=\frac{m }{P_{m}} \kappa_i^{m-1} \left[2 \frac{(S_{k})_i^{2}}{S_{k}}-S_{k}^{p p ,q q} u_{p p i} u_{q i i}\right],&& B_{i}=\frac{2 m}{P_{m}} \sum_{j \neq i} \kappa_{j}^{m-1} S_{k}^{ii,jj} u_{jji}^{2}, \\
    &C_{i}=\frac{m(m-1)}{P_{m}} \sum_{j} S_{k}^{ii } \kappa_{j}^{m-2} u_{j j i}^{2},&& D_{i}=\frac{2 m}{P_{m}} \sum_{j \neq i} S_{k}^{j j} \frac{\kappa_{j}^{m-1}-\kappa_{i}^{m-1}}{\kappa_{j}-\kappa_{i}} u_{j j i}^{2}, \\
    & E_{i}=\frac{m^{2}}{P_{m}^{2}} S_{k}^{i i}\left(\sum_j \ka_{j}^{m-1} u_{j j i}\right)^{2}.&& \\
\end{align*}
Therefore, equation \eqref{inequality 2} becomes
\begin{align}\label{inequality 3}
    0  \geq  &\sum_{i}\left[A_{i}+B_{i}+C_{i}+D_{i}-E_{i}\right] \\
    & +M \frac{S_{k}^{i i} u_{i i}}{u}-M \frac{S_{k}^{i i} u_{i}^{2}}{u^{2}}+A S_{k}^{i i} u_{i i}^{2}+B \sum_{i} S_{k}^{i i}-C_{A,B}-Cmu_{11}.\nonumber
\end{align}

We first address the third-order derivative terms, and the proof is similar to that in \cite{LRW2}.
\begin{lemma}\label{lem A_i+... geq 0}
    For any $i\ne 1$, we have 
    $$
    A_{i}+B_{i}+C_{i}+D_{i}-\left(1+\frac{1}{m}\right) E_{i} \geq 0,\quad \te{for sufficiently large}\ m \ \te{and} \ \lam_1.
    $$
\end{lemma}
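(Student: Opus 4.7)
The plan is to show that the sole negative contribution $(1+1/m)E_i$ is absorbed by $A_i + B_i + C_i + D_i$ for $i \neq 1$, under the assumption that both $m$ and $\lam_1$ are large. Three ingredients come into play: (i) a Cauchy--Schwarz estimate that converts $E_i$ (a square of a sum) into a sum of squares of the form $\sum_j \ka_j^{m-2}u_{jji}^2$, which pairs naturally against $C_i$; (ii) the concavity inequality of Lemma \ref{lem 2.3}(e) applied to the second-order term in $A_i$; and (iii) the fact that both $B_i$ and $D_i$ are nonnegative and carry factors $\ka_j^{m-1}$ or $\frac{\ka_j^{m-1} - \ka_i^{m-1}}{\ka_j - \ka_i}$, which at worst scale like $\ka_j^{m-2}$, matching the residual order after $C_i - (1+1/m)E_i$.

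Concretely, first I apply Cauchy--Schwarz with the weights $\ka_j^m / P_m$ (which sum to $1$):
\begin{equation*}
\left(\sum_j \ka_j^{m-1} u_{jji}\right)^2 \leq P_m \sum_j \ka_j^{m-2}u_{jji}^2,
\end{equation*}
which yields $(1 + 1/m)E_i \leq \frac{m(m+1)}{P_m}S_k^{ii}\sum_j \ka_j^{m-2}u_{jji}^2$. Since $C_i = \frac{m(m-1)}{P_m} S_k^{ii}\sum_j \ka_j^{m-2}u_{jji}^2$, the residue to be absorbed reduces to $\frac{2m S_k^{ii}}{P_m}\sum_j \ka_j^{m-2}u_{jji}^2$. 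I then split this residue according to whether $j = i$, $j = 1$, or $j \notin \{1,i\}$. The $j = i$ piece is absorbed by $A_i$ together with the $j = i$ summand in $C_i$, invoking \eqref{2.2} from Lemma \ref{lem 2.3}(e) (for instance with $l = k-1$, $\tau = 1$, $\de$ small) to extract a positive lower bound on $2\frac{(S_k)_i^2}{S_k} - S_k^{pp,qq}u_{ppi}u_{qqi}$. The $j = 1$ piece is absorbed by the $j = 1$ summand of $B_i$, via the factoring
\begin{equation*}
S_k^{ii} = S_{k-1}(\lam|i) + \al S_{k-2}(\lam|i) = \ka_1\, S_k^{ii,11} + \text{terms independent of } \lam_1,
\end{equation*}
which for large $\lam_1$ gives $\ka_1 S_k^{ii,11} \geq c\, S_k^{ii}$. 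The remaining $j \notin \{1,i\}$ pieces are handled by $D_i$, using the elementary identity
\begin{equation*}
\frac{\ka_j^{m-1} - \ka_i^{m-1}}{\ka_j - \ka_i} = \sum_{\ell=0}^{m-2} \ka_j^\ell \ka_i^{m-2-\ell} \geq \max(\ka_j, \ka_i)^{m-2},
\end{equation*}
together with a case analysis comparing $S_k^{jj}$ with $S_k^{ii}$ through the analogous factoring in $\lam_1$.

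The hardest part will be the balancing in the $j = i$ case: the Cauchy--Schwarz bound on $E_i$ is not tight, and the $A_i$-concavity term gains the right order only after careful choice of the $\de$-parameter together with the largeness of $m$ to suppress $O(1/m)$ losses. The restriction $i \neq 1$ enters critically in the $j = 1$ estimate, since the factoring $S_k^{ii} \sim \ka_1 S_k^{ii,11}$ fails precisely when $i = 1$; at $i = 1$ a separate mechanism (relying on the first-order relation \eqref{eq first derivative of test function}) is needed, which is why this lemma is stated only for $i \neq 1$. The assumption that $\lam_1$ is large is then used to convert constants of the form $1 + O(1/\lam_1)$ arising in the factoring step into quantities truly absorbable by the dominant positive terms.
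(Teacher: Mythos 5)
The Cauchy--Schwarz step is where your argument breaks. After bounding $(1+\frac{1}{m})E_i\le\frac{m(m+1)}{P_m}S_k^{ii}\sum_j\ka_j^{m-2}u_{jji}^2$ and cancelling against $C_i$, you must absorb the residue $\frac{2m}{P_m}S_k^{ii}\sum_j\ka_j^{m-2}u_{jji}^2$, and in particular its $j=i$ piece $\frac{2m}{P_m}S_k^{ii}\ka_i^{m-2}u_{iii}^2$. You cannot invoke ``the $j=i$ summand of $C_i$'' again---it has already been spent in full against the Cauchy--Schwarz bound. The only remaining term quadratic in $u_{iii}$ is $A_i$, and in the test configuration $u_{jji}=0$ for all $j\ne i$ one has exactly $A_i=\frac{2m\ka_i^{m-1}}{P_m}\frac{(S_k^{ii})^2}{S_k}u_{iii}^2$, so the absorption requires $\ka_iS_k^{ii}\ge S_k$. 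This fails whenever $\ka_i=\lam_i+K_0$ is small, e.g.\ $i=n$ with $\lam_n$ close to $-K_0$, which semi-convexity permits. The lemma is still true there, but only because Cauchy--Schwarz grossly overestimates $E_i$ in that configuration (it replaces $\ka_i^{2m-2}$ by $P_m\ka_i^{m-2}\gg\ka_i^{2m-2}$): the deficit you are trying to pay off is an artifact of your estimate, and the remaining terms genuinely cannot cover it.

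The paper avoids this by never symmetrizing the cross terms involving $u_{iii}$. It expands $\bigl(\sum_j\ka_j^{m-1}u_{jji}\bigr)^2$ exactly, applies the AM--GM splitting only to the cross terms $u_{ppi}u_{qqi}$ with $p,q\ne i$ (its $\lamm_3$), keeps the $u_{jji}u_{iii}$ cross terms ($\lamm_4$) intact, and closes by checking that the resulting quadratic form in $(u_{jji},u_{iii})$ with coefficients $(3m-3)\ka_j^{m-2}\ka_i^m$, $-2(m+1)\ka_j^{m-1}\ka_i^{m-1}$, $(m-3)\ka_j^m\ka_i^{m-2}$ is nonnegative for large $m$; the $u_{iii}^2$ coefficient stays positive precisely because the symmetric split was not applied to $\lamm_4$. (From $A_i$ the paper uses only $A_i\ge0$.) Two further points in your bookkeeping also need repair even for $j\ne i$: the correct mechanism is the identity $\ka_jS_k^{ii,jj}+S_k^{jj}=\ka_iS_k^{ii,jj}+S_k^{ii}$, which combines $B_i$ \emph{and} $D_i$; your bound $\ka_1S_k^{ii,11}\ge c\,S_k^{ii}$ only holds with $c<1$ in general (e.g.\ $k=2$ with $\lam_2$ comparable to $\lam_1$), and for $1<j<i$ one has $S_k^{jj}\le S_k^{ii}$, so $D_i$ alone falls short of the residue with the factor $S_k^{ii}$.
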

\begin{proof}
    By Lemma \ref{lem 2.3} (e), and letting $l=1$, we know for any $1\leq i \leq n$, $A_i\geq 0$. 
    
    Now compute the remaining terms. Let $\ga=1+\frac{1}{m}$,
    \begin{align*}
        & P_{m}^{2}\left[B_{i}+C_{i}+D_{i}-\ga E_{i}\right] \\
        = &\sum_{j \neq i} 2 m P_{m} \ka_{j}^{m-1} S_{k}^{i i, j j} u_{j j i}^{2}+m(m-1) P_{m} S_{k}^{i i} \sum_{j} \ka_{j}^{m-2} u_{j j i}^{2} \\
        &+ 2 m P_{m} \sum_{j \ne i} S_{k}^{jj} \frac{\ka_{j}^{m-1}-\ka_{i}^{m-1}}{\ka_{j}-\ka_{i}} u_{j j i}^{2} -m^{2} \ga S_{k}^{ii}\left(\sum_{j} \ka_{j}^{m-1} u_{j j i}\right)^{2} \\
        = & m P_{m} \sum_{j \neq i}\left(2 \ka_{j}^{m-1} S_{k}^{i i, j j}+(m-1) S_{k}^{i i} \ka_{j}^{m-2}+2 S_{k}^{j j} \frac{\ka_{j}^{m-1}-\ka_{i}^{m-1}}{\ka_{j}-\ka_{i}} \right)u_{jji}^2 \\
        &- m^{2} \ga \sum_{j\ne i} S_{k}^{i i} \ka_{j}^{2 m-2} u_{j j i}^{2} \\
        &+  m(m-1) P_{m} S_{k}^{i i} \ka_{i}^{m-2} u_{iii}^{2}-m^{2} \ga S_{k}^{i i} \ka_{i}^{2 m-2} u_{iii}^{2} (:=\lamm_2)\\
        &-  m^{2} \ga S_{k}^{i i} \sum_{p\ne i}\sum_{q \neq p, i} \ka_{p}^{m-1} u_{p p i} \ka_{q}^{m-1} u_{q q i} (:=\lamm_3)\\
        &-  2 m^{2} \ga S_{k}^{i i} \sum_{p \ne i} \ka_{p}^{m-1} u_{p p i} \ka_{i}^{m-1} u_{i i i}(:=\lamm_4) \\
        =& \lamm_1+\lamm_2+\lamm_3+\lamm_4.
    \end{align*}

     So let's first deal with 
     \begin{align*}
        \lamm_1 =&  m P_{m} \sum_{j \neq i}\left(2 \ka_{j}^{m-1} S_{k}^{i i, j j}+(m-1) S_{k}^{i i} \ka_{j}^{m-2}+2 S_{k}^{j j} \frac{\ka_{j}^{m-1}-\ka_{i}^{m-1}}{\ka_{j}-\ka_{i}} \right)u_{jji}^2 \\
        & - m^{2} \ga \sum_{j\ne i} S_{k}^{i i} \ka_{j}^{2 m-2} u_{j j i}^{2}. 
     \end{align*}
     For $j \neq i$, $(\lam_j-\lam_i)S_k^{ii,jj}=S_{k}^{ii}-S_k^{jj}$. Then 
    \begin{equation}\label{ka_i and ka_j}
        \ka_j S_k^{ii,jj}+S_k^{jj}=\ka_i S_k^{ii,jj}+S_k^{ii},
    \end{equation}
    and
    \begin{align*}
        & mP_{m}\left[2 \ka_{j}^{m-1} S_{k}^{i i,j j}+(m-1) S_{k}^{i i} \ka_{j}^{m-2}+2 S_{k}^{j j} \sum_{l=0}^{m-2} \ka_{i}^{m-2-l} \ka_{j}^{l}\right]-m^{2} \ga S_{k}^{i i} \ka_{j}^{2 m-2} \\
        & =m P_{m}\left[2 \ka_{j}^{m-2}\left(\ka_{j} S_{k}^{i i,j j}+S_{k}^{j j}\right)+(m-1) S_{k}^{ii} \ka_{j}^{m-2}+2 S_{k}^{j j} \sum_{l=0}^{m-3} \ka_{i}^{m-2-l} \ka_{j}^{l}\right]-m^{2} \ga S_{k}^{ii} \ka_{j}^{2 m-2} \\
        & = mP_{m}\left[2  \ka_{j}^{m-2}\left(\ka_{i} S_{k}^{i i,j j}+S_{k}^{i i}\right)+(m-1) S_{k}^{i i} \ka_{j}^{m-2}+2  S_{k}^{j j} \sum_{l=0}^{m-3} \ka_{i}^{m-2-l} \ka_{j}^{l}\right]-m^{2} \ga S_{k}^{i i} \ka_{j}^{2 m-2} \\
        & =m P_{m}\left[2 \ka_{j}^{m-2} \ka_{i} S_{k}^{i i , j j}+(m+1) S_{k}^{i i} \ka_{j}^{m-2}+2 S_{k}^{j j} \sum_{l=0}^{m-3}\ka_{i}^{m-2-l} \ka_{j}^{l}\right]-m^{2} \ga S_{k}^{i i} \ka_{j}^{2 m-2} \\
        & =mP_{m}\left[2  \ka_{j}^{m-2} \ka_{i} S_{k}^{ii, j j}+2  S_{k}^{j j} \sum_{l=0}^{m-3} \ka_{i}^{m-2-l} \ka_{j}^{l}\right]+m(m+1)  S_{k}^{i i} \ka_{j}^{m-2} \sum_{p \neq j} \ka_{p}^{m}.
    \end{align*}
    Thus
    \begin{align*}
        \lamm_1 =&  \sum_{j \ne i} mP_{m}\left[2  \ka_{j}^{m-2} \ka_{i} S_{k}^{ii, j j}+2  S_{k}^{j j} \sum_{l=0}^{m-3} \ka_{i}^{m-2-l} \ka_{j}^{l}\right]u_{jji}^2 \\
        & +  m(m+1)  S_{k}^{i i}\ka_{i}^{m} \sum_{j\ne i} \ka_{j}^{m-2}   u_{jji}^2 + m(m+1)  S_{k}^{i i} \sum_{j\ne i}\sum_{p\ne j, i} \ka_{j}^{m-2} \ka_{p}^{m}u_{jji}^2.
    \end{align*}

    For $\lamm_3$, since  
    \begin{align*}
        2 \sum_{p\ne i}\sum_{q \neq p,i}  \ka_{q}^{m-2} \ka_{p}^{m} u_{qq i}^{2}&=\sum_{p \neq i} \sum_{q \ne i, p} \ka_{p}^{m-2} \ka_{q}^{m} u_{p p i}^{2}+\sum_{p \neq i} \sum_{q \neq i, p} \ka_{q}^{m-2} \ka_{p}^{m} u_{q q i}^{2} \\
        &\geq  2 \sum_{p\ne i}\sum_{q \neq p ,i} \ka_{p}^{m-1} \ka_{q}^{m-1} u_{pp i}u_{qqi}.
    \end{align*}
    Hence
    \begin{align*}
        \lamm_3  \geq-m^{2} \ga S_{k}^{ii} \sum_{p\ne i}\sum_{q\ne p, i} \ka_{q}^{m-2} \ka_{p}^{m} u_{q q i}^{2}=-m(1+m) S_{k}^{i i}  \sum_{p\ne i}\sum_{q\ne p, i} \ka_{q}^{m-2} \ka_{p}^{m} u_{qq i}^{2}, 
    \end{align*}
    and
    \begin{align}\label{eq 1+3}
        \lamm_1 +\lamm_3  \geq&  \sum_{j \ne i} mP_{m}\left[2  \ka_{j}^{m-2} \ka_{i} S_{k}^{ii, j j}+2  S_{k}^{j j} \sum_{l=0}^{m-3} \ka_{i}^{m-2-l} \ka_{j}^{l}\right]u_{jji}^2 \\
        & +  m(m+1)  S_{k}^{i i}\ka_{i}^{m} \sum_{j\ne i} \ka_{j}^{m-2}   u_{jji}^2.\nonumber
    \end{align}
    
    Finally, 
    \begin{align}\label{eq 2}
        \lamm_2 
        =m S_{k}^{i i}\left[(m-1) \sum_{j \neq i} \ka_{j}^{m}-2 \ka_{i}^{m}\right] \ka_{i}^{m-2} u_{iii}^{2}.
    \end{align}
    Combining \eqref{eq 1+3} with \eqref{eq 2} yields
        \begin{align*}
        \lamm_1 +\lamm_2+\lamm_3 +\lamm_4 
        \geq & 2 m \sum_{j\ne i} P_{m}\left[\ka_{j}^{m-2} \ka_{i} S_{k}^{i i, j j}+S_{k}^{j j} \sum_{l=0}^{m-3} \ka_{i}^{m-2-l} \ka_{j}^l\right] u_{j j i}^{2} \\
        &+ m(1+m) S_{k}^{i i} \sum_{j \neq i} \ka_{j}^{m-2} \ka_{i}^{m} u_{j j i}^{2} \\
        &+ m S_{k}^{i i}\left[(m-1) \sum_{j \neq i} \ka_{j}^{m}-2 \ka_{i}^{m}\right] \ka_{i}^{m-2} u_{i i i}^{2} \\
        &- 2 m^{2} \ga S_{k}^{i i} \sum_{p \neq i} u_{p p i} \ka_{p}^{m-1} \ka_{i}^{m-1} u_{i i i} \\
        \triangleq&  \mathcal{P},
    \end{align*}
    
    Now, let us consider the term $\ka_{j}^{m-2} \ka_{i} S_{k}^{i i, jj}+S_k^{jj}\sum_{l=0}^{m-3} \ka_{i}^{m-2-l} \ka_{j}^{l}$. Using \eqref{ka_i and ka_j} multiple times, we have 
    \begin{eqnarray*}
        && \ka_{j}^{m-2} \ka_{i} S_{k}^{i i, jj}+S_k^{jj}\sum_{l=0}^{m-3} \ka_{i}^{m-2-l} \ka_{j}^{l} \\
        &=& \ka_{j}^{m-3} \ka_{i}\z[\ka_j S_k^{i i, jj}+S_{k}^{jj}\x]+S_{k}^{jj}\sum_{l=0}^{m-4} \ka_{i}^{m-2-l} \ka_{j}^{l}  \\
        &=& \ka_{j}^{m-3} \ka_{i}\left[\ka_i S_k^{i i, jj}+S_{k}^{i i}\right]+S_{k}^{jj}\sum_{l=0}^{m-4} \ka_{i}^{m-2-l} \ka_{j}^{l}  \\
        &=& \ka_{j}^{m-3} \ka_{i} S_{k}^{i i}+\z[\ka_{j}^{m-3} \ka_{i}^{2} S_{k}^{i i, jj} +S_{k}^{jj}\sum_{l=0}^{m-4} \ka_{i}^{m-2-l} \ka_{j}^{l}\x]  \\
        &=&  \cdots \\
        &=& \left[\ka_{j}^{m-3} \ka_{i}+\ka_{j}^{m-4} \ka_{i}^{2}+\cdots+\ka_{j} \ka_{i}^{m-3}+\ka_{i}^{m-2}\right] S_{k}^{ii} +\ka_i^{m-1}S_k^{ii,jj}. 
    \end{eqnarray*}
    Hence
    $$ 2 m P_{m}\left[\ka_{j}^{m-2} \ka_{i} S_{k}^{ii, jj}+S_{k}^{jj} \sum_{l=0}^{m-3} \ka_{i}^{m-2-l} \ka_{j}^{l}\right] u_{jj i}^{2} \geq 2 m(m-2) S_{k}^{i i} \ka_{j}^{m-2}\ka_i^{m}  u_{jji}^{2},$$
    and
    \begin{align*}
        \mathcal{P}
        \geq m S_k^{ii} \sum_{j\ne i} &\bigg[ (3m-3)\ka_j^{m-2}\ka_i^m u_{jji}^2 -2(1+m) \ka_{j}^{m-1}\ka_i^{m-1}u_{jji}u_{iii} \\
        & +(m-3)\ka_j^{m}\ka_i^{m-2}u_{iii}^2\bigg]\geq 0.
    \end{align*}
    Here, we choose $m$ sufficiently large such that $(3m-3)(m-3)\geq (1+m)^2$.

    Therefore, by choosing $m$ sufficiently large, we have
    $$
        P_m^2\z[B_i+C_i+D_i-(1+\frac{1}{m})E_i\x]\geq \mathcal{P} \geq 0,\quad \te{for any } i \ne 1.
    $$
\end{proof}

Next, for $i=1$, we prove the following lemma.
\begin{lemma}\label{lem A_1+... geq ...}
    For $\mu=1,\cdots,k-1$, if there exists some positive constant $\de_\mu \leq 1$, such that $\frac{\lam_{\mu}}{\lam_1}\geq \de_{\mu} $. Then there exist two sufficiently small positive constants $\e$, $\de_{\mu+1}$ depending on $\de_{\mu}$, such that, if $\frac{\lam_{\mu+1}}{\lam_1}\leq \de_{\mu+1}$ and $\lam_1$ is sufficiently large, we have
    \begin{align}\label{eq A_1+B_1+... 0} 
        \frac{P_m^2}{m}\z[A_1+B_1+C_1+D_1-\z(1+\frac{\e}{m}\x)E_1\x] \geq& (1+\e) \z[S_k-\ka_1 S_k^{11}\x] \ka_1^{2m-3} u_{111}^2 \\
        & +2 P_m\sum_{j \ne 1}S_k^{jj}\sum_{l=1}^{m-4}\ka_j^l \ka_1^{m-2-l} u_{jj1}^2.\nonumber 
    \end{align}
\end{lemma}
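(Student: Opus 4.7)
The plan is to mirror the decomposition used in the proof of Lemma~3.1, specialized to $i=1$ and with slack parameter $\gamma = 1+\varepsilon/m$ in place of $1+1/m$. Write $P_m^2[B_1+C_1+D_1-\gamma E_1] = \Lambda_1+\Lambda_2+\Lambda_3+\Lambda_4$ exactly as there. Applying the same AM-GM pairing to $\Lambda_3$ and telescoping via $\kappa_j S_k^{11,jj}+S_k^{jj} = \kappa_1 S_k^{11,jj}+S_k^{11}$ rewrites the coefficient of $u_{jj1}^2$ (for $j\neq 1$) in $[\Lambda_1+\Lambda_2+\Lambda_3+\Lambda_4]/m$ as
\[
2P_m\bigl[\kappa_j^{m-2}\kappa_1 S_k^{11,jj}+S_k^{jj}\textstyle\sum_{l=0}^{m-3}\kappa_1^{m-2-l}\kappa_j^l\bigr]+S_k^{11}\kappa_j^{m-2}\bigl[(m+1)\kappa_1^m+(1-\varepsilon)\textstyle\sum_{p\neq 1}\kappa_p^m\bigr],
\]
which is manifestly positive, while the $u_{111}^2$ coefficient coming from $\Lambda_2/m$ is $S_k^{11}\kappa_1^{m-2}[(m-1)\sum_{j\neq 1}\kappa_j^m-(1+\varepsilon)\kappa_1^m]$ --- negative in the dominant part because $\kappa_1$ is now the largest eigenvalue and the Lemma~3.1 domination argument fails. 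Peeling off the $l=0$ and $l=m-3$ endpoints of the sum in the first bracket above extracts precisely the residual $2P_m\sum_{j\neq 1}S_k^{jj}\sum_{l=1}^{m-4}\kappa_j^l\kappa_1^{m-2-l}u_{jj1}^2$ claimed on the right-hand side, leaving behind the strictly positive $u_{jj1}^2$ leftovers $2P_m S_k^{jj}[\kappa_1^{m-2}+\kappa_1\kappa_j^{m-3}]u_{jj1}^2$ together with the whole $S_k^{11}$-weighted contribution.

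After the residuals are extracted, what remains is to show that the positive $u_{jj1}^2$ leftovers plus $P_m^2 A_1/m = P_m\kappa_1^{m-1}[2(S_k)_1^2/S_k - S_k^{pp,qq}u_{pp1}u_{qq1}]$ dominate the negative $u_{111}^2$ and the $u_{pp1}u_{111}$ cross terms. I bound $A_1$ from below using Lemma~2.3(e) with $l=1$; since $S_1^{pp,qq}\equiv 0$ this provides a nonnegative contribution consisting of a fraction of $(S_k)_1^2/S_k$ plus a Newton--MacLaurin remainder $S_k((S_1)_1/S_1)^2$. The cross terms $-2u_{pp1}u_{111}$, coming from $\Lambda_4/m$ and from the $p\neq 1$, $q=1$ part of $-S_k^{pp,qq}u_{pp1}u_{qq1}$, are handled by Cauchy--Schwarz with an index-dependent weight $\eta_j$: for $j\leq\mu$ (where $\kappa_j\geq\delta_\mu\kappa_1$) one takes $\eta_j$ of order $1$, and the $\kappa_j^{m-2}\kappa_1^m\geq\delta_\mu^{m-2}\kappa_1^{2m-2}$ part of the positive $u_{jj1}^2$ coefficient absorbs the $u_{jj1}^2$ loss; for $j>\mu$ (where $\kappa_j\leq\delta_{\mu+1}\kappa_1$) one takes $\eta_j$ small, producing a $u_{111}^2$ loss of size $\eta_j^{-1}\kappa_j^{m-1}\kappa_1^{m-1}\leq\eta_j^{-1}\delta_{\mu+1}^{m-1}\kappa_1^{2m-2}$, which is negligible once $\delta_{\mu+1}$ is small enough compared with $\eta_j$.

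The hard part will be the final $u_{111}^2$ balance. Using $\kappa_1 S_k^{11}=S_k-[S_k-\kappa_1 S_k^{11}]$, the combined $u_{111}^2$ coefficient on LHS minus RHS simplifies to $S_k^{11}\kappa_1^{m-2}(m-1)\sum_{j\neq 1}\kappa_j^m - (1+\varepsilon)S_k\kappa_1^{2m-3}$ plus the $A_1$ buffer and the Cauchy--Schwarz spillover. Under the hypothesis $\lambda_\mu\geq\delta_\mu\lambda_1$, the $(k-1)$-convexity and semi-convexity through Lemma~2.5(a) give $\sigma_{k-1}\geq c_0$, which in turn yields a lower bound on $S_k-\kappa_1 S_k^{11}=S_k(\lambda|1)-K_0 S_k^{11}$ large enough to make the residual $(1+\varepsilon)[S_k-\kappa_1 S_k^{11}]\kappa_1^{2m-3}$ already absorb the leftover losses on the right-hand side, while the $A_1$ contribution together with the positive $(m-1)\sum_{j\neq 1}\kappa_j^m$ piece in $\Lambda_2$ compensates the negative $-(1+\varepsilon)S_k\kappa_1^{2m-3}$ modulo the Cauchy--Schwarz slack. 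The quantitative tuning---$\varepsilon$ small depending on the constants in Lemma~2.3(e), and $\delta_{\mu+1}$ small depending on $\delta_\mu$, $n$, $k$ and $\varepsilon$---is the most delicate step, and the smallness requirement on $\delta_{\mu+1}$ is precisely what forces the inductive structure of the statement.
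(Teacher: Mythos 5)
Your treatment of $B_1+C_1+D_1-(1+\tfrac{\e}{m})E_1$ — the telescoping via $\ka_jS_k^{11,jj}+S_k^{jj}=\ka_1S_k^{11,jj}+S_k^{11}$, the quadratic-form absorption of the cross terms, and the extraction of the residual $2P_m\sum_{j\ne1}S_k^{jj}\sum_{l=1}^{m-4}\ka_1^{m-2-l}\ka_j^lu_{jj1}^2$ — matches the paper and is fine. The gap is in your treatment of $A_1$. You invoke Lemma \ref{lem 2.3}(e) only with $l=1$, which produces the remainder $S_k\bigl[(S_1)_1/S_1\bigr]^2=S_k(\sum_ju_{jj1})^2/\sigma_1^2$. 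This is adequate only in the case $\mu=1$ (where $\sigma_1\approx\ka_1$, so after Cauchy–Schwarz one recovers a surplus factor $1+\tfrac{\tau}{4}(1-\de)$ on $S_k\ka_1^{2m-3}u_{111}^2$ that beats the deficit $-(1+\e)S_k\ka_1^{2m-3}u_{111}^2$). For $\mu\ge2$ it fails for two reasons: first, $\sigma_1\ge\lam_1+\lam_2\ge(1+\de_\mu)\lam_1$, so the factor $\ka_1^2/\sigma_1^2$ is bounded strictly below $1$ and the surplus is destroyed; second, and fatally, $\sum_ju_{jj1}$ can vanish identically while $u_{111}$ is large (e.g.\ $u_{221}\approx-u_{111}$), so this remainder gives no control of $u_{111}^2$ at all. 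The paper's proof is forced into a three-case analysis precisely for this reason: for $2\le\mu\le k-1$ it applies Lemma \ref{lem 2.3}(e) with $l=\mu$, obtaining $\frac{S_k}{S_\mu^2}(\sum_jS_\mu^{jj}u_{jj1})^2$, and then must expand this and estimate $S_\mu^{pp}S_\mu^{qq}-S_\mu S_\mu^{pp,qq}$ via the splitting $T_1+T_2+T_3$, using the gap $\lam_{\mu+1}\le\de_{\mu+1}\lam_1$, semi-convexity, and Lemma \ref{lem 2.5}(b) to show near-equality in the Newton-type inequality for $p,q\le\mu$. None of this is present in your outline, and it cannot be bypassed.

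Your final balancing step also mis-assigns the resources. The term $(1+\e)\bigl[S_k-\ka_1S_k^{11}\bigr]\ka_1^{2m-3}u_{111}^2$ is not a positive reserve that "absorbs leftover losses": $S_k-\ka_1S_k^{11}=S_k(\lam|1)-K_0S_{k-1}(\lam|1)$ can be negative, which is exactly why the lemma carries it unestimated to the right-hand side and why Lemmas \ref{lem 3.3} and \ref{lem 3.4} exist downstream. Likewise, the $(m-1)\sum_{j\ne1}\ka_j^m\,\ka_1^{m-2}S_k^{11}u_{111}^2$ piece of $\Lambda_2$ is consumed by the Cauchy–Schwarz absorption of the $\Lambda_4$ cross terms (this is the inequality $(m+3)(m-1)>(m+\e)^2$) and is not available to offset $-(1+\e)S_k\ka_1^{2m-3}u_{111}^2$; the only resource for that deficit is the surplus from $A_1$, which your $l=1$-only argument does not supply when $\mu\ge2$. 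Finally, note that the negative $-CS_k$ (or $-CS_k/\de_\mu^2$) coefficients on $u_{qq1}^2$, $q>\mu$, produced by the $A_1$ estimate must be absorbed by the $l=0$ leftover $2P_m\ka_1^{m-2}S_k^{qq}u_{qq1}^2$ via Lemma \ref{lem 2.3}(b) ($\ka_1S_k^{qq}\ge\theta S_k/\de_{\mu+1}$), which is where the smallness of $\de_{\mu+1}$ actually enters — not, as you suggest, only through the cross-term weights.
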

\begin{proof}
    Computing similar to \cite[(3.18)]{LRW2}, we obtain
    \begin{align*}
        & \frac{P_{m}^{2}}{m}\left[B_{1}+C_{1}+D_{1}-\z(1+\frac{\e}{m}\x)E_{1}\right] \\
        \geq & \sum_{j \neq 1}\left[(1-\e)P_{m}+(m+\e) \ka_{1}^{m}\right] \ka_{j}^{m-2} S_{k}^{11} u_{j j 1}^{2} \\
        &+  \left[(m-1)\sum_{j\ne 1}\ka_j^m-(1+\e)\ka_{1}^{m}\right] \ka_{1}^{m-2} S_{k}^{11} u_{111}^{2} \\
        &-2(m+\e)  S_{k}^{11} \sum_{j \neq 1} u_{j j 1} \ka_{j}^{m-1}  \ka_{1}^{m-1} u_{111} +2 P_{m} \sum_{j \neq 1} S_{k}^{j j} \sum_{l=0}^{m-3} \ka_{1}^{m-2-l} \ka_{j}^{l} u_{j j 1}^{2}.
    \end{align*}

    Since $S_{k}^{j j} \geq S_k^{11}$ for $j \neq 1$, it follows that
    \begin{align*}
        2P_m\sum_{j\ne 1}S_k^{jj} \sum_{l=0}^{m-3}\ka_1^{m-2-l}\ka_j^l u_{jj1}^2 
        \geq & 2P_m \ka_1^{m-2}\sum_{j\ne 1}S_k^{jj}u_{jj1}^2+2\ka_1^{m+1} \sum_{j\ne 1}S_k^{11}\ka_j^{m-3} u_{jj1}^2 \\
        & + 2P_m \sum_{j\ne 1}S_k^{jj}\sum_{l=1}^{m-4}\ka_1^{m-2-l}\ka_j^l u_{jj1}^2.
    \end{align*}
    Then
    \begin{align*}
        & \frac{P_{m}^{2}}{m}\left[B_{1}+C_{1}+D_{1}-\z(1+\frac{\e}{m}\x)E_{1}\right] \\
        \geq&  S_k^{11} \sum_{j \neq 1} \z[ (m+3)  \ka_{1}^{m} \ka_{j}^{m-2}  u_{j j 1}^{2}-2 (m+\e) \ka_{j}^{m-1} \ka_{1}^{m-1}  u_{j j1} u_{111}  +(m-1) \ka_{1}^{m-2}\ka_j^{m}  u_{111}^{2} \x] \\
        &- (1+\e)\ka_{1}^{2 m-2} S_{k}^{11} u_{111}^{2}+2 P_{m} \ka_{1}^{m-2} \sum_{j \neq 1} S_{k}^{j j} u_{j j 1}^{2} \\
        &+ 2 P_{m} \sum_{j\ne 1} S_k^{j j} \sum_{l=1}^{m-4} \ka_{1}^{m-2-l} \ka_{j}^{l} u_{j j 1}^{2} \\
        \geq&  -(1+\e)\ka_{1}^{2 m-2} S_{k}^{11} u_{111}^{2}+2 P_{m} \ka_{1}^{m-2} \sum_{j\ne 1} S_{k}^{j j} u_{j j 1}^{2} \\
        &+  2 P_{m} \sum_{j \neq 1} S_{k}^{j j} \sum_{l=1}^{m-4} \ka_{1}^{m-2-l} \ka_{j}^{l} u_{j j 1}^{2}.
    \end{align*}
    Here, we have used the inequality $(m+3)(m-1)-(m+\e)^{2}>0$ for large $m$. This can be rewritten as
    \begin{align}\label{eq B_1+...}
        &\frac{P_{m}^{2}}{m}\left[B_{1}+C_1+D_{1}-\z(1+\frac{\e}{m}\x)E_{1}\right] \\
        \geq& -(1+\e)\ka_{1}^{2 m-3} S_{k} u_{111}^{2}+(1+\e)\left(S_{k}-\ka_{1} S_{k}^{11}\right) \ka_{1}^{2 m-3} u_{111}^{2} \nonumber \\
        &+ 2 P_{m} \ka_{1}^{m-2} \sum_{j \ne 1} S_{k}^{j j} u_{j j1}^{2}+2 P_{m} \sum_{j\ne 1} S_{k}^{j j} \sum_{l = 1}^{m-4} \ka_{1}^{m-2-l} \ka_{j}^{l} u_{j j 1}^{2}.\nonumber
    \end{align}

    Next, we  split the term $A_1=m\frac{ \ka_{1}^{m-1}}{P_{m}}\left[2 \frac{(S_k)_{1}^{2}}{S_{k}}-S_{k}^{pp,qq} u_{pq1} u_{qq1}\right]$ into three cases and, ultimately, complete the proof of this lemma.
    
    \textbf{Case 1.} For $\mu=1$, $\de_1=1$, and $\lam_2 \leq \de_2 \lam_1$, where $\de_2$ is a small parameter to be determined later. 
    By Lemma \ref{lem 2.3} (e), and letting $l=1$ and $\tau=\frac{1}{k-1}$, we have
    \begin{align}\label{eq A_1 2}
        \frac{P_{m}^{2}}{m} A_{1} & \geq P_{m} \ka_{1}^{m-1} \z(1+\tau(1-\delta)\x) \frac{S_k }{S_{1}^{2}} \left(\sum_j u_{jj1}\right)^2 \\
        & \geq P_{m} \ka_{1}^{m-3} S_{k}\z(1+\frac{\tau}{2}(1-\delta)\x) \left(\sum_j  u_{j j 1}\right)^{2} \nonumber\\
        & \geq P_m \ka_1^{m-3}S_k \z[  \z(1+\frac{\tau}{4}(1-\delta)\x) u_{111}^{2}-C \sum_{j\ne 1} u_{jj 1}^2 \x], \nonumber
    \end{align}
    for sufficiently small $\de$, $\de_2$ and $\frac{1}{\lam_1}$. Combining \eqref{eq B_1+...} with \eqref{eq A_1 2} yields
    \begin{align*}
        & \frac{P_{m}^{2}}{m}\left[A_{1}+B_{1}+C_{1}+D_{1}-\z(1+\frac{\e}{m}\x)E_1\right] \\
        \geq & \z[\frac{\tau}{4}(1-\delta)-\e\x]S_{k} \ka_{1}^{2 m-3}  u_{111}^{2} + (1+\e)\left(S_{k}-\ka_{1} S_k^{11}\right) \ka_{1}^{2 m-3} u_{111}^{2} \\
        &+  P_{m} \ka_{1}^{m-3} \sum_{j\ne 1}  \z[2\ka_1 S_{k}^{jj}-C S_k\x] u_{j j 1}^{2} +2 P_m \sum_{j\ne 1} S_{k}^{jj} \sum_{l=1}^{m-4} \ka_{1}^{m-2-l} \ka_{j}^l  u_{j j 1}^{2}.
    \end{align*}
    It follows form Lemma \ref{lem 2.3} (b) that $\ka_1 S_{k}^{j j}  \geq \frac{\lam_2}{\delta_{2}} S_{k}^{22} \geq \frac{\ttt}{\delta_{2}} S_{k}$, for any $j\ne 1$.
    Therefore, for a fixed small $\de$, we choose $\de_2$, $\e$, and $\frac{1}{\lam_1}$ sufficient small, such that
    \begin{align}
        \frac{P_{m}^{2}}{m}\left[A_{1}+B_{1}+C_{1}+D_{1}-\z(1+\frac{\e}{m}\x)E_{1}\right]  
        \geq&  (1+\e)\left(S_{k}-\ka_{1} S_{k}^{11}\right) \ka_{1}^{2 m-3} u_{111}^{2} \label{eq A_1+B_1+... 1} \\
        & 
        +2 P_{m} \sum_{j\ne 1} S_{k}^{jj} \sum_{l=1}^{m-4} \ka_{1}^{m-2-l} \ka_{j}^{l} u_{j j 1}^{2}. \nonumber
    \end{align}

    \textbf{Case 2.} For $2\leq \mu \leq k-2$, that is, $\lam_{\mu}\geq \de_\mu \lam_1$ and $\lam_{\mu+1}\leq \de_{\mu+1}\lam_1$, where $\de_{\mu+1}$ is a small parameter to be determined later. Applying Lemma \ref{lem 2.3} (e) again and letting $\tau=\frac{1}{k-\mu}$, we obtain
    \begin{align}\label{eq A_1 3}
        \frac{P_m^2}{m}A_1 \geq &(1+\tau(1-\de)) P_m \ka_1^{m-1}  \frac{S_k}{S_{\mu}^2}\z(\sum_j S_\mu^{jj}u_{jj1}\x)^2 -P_m\ka_1^{m-1}\frac{S_k}{S_\mu}S_{\mu}^{pp,qq}u_{pp1}u_{qq1}\nonumber\\
        \geq &   P_m \ka_1^{m-1} \frac{S_k}{S_\mu^2} \z[\sum_j (S_\mu^{jj}u_{jj1})^2 +\sum_{p\ne q}S_\mu^{pp}S_\mu^{qq}u_{pp1}u_{qq1}\x] \nonumber\\
        & - P_m \ka_1^{m-1} \frac{S_k}{S_\mu}S_{\mu}^{pp,qq}u_{pp1}u_{qq1} \nonumber\\
        =&   P_m \ka_1^{m-1} \frac{S_k}{S_\mu^2} \sum_j (S_\mu^{jj}u_{jj1})^2 + P_m \ka_1^{m-1} \frac{S_k}{S_\mu^2}\sum_{p\ne q} \z[S_{\mu}^{pp}S_\mu^{qq}-S_\mu S_\mu^{pp,qq}\x]u_{pp1}u_{qq1}.
    \end{align}

    Next, we split $\sum_{p\ne q}\z(S_{\mu}^{p p} S_{\mu}^{q q}-S_{\mu} S_{\mu}^{pp, q q}\x)u_{pp1}u_{qq1}$ into 
    three terms to deal with: 
    $$
        \sum_{p\ne q} \cdots=\sum_{p\ne q;p,q\leq \mu}\cdots +2\sum_{p\leq \mu;q>\mu}\cdots +\sum_{p\ne q; p,q>\mu}\cdots :=T_1+T_2+T_3.
    $$
    By Lemma \ref{lem 2.3} (c), for $p\ne q$, we have
    $$
    S_{\mu}^{p p} S_{\mu}^{q q}-S_{\mu} S_{\mu}^{pp, q q}=S_{\mu-1}^{2}(\lambda |p q)-S_{\mu}(\lambda |p q) S_{\mu-2}(\lambda | p q)\geq 0.
    $$

    First, let us consider $T_1$. We claim that for $p,q \leq \mu$, and $\de_{\mu+1},  \frac{1}{\lam_1}$ are sufficiently small,
    $$
        |S_{\mu}^{p p} S_{\mu}^{q q}-S_{\mu} S_{\mu}^{pp, q q}|\leq \es S_{\mu}^{p p} S_{\mu}^{q q},
    $$
    where small constant $\es>0$ depends on $\de_{\mu+1}$ and $\lam_1$. 
    
    Since $ \lambda \in \gaa_{k-1} \subset \gaa_{\mu+1}$, then
    \begin{align*}
        & S_{\mu}^{pp}=S_{\mu-1}(\lambda| p) \geq \frac{\lambda_{1 } \cdots\lambda_{\mu}}{\lambda_{p}}, \\
        & 0< S_{\mu-1}(\lam| pq) \leq C_{n-2}^{\mu-1} \frac{\lambda_{1} \cdots \lambda_{\mu+1}}{\lambda_{p} \lambda_{q}}, \\
        & 0< S_{\mu-2}(\lam| pq) \leq C_{n-2}^{\mu-2} \frac{\lambda_{1} \cdots \lambda_{\mu}}{\lambda_{p} \lambda_{q}}.
    \end{align*}
    On the other hand,
    \begin{align*}
        \left|S_{\mu}(\lambda | p q)\right| & \leq \frac{\lambda_{1} \cdots \lambda_{\mu}}{\lambda_{p} \lambda_{q}} \z[C\lam_{\mu+1}|\lam_{\mu+2}|+CK_0^2\x] \leq C\frac{\lambda_{1} \cdots \lambda_{\mu}}{\lambda_{p} \lambda_{q}} \z[\lam_{\mu+1}^2+K_0^2\x].
    \end{align*}
    Hence
    $$
    \z|S_{\mu-1}^2(\lam|pq)- S_{\mu}(\lambda | p q) S_{\mu-2}(\lambda | p q) \x| \leq C \z[\z(\frac{\de_{\mu+1}}{\de_\mu}\x)^2+\frac{1}{\de_\mu^2 \lam_1^2}\x]S_\mu^{pp}S_\mu^{qq},\quad \te{for } p,q \leq \mu.
    $$
    and 
    \begin{equation}\label{T_1}
        T_1\geq -\es \sum_{p\leq \mu}(S_\mu^{pp}u_{pp1})^2, \quad \te{provided that } \de_{\mu+1}, \ \frac{1}{\lam_1} \te{ are sufficiently small}.
    \end{equation}

    For the rest of the case. Since $0< S_{\mu}^{p p} S_{\mu}^{q q}-S_{\mu} S_{\mu}^{p p, q q} \leq S_{\mu}^{p p} S_{\mu}^{q q}$, 
    \begin{align}
        & T_2 \geq -2 \sum_{p\leq \mu;q>\mu} S_{\mu}^{p p}S_{\mu}^{qq}|u_{pp1}u_{qq1}| \geq -\es \sum_{p\leq \mu;q>\mu} (S_{\mu}^{p p}u_{pp1})^2-\frac{1}{\es}\sum_{p\leq \mu;q>\mu} (S_{\mu}^{qq}u_{qq1})^2,\label{T_2}\\
        & T_3\geq -\sum_{p\ne q;p,q>\mu} S_{\mu}^{p p}S_{\mu}^{qq}|u_{pp1}u_{qq1}|\geq -\sum_{p\ne q;p,q>\mu} (S_{\mu}^{p p}u_{pp1})^2.\label{T_3}
    \end{align}
    Substituting \eqref{T_1}, \eqref{T_2} and \eqref{T_3} into \eqref{eq A_1 3}, we obtain
    \begin{align}\label{eq A_1 4}
        \frac{P_m^2}{m}A_1 &\geq (1-\es)P_m \ka_1^{m-1} \frac{S_k}{S_\mu^2} \sum_{p\leq \mu}(S_\mu^{pp}u_{pp1})^2-C_\es P_m \ka_1^{m-1} \frac{S_k}{S_\mu^2} \sum_{q> \mu}(S_\mu^{qq}u_{qq1})^2 \nonumber \\
        & \geq (1-\es)(1+\de_\mu^m)\z(1-C\frac{\de_{\mu+1}}{\de_\mu}\x)\ka_1^{2m-3}S_k\sum_{p\leq \mu}u_{pp1}^2 -C_\es  P_m \ka_1^{m-3} \frac{S_k}{\de_\mu^2} \sum_{q> \mu}u_{qq1}^2.
    \end{align}
    Here, we use the fact that 
    \begin{equation}\label{2 leq mu leq k-2}
        \begin{aligned}
            & \frac{\ka_p S_\mu^{pp}}{S_\mu}\geq 1-\frac{S_\mu(\lam|p)}{S_\mu}
            \geq 1-C\frac{\de_{\mu+1}}{\de_\mu}, \quad \te{for}\  p\leq \mu,\\
            & \frac{P_m}{\ka_1^m}\geq 1+\de_\mu^m, \quad \te{and} \ \z|\frac{\ka_1S_\mu^{qq}}{S_\mu}\x|\leq C\ka_1 \frac{\lam_1 \cdots \lam_{\mu-1}}{\lam_1 \cdots \lam_\mu}\leq \frac{C}{\de_\mu}, \quad \te{for} \ q>\mu.
        \end{aligned}
    \end{equation}
    
    Therefore, Combining \eqref{eq B_1+...} with \eqref{eq A_1 4} and letting $\es$, $\e$ and $\de_{\mu+1}$ be sufficiently small, we have
    \begin{align*}
        \frac{P_{m}^{2}}{m}& \left[A_{1}+B_{1}+C_{1}+D_{1}-\z(1+\frac{\e}{m}\x)E_{1}\right] \\
        \geq&  \left(1+\frac{\de_\mu^{m}}{2}\right) \ka_{1}^{2 m-3}S_k \sum_{p\leq \mu} u_{p p 1}^{2}-C P_{m} \ka_{1}^{m-3}\frac{ S_{k}}{\de_{\mu}^{2}} \sum_{q>\mu} u_{qq1}^{2} \\
        & -(1+\e)\ka_{1}^{2 m-3} S_{k}u_{111}^{2} +2 P_{m} \ka_{1}^{m-2} \sum_{j\ne 1} S_{k}^{j j} u_{j j 1}^{2} \\
        & +(1+\e)\left(S_{k}-\ka_{1} S_{k}^{11}\right) \ka_{1}^{2 m-3} u_{111}^{2}+2 P_{m} \sum_{j\ne 1} S_{k}^{j j} \sum_{l=1}^{m-4} \ka_{1}^{m-2-l} \ka_{j}^{l} u_{j j 1}^{2} \\
        \geq&  (1+\e)\left(S_{k}-\ka_{1} S_{k}^{11}\right) \ka_{1}^{2 m-3} u_{111}^{2} +\sum_{q >\mu} P_{m} \ka_{1}^{m-3}\left[2 \ka_{1} S_{k}^{q q}-C \frac{S_{k}}{\de_\mu^2}\right] u_{q q 1}^{2} \\
        & + 2 P_{m} \sum_{j\ne 1} S_{k}^{jj} \sum_{k=1}^{m-4} \ka_{1}^{m-2-l} \ka_{j}^{l} u_{j j1}^{2}.
    \end{align*}
    Using Lemma \ref{lem 2.3} (b), for any $q>\mu$, $\ka_{1} S_{k}^{q q}  \geq \frac{\lam_{\mu+1}}{\de_{\mu+1}} S_{k}^{\mu+1 \ \mu+1} \geq \frac{\ttt S_{k}}{\delta_{\mu+1}}$. If $\e$ and $\de_{\mu+1}$ are sufficiently small, we have
    \begin{align}
        \frac{P_{m}^{2}}{m}\left[A_{1}+B_{1}+C_{1}+D_{1}-\z(1+\frac{\e}{m}\x)E_{1}\right]  \geq &(1+\e)\left(S_{k}-\ka_{1} S_{k}^{11}\right) \ka_{1}^{2 m -3} u_{111}^{2} \label{eq A_1+B_1+... 2} \\
        & +2 P_{m} \sum_{j \neq 1} S_{k}^{jj} \sum_{l=1}^{m-4} \ka_{1}^{m - 2-l} \ka_{j}^{l} u_{j j 1}^{2}. \nonumber 
    \end{align}

    \textbf{Case 3.} For $\mu=k-1$, that is, $\lambda_{k-1} \geq \delta_{k-1} \lambda_{1}$. By Lemma \ref{lem 2.5} (b), $|\lam_k|\leq c_0 K_0$. From the discussion in Case 2,
    \begin{align*}
        & \frac{P_{m}^{2}}{m} A_{1} \geq P_{m}\ka_{1}^{m-1} \frac{S_k}{S_{k-1}^2}\sum_{j} \z(S_{k-1}^{j j} u_{j j 1}\x)^{2}+P_{m} \ka_{1}^{m-1} \frac{S_{k}}{S_{k-1}^{2}}\left[S_{k-1}^{p p} S_{k-1}^{qq}-S_{k-1} S_{k-1}^{pp, q q}\right] u_{p p 1} u_{q q 1},
    \end{align*}
    and 
    $$
    S_{k-1}^{p p} S_{k-1}^{q q}-S_{k-1} S_{k-1}^{p p, q q}=S_{k-2}^{2}(\lam|pq)-S_{k-1}(\lam|pq) S_{k-3}(\lam|pq).
    $$
    
    We also claim that for any $p, q \leq \mu=k-1$, and $\lam_1$ is sufficiently large,
    $$
    |S_{k-2}^{2}(\lam|pq)-S_{k-1}(\lam|pq) S_{k-3}(\lam|pq)|\leq \es S_{k-1}^{p p} S_{k-1}^{q q},
    $$ 
    where small constant $\es$ depends on $\lam_1$.
    
    Since $\lam \in \gaa_{k-1}$, it follows that 
    $$
    S_{k-1}^{p p}\geq \sigma_{k-2}(\lam|p)\geq \frac{\lambda_{1} \cdots \lambda_{k-1}-C\lam_1\cdots \lam_{k-2}\z[|\lam_k|+K_0\x]}{\lambda_{p}}\geq \frac{\lambda_{1} \cdots \lambda_{k-1}}{2\lambda_{p}}.
    $$
    On the other hand, 
    \begin{align*}
        & |S_{k-1}(\lam|pq)|=|\si_{k-1}(\lam|pq)+\al \si_{k-2}(\lam|pq)|\leq C\frac{\lam_1 \cdots \lam_{k-1}}{\lam_p \lam_q} \z[\lam_k^2+K_0^2\x],\\
        & |S_{k-2}(\lam|pq)|\leq C\frac{\lam_1 \cdots \lam_{k-1}}{\lam_p \lam_q} \z[|\lam_k|+K_0\x], \quad  |S_{k-3}(\lam|pq)|\leq C\frac{\lam_1 \cdots \lam_{k-1}}{\lam_p \lam_q}.
    \end{align*}
    Hence 
    $$
    |S_{k-2}^{2}(\lam|pq)-S_{k-1}(\lam|pq) S_{k-3}(\lam|pq)| \leq \frac{CK_0}{\de_{k-1}^2\lam_1^2} S_{k-1}^{p p}S_{k-1}^{qq}.
    $$
    Similar to the analysis in Case 2, we obtain $\frac{P_m}{\ka_1^m}\geq 1+\de_{k-1}^m$ and for any $p\leq k-1$,
    \begin{align*}
        \frac{\ka_{p} S_{k-1}^{p p}}{S_{k-1}} 
        \geq 1-C\frac{\lam_1 \cdots \lam_{k-1}[|\lam_k|+K_0]/\lam_p}{\lam_1 \cdots \lam_{k-1}} \geq 1-\frac{CK_0}{\de_{k-1}\lam_1}.
    \end{align*}
    Furthermore, for any $q>k-1$, 
    $$
    \z|\frac{\ka_{1} S_{k-1}^{q q}}{S_{k-1}} \x| \leq \ka_1\frac{C \lam_1\cdots \lam_{k-2}}{c\lam_1 \cdots \lam_{k-1}}\leq \frac{C}{\de_{k-1}}.
    $$ 
    Here, we use the fact that $|\lam_k|\leq CK_0$. 
    Summarily, if $\lam_1$ is sufficiently large, we have
    \begin{equation*}
        \frac{P_{m}^2A_1}{m} \geq \z(1+\frac{\de_{k-1}^m}{2}\x) \ka_1^{2m-3}S_k \sum_{p\leq k-1} u_{pp1}^2-CP_m\ka_1^{m-3}\frac{S_k}{\de_{k-1}^2}\sum_{q>k-1} u_{qq1}^2.
    \end{equation*}
    Substituting this into \eqref{eq B_1+...} and letting $\e$ be small yields
    \begin{align*}
        \frac{P_{m}^{2}}{m}\left[A_{1}+B_{1}+C_{1}+D_{1}-\z(1+\frac{\e}{m}\x)E_{1}\right]  \geq &(1+\e) \left[S_{k}-\ka_{1} S_{k}^{11}\right] \ka_{1}^{2 m-3} u_{111}^{2}  \\
        & +P_{m} \ka_{1}^{m-3} \sum_{q > k-1}\left[2 \ka_{1} S_{k}^{q q}- \frac{C}{\delta_{k-1}} S_{k}\right] u_{q q 1}^{2} \\
        & +2 P_{m} \sum_{j\ne 1} S_k^{jj} \sum_{l=1}^{m-4} \ka_{1}^{m-2-l} \ka_{j}^l u_{j j 1}^{2}.
    \end{align*}

    In the end, we prove that 
    $$\frac{\ka_{1} S_{k}^{qq}}{S_{k}} \gg 1,\quad \te{for any } q>k-1.
    $$
    First, it is important to note that if $\lam_1$ is sufficiently large, then
    $$
    S_{k}^{q q}\geq \si_{k-1}(\lam|q)\geq \lambda_{1} \cdots \lambda_{k-1}-C \lambda_{1} \cdots \lambda_{k-2}\left[|\lambda_{k}|+K_{0}\right] \geq \frac{\lambda_{1} \cdots \lambda_{k-1}}{2},
    $$
    and
    $
    0<c\leq S_{k} \leq C \lambda_{1} \cdots \lambda_{k-1}\left[|\lambda_{k}| +K_{0}\right]\leq C \lambda_{1} \cdots \lambda_{k-1} K_0.
    $ Hence
    $$
    \frac{\ka_{1} S_{k}^{qq}}{S_{k}} \geq C\lam_1  \gg 1, \quad \text{if } \lam_1 \text{ is large}.
    $$
    Therefore, we have
    \begin{align}
        \frac{P_{m}^{2}}{m}\left[A_{1}+B_{1}+C_{1}+D_{1}-\z(1+\frac{\e}{m}\x)E_{1}\right]\geq  & (1+\e)\left[S_{k}-\ka_{1} S_{k}^{11}\right] \ka_{1}^{2 m-3} u_{111}^{2} \label{eq A_1+B_1+... 3}\\
        & +2 P_{m} \sum_{j\ne 1} S_{k}^{j j} \sum_{l=1}^{m-4} \ka_{1}^{m-2-l} \ka_{j}^l  u_{j j 1}^{2}.\nonumber
    \end{align}
    By combining equations \eqref{eq A_1+B_1+... 1}, \eqref{eq A_1+B_1+... 2} and \eqref{eq A_1+B_1+... 3}, we complete the proof.
\end{proof}

In the next lemma,  we focus on the term 
$$\left(S_{k}-\ka_{1} S_{k}^{11}\right) \ka_{1}^{2 m-3} u_{111}^{2}+2 P_{m} \sum_{j\ne 1} S_{k}^{j j} \sum_{l=1}^{m-4} \ka_{1}^{m-2-l} \ka_{j}^l  u_{j j 1}^{2}.$$ 
\begin{lemma}\label{lem 3.3}
    For sufficiently large $m$, we have
    \begin{align}
        (1+\e)&\left(S_{k}-\ka_{1} S_{k}^{11}\right) \ka_{1}^{2 m-3} u_{111}^{2} +2 P_{m} \sum_{j\ne 1} S_{k}^{j j} \sum_{l=1}^{m-4} \ka_{1}^{m-2-l} \ka_{j}^l  u_{j j 1}^{2} \\
        & \geq \min\z\{0,C \z[\frac{M^2}{m^2 L^2}+\frac{A^2}{m^2}+\frac{B^2}{m^2 \lam_1^2}\x](S_{k}-\ka_{1} S_{k}^{11})  P_{m}^2 \lam_1\x\}, \nonumber
    \end{align}
    Here, $\e$ is the small constant defined in Lemma $\ref{lem A_1+... geq ...}$.
    As a consequence, \eqref{eq A_1+B_1+... 0} becomes
    \begin{align}\label{eq A_1+B_1+... 4}
        A_1+B_1& +C_1+D_1-\z(1+\frac{\e}{m}\x)E_1 \\
        & \geq \min\z\{0,C \z[\frac{M^2}{m L^2}+\frac{A^2}{m}+\frac{B^2}{m \lam_1^2}\x](S_{k}-\ka_{1} S_{k}^{11})  \lam_1\x\}.\nonumber 
    \end{align}
\end{lemma}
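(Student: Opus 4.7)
The natural split is on the sign of $S_{k}-\ka_{1}S_{k}^{11}$. If $S_{k}-\ka_{1}S_{k}^{11}\ge 0$ then both summands on the left are already non\-negative, while the right side equals $\min\{0,\text{(non-negative)}\}=0$, so the inequality is trivial. The real work is the opposite case $S_{k}-\ka_{1}S_{k}^{11}<0$; write $\Lambda=\ka_{1}S_{k}^{11}-S_{k}>0$. The claim then amounts to showing
\begin{equation*}
(1+\e)\Lambda\,\ka_{1}^{2m-3}u_{111}^{2}\ \le\ C\Big[\tfrac{M^{2}}{m^{2}L^{2}}+\tfrac{A^{2}}{m^{2}}+\tfrac{B^{2}}{m^{2}\lam_{1}^{2}}\Big]\Lambda\,P_{m}^{2}\lam_{1}\ +\ 2P_{m}\sum_{j\ne 1}S_{k}^{jj}\sum_{l=1}^{m-4}\ka_{1}^{m-2-l}\ka_{j}^{l}u_{jj1}^{2},
\end{equation*}
and this is the inequality I would prove directly.

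The plan is to extract $u_{111}$ from the critical-point condition \eqref{eq first derivative of test function} at $i=1$, giving
\begin{equation*}
\ka_{1}^{m-1}u_{111}=-\frac{P_{m}}{m}\Big[M\frac{u_{1}}{u}+Au_{1}u_{11}+Bx_{1}\Big]-\sum_{j\ne 1}\ka_{j}^{m-1}u_{jj1}.
\end{equation*}
Applying $(a+b)^{2}\le 2a^{2}+2b^{2}$ and dividing by $\ka_{1}$ splits $\ka_{1}^{2m-3}u_{111}^{2}$ into a \emph{gradient piece} and a \emph{cross piece}. For the gradient piece, I would bound $|u_{1}|\le C$, $|x_{1}|\le\operatorname{diam}\Omega$, $u_{11}\le \ka_{1}$, and use the working hypothesis $(-u)\lam_{1}\ge L$ to get $1/u^{2}\le C\ka_{1}^{2}/L^{2}$. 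Multiplying by $(1+\e)\Lambda$ and using $\ka_1\le 2\lam_1$ for large $\lam_1$ produces exactly the first summand on the target right side.

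The core of the argument is absorbing the cross piece $\tfrac{2(1+\e)\Lambda}{\ka_{1}}\bigl(\sum_{j\ne 1}\ka_{j}^{m-1}u_{jj1}\bigr)^{2}$ into the non-negative second summand on the target right side. I would apply weighted Cauchy-Schwarz with weight $w_{j}=S_{k}^{jj}\sum_{l=1}^{m-4}\ka_{1}^{m-2-l}\ka_{j}^{l}$, yielding
\begin{equation*}
\Big(\sum_{j\ne 1}\ka_{j}^{m-1}u_{jj1}\Big)^{2}\le \Big(\sum_{j\ne 1}\frac{\ka_{j}^{2m-2}}{w_{j}}\Big)\Big(\sum_{j\ne 1}w_{j}u_{jj1}^{2}\Big).
\end{equation*}
The second factor is already the target term divided by $2P_{m}$. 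For the first factor I would use $S_{k}^{11}\le S_{k}^{jj}$ for $j\ne 1$ so that $\Lambda\le \ka_{1}S_{k}^{11}\le\ka_{1}S_{k}^{jj}$, combined with $\sum_{l=1}^{m-4}\ka_{1}^{m-2-l}\ka_{j}^{l}\ge \ka_{1}^{m-3}\ka_{j}$ (the $l=1$ term) and the elementary bound $\sum_{j\ne 1}\ka_{j}^{2m-3}\le \ka_{1}^{m-3}\sum_{j}\ka_{j}^{m}\le \ka_{1}^{m-3}P_{m}$, all of which together give $\tfrac{\Lambda}{\ka_{1}}\sum_{j\ne 1}\ka_{j}^{2m-2}/w_{j}\le P_{m}$.

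The main obstacle is that this crude Cauchy-Schwarz absorbs the cross piece only up to a leftover factor $(1+\e)$, so some care is needed to buy back the constant. My plan is to exploit the remaining freedom in the sum over $l\in[1,m-4]$: by retaining, say, the two endpoint terms $l=1$ and $l=m-4$, or by invoking the smallness of $\e$ relative to $m$, the extra factor can be absorbed into the weights. Once the first part is proved, the consequence \eqref{eq A_1+B_1+... 4} follows by dividing the conclusion through the established estimate \eqref{eq A_1+B_1+... 0} of Lemma \ref{lem A_1+... geq ...} by $P_{m}^{2}/m$, which scales both the gradient bracket and the $\min$ appropriately.
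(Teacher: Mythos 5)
Your overall strategy coincides with the paper's: split on the sign of $S_k-\ka_1S_k^{11}$, dispose of the nonnegative case trivially, and in the negative case solve \eqref{eq first derivative of test function} for $\ka_1^{m-1}u_{111}$, square, and absorb the cross term $\bigl(\sum_{j\ne 1}\ka_j^{m-1}u_{jj1}\bigr)^2$ into the $l$-sum while the gradient terms produce the $\min$. The gradient piece and the consequence \eqref{eq A_1+B_1+... 4} are handled correctly.

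The gap is exactly where you flag it, and your proposed repairs do not close it. Your weighted Cauchy--Schwarz keeps only the $l=1$ term of $\sum_{l=1}^{m-4}\ka_1^{m-2-l}\ka_j^l$, which makes the estimate $\frac{\Lambda}{\ka_1}\sum_{j\ne1}\ka_j^{2m-2}/w_j\le P_m$ exactly tight, leaving the unabsorbed factor $(1+\e)$. Retaining the endpoint $l=m-4$ as well only improves the weight by a factor $1+(\ka_j/\ka_1)^{m-5}$, and $(\ka_j/\ka_1)^{m-5}$ can be arbitrarily small, so this does not buy back $(1+\e)^{-1}$ uniformly; nor does ``smallness of $\e$ relative to $m$'' by itself, since your bound has no $m$-dependence left in it. Two concrete repairs: (i) the paper's route, which is to use \emph{all} $m-4$ terms of the $l$-sum --- since $P_m\ge\ka_1^m$ gives $P_m\,\ka_1^{m-2-l}\ka_j^l\ge\ka_j^{2m-2}$ for every $l$, one gets $2P_m\sum_{j\ne1}S_k^{jj}\sum_{l}\ka_1^{m-2-l}\ka_j^lu_{jj1}^2\ge 2(m-4)S_k^{11}\sum_{j\ne1}\ka_j^{2m-2}u_{jj1}^2$, which dominates the $(n+2)S_k^{11}\sum_{j\ne1}\ka_j^{2m-2}u_{jj1}^2$ coming from a crude Cauchy inequality with fixed constant $n+2$ once $m$ is large (this is precisely why the lemma is stated ``for sufficiently large $m$'', and it avoids your sharp Cauchy--Schwarz altogether); or (ii) keep your argument but replace $\sum_{j\ne1}\ka_j^m\le P_m$ by $\sum_{j\ne1}\ka_j^m\le P_m-\ka_1^m\le(1-\tfrac1n)P_m$, which yields the factor $(1+\e)(1-\tfrac1n)\le1$ for $\e\le\tfrac1{n-1}$. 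With either repair your proof is complete; as written, the absorption step fails.
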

\begin{proof}
    Suppose $S_{k}-\ka_{1} S_k^{11} \geq 0$, in this case, the proof is complete. We now consider the case when $S_{k}-\ka_{1} S_k^{11} < 0$. Applying \eqref{eq first derivative of test function} and the Cauchy inequality, we obtain
    \begin{align*}
        \ka_{1}^{2 m-2} u_{111}^{2} & \leq(n+2)\left[\sum_{j\ne 1} \ka_{j}^{2 m-2} u_{j j 1}^{2}+P_{m}^{2} \frac{M^{2}}{m^{2}} \frac{u_{1}^{2}}{u^{2}}+P_{m}^{2} \frac{A^{2}}{m^{2}} u_{1}^{2} u_{11}^{2}+P_{m}^{2} \frac{B^{2}}{m^{2}} x_{1}^{2}\right] \\
        & \leq(n+2) \sum_{j\ne 1} \ka_{j}^{2m-2} u_{j j 1}^{2}+C  \z[\frac{M^2}{m^2 L^2}+\frac{A^2}{m^2}+\frac{B^2}{m^2 u_{11}^2}\x] P_{m}^{2}u_{11}^2.
    \end{align*}
    Multiplying it by $\frac{S_{k}-\ka_{1} S_{k}^{11}}{\ka_{1}}$, we obtain
    \begin{align*}
        (S_k-\ka_1S_k^{11})\ka_1^{2m-3}u_{111}^2  \geq& -(n+2)  S_k^{11} \sum_{j\ne 1}\ka_j^{2m-2}u_{jj1}^2 \\
        & +C\z[\frac{M^2}{m^2 L^2}+\frac{A^2}{m^2}+\frac{B^2}{m^2 u_{11}^2}\x]\frac{S_k-\ka_1 S_k^{11}}{\ka_1} P_m^2u_{11}^2.
    \end{align*}
    Therefore
    \begin{align*}
        \left(S_{k}-\ka_{1} S_{k}^{11}\right)&  \ka_{1}^{2 m-3} u_{111}^{2}+2 P_{m}\sum_{j\ne 1}  S_{k}^{jj} \sum_{l=1}^{m-4} \ka_{j}^{l} \ka_{1}^{m-2-l} u_{jj1}^{2} \\
        \geq& -(n+2)  S_{k}^{11} \sum_{j \neq 1} \ka_{j}^{2 m-2} u_{j j 1}^{2}+2(m-4)\sum_{j \neq 1}  S_{k}^{j j} \ka_{j}^{2 m-2} u_{j j 1}^{2} \\
        & +C \z[\frac{M^2}{m^2 L^2}+\frac{A^2}{m^2}+\frac{B^2}{m^2 u_{11}^2}\x] \frac{S_{k}-\ka_{1} S_{k}^{11}}{\ka_{1}}  P_{m}^2 u_{11}^{2},\\
        \geq&  C \z[\frac{M^2}{m^2 L^2}+\frac{A^2}{m^2}+\frac{B^2}{m^2 u_{11}^2}\x](S_{k}-\ka_{1} S_{k}^{11})  P_{m}^2 \lam_1,
    \end{align*}
    provided $m$ is sufficiently large.
\end{proof}

\begin{lemma}\label{lem 3.4}
    Suppose that the constants $\lam_1$, $L$, $M$, $m$, $A$, and $B$ satisfy
    \begin{equation}\label{constants condition 1}
        L \gg M \gg B \gg A \gg m \gg 1,\   mB \gg A^2,\  \te{and } \lam_1 \te{ sufficiently large.}
    \end{equation}
    Then, the following inequality holds:
    \begin{equation}
        \frac{B}{2}\sum_i S_k^{ii}+\min\z\{0,C \z[\frac{M^2}{m L^2}+\frac{A^2}{m}+\frac{B^2}{m \lam_1^2}\x](S_{k}-\ka_{1} S_{k}^{11}) \lam_1\x\}\geq 0.
    \end{equation}
\end{lemma}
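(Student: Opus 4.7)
First, observe that the claim splits on the sign of $S_k-\ka_1 S_k^{11}$. If $S_k\geq \ka_1 S_k^{11}$, then the minimum in the statement is $0$ and the inequality reduces to $\tfrac{B}{2}\sum_i S_k^{ii}\geq 0$, which is immediate from the ellipticity $S_k^{ii}>0$ in $\tilde\gaa_k$ (Lemma \ref{lem 2.3}(a)). Hereafter I assume $\ka_1 S_k^{11}>S_k$.

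The central ingredient of the plan is the structural estimate
$$
(\ka_1 S_k^{11}-S_k)\lam_1\leq C_0\sum_{i}S_k^{ii},
$$
with a constant $C_0$ depending only on $n,k,\al,K_0,\|f\|_\infty,\inf f$. To prove it I use the identity $S_k=\lam_1 S_k^{11}+S_k(\lam|1)$ (the Notation at the start of Section \ref{sec 2}), which rearranges to
$$
(\ka_1 S_k^{11}-S_k)\lam_1=K_0 S_k-\ka_1 S_k(\lam|1).
$$
The first term is bounded by $K_0\|f\|_\infty$. For the second, since the ``large range'' $\{2,\dots,k-1\}$ contains only $k-2$ indices, every monomial of $\si_{k-1}(\lam|1)$ must involve at least one index from $\{k,\dots,n\}$ and every monomial of $\si_k(\lam|1)$ at least two; by semi-convexity and Lemma \ref{lem 2.5}(b), for such indices $|\lam_j|\leq cK_0$, giving $|S_k(\lam|1)|\leq CK_0\,\lam_2\cdots\lam_{k-1}$. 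A parallel analysis of $\sum_i S_k^{ii}=(n-k+1)\si_{k-1}+\al(n-k+2)\si_{k-2}$ shows that the full monomial $\lam_1\lam_2\cdots\lam_{k-1}$ sits inside $\si_{k-1}$ with no small factor, so $\sum_i S_k^{ii}\geq c_1\lam_1\lam_2\cdots\lam_{k-1}$ whenever this product is not too small; in the remaining regime, Lemma \ref{lem 2.5}(a) supplies $\si_{k-1}\geq c_0>0$, which already absorbs the bounded right-hand side. These comparisons combine to give the structural estimate.

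Granted the structural estimate, the lemma reduces to verifying
$$
\frac{B}{2}\geq CC_0\left[\frac{M^2}{mL^2}+\frac{A^2}{m}+\frac{B^2}{m\lam_1^2}\right],
$$
which follows termwise from the hypotheses: the first term is killed by $L\gg M$, the second by $mB\gg A^2$, and the third by taking $\lam_1$ sufficiently large. The delicate step is the structural estimate itself, whose proof needs a case analysis along the relative sizes of $\lam_2,\dots,\lam_{k-1}$ (in the spirit of Lemma \ref{lem A_1+... geq ...}) together with careful accounting of the $K_0$-weighted lower-order contributions both to $|S_k(\lam|1)|$ and to $\sum_i S_k^{ii}$; this is what I expect to be the main obstacle.
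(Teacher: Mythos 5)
Your overall strategy coincides with the paper's: reduce to the case $\ka_1 S_k^{11}>S_k$, compare $(\ka_1 S_k^{11}-S_k)\lam_1$ with $\sum_i S_k^{ii}$ through products of eigenvalues, and close with the hierarchy \eqref{constants condition 1} (your termwise verification of $\frac{B}{2}\geq C C_0\big[\frac{M^2}{mL^2}+\frac{A^2}{m}+\frac{B^2}{m\lam_1^2}\big]$ is exactly how the paper concludes). The identity $(\ka_1 S_k^{11}-S_k)\lam_1=K_0S_k-\ka_1 S_k(\lam|1)$ is correct and is a harmless variant of the paper's $S_k-\ka_1S_k^{11}=S_k(\lam|1)-K_0S_{k-1}(\lam|1)$.

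There is, however, a genuine gap in your proof of the structural estimate, precisely at the point you flag as the main obstacle. The bound $|S_k(\lam|1)|\leq CK_0\,\lam_2\cdots\lam_{k-1}$ fails when $\lam_{k-1}\ll K_0$: monomials of $\si_{k-1}(\lam|1)$ containing two or more indices from $\{k,\dots,n\}$, such as $\lam_2\cdots\lam_{k-2}\lam_k\lam_{k+1}$, are of size $K_0^2\lam_2\cdots\lam_{k-2}$, which is not controlled by $K_0\lam_2\cdots\lam_{k-1}$ in that regime. Consequently your dichotomy ("$\lam_1\cdots\lam_{k-1}$ not too small" versus "right-hand side bounded, use $\si_{k-1}\geq c_0$") does not cover the regime $\lam_{k-1}\lesssim K_0$ with $\lam_1\cdots\lam_{k-2}\gg 1$: there the quantity $(\ka_1 S_k^{11}-S_k)\lam_1$ is of order $K_0^3\lam_1\cdots\lam_{k-2}$, hence unbounded, the fallback $\si_{k-1}\geq c_0$ gives only a constant lower bound on $\sum_i S_k^{ii}$, and $\si_{k-1}\geq c_1\lam_1\cdots\lam_{k-1}$ need not hold. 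The missing ingredient --- and the reason the paper splits on $\lam_{k-1}\leq C_0K_0$ versus $\lam_{k-1}\geq L_0K_0$ rather than on the size of the product --- is the term $\al(n-k+2)\si_{k-2}$ in $\sum_i S_k^{ii}$, which satisfies $\si_{k-2}\geq c\,\lam_1\cdots\lam_{k-2}$ for $\lam\in\gaa_{k-1}$ and therefore dominates the left-hand side in the bad regime. This is exactly where the hypothesis $\al>0$ enters the lemma; without the $\si_{k-2}$ contribution the estimate would not close.
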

\begin{proof}
    Without loss of generality, assume $S_k-\ka_1 S_k^{11}<0$. It follows from Lemma \ref{lem 2.5} (b) that $|\lam_k|\leq c_0 K_0$. Moreover, $\sum_i S_k^{ii}=(n-k+1)\si_{k-1}+\al(n-k+2)\si_{k-2}$ and 
    $$
    0>S_k -\ka_1 S_k^{11}=(S_k- \lam_1 S_k^{11})-K_0 S_k^{11}=S_k(\lam|1)-K_0S_{k-1}(\lam|1).
    $$
    
    If $\lam_{k-1}\leq C_0 K_0$ for some $C_0>0$, then $S_k -\ka_1 S_k^{11} \geq -C\lam_2 \cdots \lam_{k-2}K_0^3$ and 
    \begin{align*}
        \frac{B}{2}\sum_i S_k^{ii}&\geq \frac{\al B}{2}(n-k+2)\si_{k-2} \geq \frac{\al B}{2}(n-k+2)\lam_1 \cdots \lam_{k-2}\\ 
        & \geq C\z[\frac{M^2}{m L^2}+\frac{A^2}{m}+\frac{B^2}{m \lam_1^2}\x]\lam_1|S_k-\ka_1S_k^{11}|.
    \end{align*}
    This follows because $\lam \in \gaa_{k-1}$, and we can choose the constants $\lam_1$, $L$, $M$, $m$, $A$, and $B$  such that they satisfy \eqref{constants condition 1}.

    If $\lam_{k-1}\geq L_0K_0$ for a sufficiently large $L_0>0$, then $|S_k-\ka_1 S_k^{11}|\leq C \lam_2 \cdots \lam_{k-1} K_0^2$. Thus, we have
    \begin{align*}
        \frac{B}{2}\sum_i S_k^{ii}&\geq \frac{ B}{2}(n-k+1)\si_{k-1} \geq \frac{ B}{2}(n-k+1)\z[\lam_1 \cdots \lam_{k-1}-C\lam_1 \cdots \lam_{k-2}K_0\x]\\ 
        & \geq \frac{ B}{4}(n-k+1) \lam_1 \cdots \lam_{k-1} \\
        & \geq C\z[\frac{M^2}{m L^2}+\frac{A^2}{m}+\frac{B^2}{m \lam_1^2}\x]\lam_1 |S_k-\ka_1S_k^{11}|.
    \end{align*}
    Here, the constants $\lam_1$, $L$, $M$, $m$, $A$, and $B$ must also satisfy \eqref{constants condition 1}.
\end{proof}

In the end, we continue to complete the proof of Theorem \ref{thm main thm: elliptic}.
\begin{proof}[\textbf{Proof of Theorem $\ref{thm main thm: elliptic}$}]
    From the previous analysis, equation \eqref{inequality 3} becomes:
    \begin{align}
        0 \geq&   \frac{1}{m} \sum_{i \neq 1} E_{i}+\sum_i \z[M \frac{S_{k}^{ii} u_{i i}}{u}-M \frac{S_{k}^{ii} u_{i}^{2}}{u^{2}}+A S_{k}^{i i} u_{i i}^{2}\x] \label{inequality 4} \\
        & +\frac{B}{2} \sum_{i} S_{k}^{i i}-C_{A,B}-Cm u_{11}.\nonumber
    \end{align}

    Now, we consider the term $- \frac{S_k^{ii} u_{i}^{2}}{u^{2}}$. According to \eqref{eq first derivative of test function} and the Cauchy inequality, for $i \neq 1$,
    \begin{align*}
        -S_{k}^{i i}\left(\frac{u_{i}}{u}\right)^{2}& =-S_{k}^{i i}\left[ \frac{m\sum_j \ka_{j}^{m-1} u_{jji}}{MP_{m}}+\frac{A}{M} u_{i} u_{i i}+\frac{B}{M} x_{i}\right]^{2} \\
        & \geq-(1+\es_0)\frac{m^{2}}{M^{2}P_{m}^{2}} S_k^{i i} \left(\sum_j \ka_{j}^{m-1}  u_{jji}\right)^{2}-C_{\es_0}S_k^{ii} \left[\frac{A^{2}}{M^{2}} u_{ii}^{2}+\frac{B^2}{M^2}\right],
    \end{align*}
    Here $\es_0$ can be arbitrarily small. Hence
    \begin{align}
        \frac{1}{m}\sum_{i\ne 1}E_i-M\sum_{i\ne 1}S_k^{ii}\z(\frac{u_i}{u}\x)^2\geq&  \z(1-(1+\es_0)\frac{m}{M}\x) \frac{m}{P_m^2}\sum_{i\ne 1}S_k^{ii}\z(\sum_j \ka_j^{m-1} u_{jji}\x)^2 \label{inequality 5}\\
        & -C\frac{A^2}{M}\sum_{i \ne 1} S_k^{ii}u_{ii}^2-C\frac{B^2}{M}\sum_{i \ne 1}S_k^{ii}. \nonumber
    \end{align}

    Next, we consider the expression $\sum_i \frac{S_k^{ii}u_{ii}}{u}$. Without loss of generality, we may assume that $\sum_{i} S_{k}^{i i} u_{i i} >0$. Hence
    $$
    0<\sum_i S_{k}^{i i} u_{i i}=\sum_{i}\left(\si_{k}^{i i}+\alpha \sigma_{k-1}^{ii}\right) u_{i i}
    \leq k S_{k} \leq C.
    $$
    This implies that $M\sum_i \frac{S_k^{ii}u_{ii}}{u}\geq \frac{CM}{u}$. By combining \eqref{inequality 4}, \eqref{inequality 5} and the assumption that $(-u)u_{11}\geq L$, we obtain 
    \begin{align}
        0 \geq&  \left(1-(1+\es_0)\frac{ m}{M}\right)\frac{m}{P_{m}^2} \sum_{i\ne 1} S_{k}^{ii}\left(\sum_{j} \ka_{j}^{m-1} u_{jji}\right)^{2}\label{inequality 6} \\
        & + \sum_{i \neq 1} A\left(1-C\frac{ A}{M}\right) S_{k}^{i i} u_{ii}^{2}+\z(A-C\frac{M}{L^2}\x) S_{k}^{11} u_{11}^{2}\nonumber\\
        & +B\z(\frac{1}{2}-C\frac{B}{M}\x)\sum_{i\ne 1}S_k^{ii}+\frac{B}{2}S_k^{11}-C_{A,B}-Cm u_{11} \nonumber\\
        \geq&  \frac{A}{2}\ttt S_k u_{11} -Cm u_{11}-C(A+B) \nonumber \\
        \geq&  \frac{A}{4}\ttt S_k u_{11}-C(A+B),\nonumber
    \end{align}
    where we have used $S_k^{11}u_{11}\geq \ttt S_k$ in the above inequalities, and the constants $L$, $M$, $m$, $A$, $B$ satisfy \eqref{constants condition 1}. Thus there exists $\ga_0$ such that
    $$
    \ga_0 \log (-u(x)) +\log u_{11}(x) \leq   C, \quad \forall \  x \in \ooo,
    $$ where the constant $\ga_0$ and $C$ depend on $n$, $k$, $\inf f$, $\|f\|_{C^{1,1}}$, $\|u\|_{C^1}$, and $\operatorname{diam}(\ooo)$. Hence, we complete the proof of Theorem \ref{thm main thm: elliptic}.
\end{proof}
\begin{remark}
    Due to the lack of \cite[(2.7)]{Liu-Ren-2023-JFA}, we cannot derive the estimate for $\gamma_{0}=1+\epsilon$.
\end{remark}
\begin{remark}
    We can also derive the estimate bu using $\lambda_{1}$ instead of $P_{m}$, however, we use $P_{m}$ since it can produce more convexity.
\end{remark}

\section{Pogorelov estimates for the parabolic equations}\label{sec 4}
\setcounter{equation}{0}

In this section, we investigate the parabolic version of the Pogorelov estimates and prove Theorems \ref{thm main thm: parabolic 1} and \ref{thm main thm: parabolic 2} separately.

\subsection{Proof of Theorem \ref{thm main thm: parabolic 1}}\label{subsec 4.1}

Throughout this subsection, we assume that $u$ is a $(k-1)$-convex-monotone solution of equation \eqref{main parabolic eq}, that is, 
\begin{equation}
    \lam(D^2 u)\in \gaa_{k-1}, \quad  \te{and}\quad 0<m_1 \leq -u_t\leq m_2.
\end{equation}

Firstly, $u\leq 0$ in $\mathcal{O}$ by the maximum principle. We now consider the corresponding test function:
$$
    \phi=\log \lam_{\max}+\bb  \log (-u)+\frac{\es}{2} |Du|^{2},
$$ 
where $\lam_{max}$ is the largest eigenvalue, $\bb=1+2\de>0$, and $\de$ and $\es$ are small parameters.

Assume that $\p$ achieves its maximum at $(x_{0},t_0)$, where $x_0 \in \mathcal{O}(t_0)$. We can choose an orthonormal frame such that $D^2 u(x_0,t_0)$ is diagonalized. Furthermore, we may assume $\lam_{max}$ has multiplicity $l$, i.e.,
$$
\lam_{1}=\cdots =\lam_{l}>\lam_{l+1}\geq \cdots\geq  \lam_{n}, \quad \te{at } (x_0,t_0).
$$
In the remainder of the proof, all computations will be carried out at $(x_{0},t_0)$. According to \cite[Lemma 5]{BCD}, we obtain
\begin{align*}
    & \de_{kj}(\lam_1)_i=u_{kji},\quad 1\leq k,j\leq l, \\
    & (\lam_{1})_{ii}\geq u_{11ii}+2\sum_{j>l}\frac{u_{1ji}^2}{\lam_1-\lam_j},
\end{align*}
in the viscosity sense.
Then, at $(x_0,t_0)$
\begin{equation}\label{eq time derivative of test function: parabolic}
    0\leq \p_t =\frac{u_{11t}}{u_{11}}+\bb \frac{u_t}{u}+\es u_j u_{jt},
\end{equation}
\begin{equation}\label{eq first derivative of test function: parabolic}
    0=\phi_{i}=\frac{u_{11i}}{u_{11}}+\bb  \frac{u_{i}}{u}+\es u_i u_{ii},
\end{equation}
\begin{align}
    & 0 \geq \phi_{i i}\geq \frac{u_{11ii}}{u_{11}}+\sum_{j >l} \frac{2u_{1ji}^2}{u_{11
    }(u_{11}-u_{jj})}-\frac{u_{11i}^2}{u_{11}^2}+\bb \frac{u_{ii}}{u}-\bb \frac{u_i^2}{u^2}+\es u_{ii}^2+\es u_{j} u_{jii}. \label{eq second derivative of test function: parabolic} 
\end{align}
Contracting \eqref{eq second derivative of test function: parabolic} with $S_k^{ii}$, we have
\begin{align}
    0 \geq S_k^{ii}\p_{ii} \geq&  \frac{S_k^{ii}u_{11ii}}{u_{11}} +\sum_{j>l}\frac{2S_k^{11}u_{11j}^2}{u_{11}(u_{11}-u_{jj})}+ \sum_{j>l} \frac{2S_k^{jj} u_{jj1}^2}{u_{11}(u_{11}-u_{jj})} -\frac{S_k^{ii}u_{11i}^2}{u_{11}^2} \label{eq 4.4} \\
    &+\bb \frac{S_k^{ii}u_{ii}}{u}-\bb \frac{S_k^{ii}u_i^2}{u^2} +\es S_{k}^{i i} u_{ii}^2+\es S_k^{ii}u_ju_{jii}.\nonumber
\end{align}

Differentiating equation \eqref{main parabolic eq} gives
\begin{align}
    -u_{tj} S_k-u_t  S_{k}^{ii} u_{ii j} &=f_{x_j}+f_{u} u_{j},  \label{First derivation of the main eq: parabolic} \\
    -u_{t11}S_k-2 u_{t1} S_{k}^{ii} u_{ii 1} &-u_t S_{k}^{pq, r s} u_{p q 1} u_{r s 1} -u_t S_k^{ii}u_{ii11} \label{second derivation of the main eq: parabolic}\\
    &=f_{x_1 x_1}+2f_{x_1 u}u_1+f_{u u} u_{1}^{2}+f_{u}u_{11}. \nonumber
\end{align}
Hence 
\begin{align*}
    \frac{S_k^{ii}u_{ii11}}{u_{11}} & \geq  \underbrace{ -\frac{u_{t11}}{u_t u_{11} }S_k }_{I_1} \underbrace{  -2\frac{u_{t1}}{u_t u_{11} }S_k^{ii}u_{ii1}}_{I_2}  \underbrace{-\frac{S_k^{pp,qq}u_{pp1}u_{qq1}}{u_{11}}  }_{I_3}+\underbrace{ \frac{S_k^{pp,qq}u_{pq1}^2}{u_{11}} }_{I_4}-C.
\end{align*}

By \eqref{eq time derivative of test function: parabolic}, \eqref{eq first derivative of test function: parabolic}, \eqref{First derivation of the main eq: parabolic} and the Cauchy inequality, we have 
$$
I_1\geq S_k\z[\frac{\bb}{u}+\es\frac{u_j u_{jt}}{u_t}\x], \quad I_2\geq  \frac{1}{u_{11}}\z[(2-\es_0)\frac{(\sum_j S_k^{jj}u_{jj1})^2}{S_k}-C_{\es_0}\x],
$$
and
$$
I_1+\es S_k^{ii}u_j u_{jii} \geq S_k\frac{\bb}{u}+\es u_j \z[\frac{S_ku_{jt}}{u_t}+S_k^{ii}u_{jii}\x] \geq   \frac{C}{u}-C.
$$

Applying Lemma \ref{lem 2.3} (e), we have for small constant $\es_0>0$,
$$
I_2+I_3\geq \frac{1}{u_{11}}\z[(2-\es_0)\frac{(\sum_j S_k^{jj}u_{jj1})^2}{S_k}-S_{k}^{pp,qq}u_{pp1}u_{qq1}-C_{\es_0}\x]\geq -C.
$$
Moreover, $I_4\geq 2\sum_{j\ne 1} S_k^{jj,11}\frac{u_{11j}^2}{u_{11}}$, and $\bb \frac{S_k^{ii}u_{ii}}{u}=\frac{\bb}{u}[S_k-\al \si_{k-1}]\geq \frac{C}{u}$. Therefore, \eqref{eq 4.4} becomes
\begin{align}
    0\geq&   \frac{C}{u}-C +2 \sum_{j\ne 1} S_k^{jj,11}\frac{u_{11j}^2}{u_{11}}+2 \sum_{j>l} \frac{S_k^{11}u_{11j}^2}{u_{11}(u_{11}-u_{jj})} \label{eq 4.7}\\
    & -\frac{S_k^{ii}u_{11i}^2}{u_{11}^2}-\bb \frac{S_k^{ii}u_i^2}{u^2}+\es S_k^{ii} u_{ii}^2.\nonumber 
\end{align}

Next, we divide into two cases to deal with \eqref{eq 4.7}.

\textbf{Case 1:} $\lam_n <-\frac{\de \lam_1}{3}$ for some small $\de>0$. Using \eqref{eq first derivative of test function: parabolic} and the Cauchy inequality, we have 
\begin{equation}\label{eq 4.8}
    -\frac{S_k^{ii}u_{11i}^2}{u_{11}^2} \geq -2\bb^2 S_k^{ii}\z(\frac{u_{i}}{u}\x)^2-2\es^2 S_k^{ii}(u_i u_{ii})^2,\quad \te{for any } 1\leq i\leq n.
\end{equation}
Hence, \eqref{eq 4.7} becomes
\begin{align*}
    0& \geq \frac{C}{u}-C   
    -\bb(1+2\bb) S_k^{ii}\z(\frac{u_{i}}{u}\x)^2+\es \sum_i (1-2\es u_i^2 )S_k^{ii} u_{ii}^2\\
    & \geq \frac{C}{u}-C-\bb(1+2\bb) S_k^{ii}\z(\frac{u_{i}}{u}\x)^2+\frac{\es}{2} \sum_i  S_k^{ii} u_{ii}^2\\
    & \geq \frac{C}{u}-C+\z[\frac{\es \de^2}{18}u_{11}^2-C\bb(1+2\bb)\x]S_k^{nn} +\frac{\es}{2}  S_k^{11} u_{11}^2\\
    & \geq \frac{C}{u}-C+\frac{\es}{2}\ttt S_k u_{11}.
\end{align*}
Here, $\ttt$ is in Lemma \ref{lem 2.3} (b) and we choose $\es$ sufficiently small, such that $1 \geq 4\es\|Du\|_{L^\wq}^2$. Then, we get \eqref{eq main result: parabolic}.

\textbf{Case 2:} $\lam_n \geq -\frac{\de \lam_1}{3}$. Applying \eqref{eq first derivative of test function: parabolic}, we get, for any $j>l$
$$
\z(\frac{u_j}{u}\x)^2\leq \frac{1+\de}{\bb^2}\frac{u_{11j}^2}{u_{11}^2}+\frac{1+\de^{-1}}{\bb^2}(\es u_j u_{jj})^2,
$$
hence
\begin{align*}
    -\frac{S_k^{jj}u_{11j}^2}{u_{11}^2}-\bb\frac{S_k^{jj}u_j^2}{u^2}& \geq -\z(1+\frac{1+\de}{\bb}\x)\frac{S_k^{jj}u_{11j}^2}{u_{11}^2}-\frac{1+\de^{-1}}{\bb}S_k^{jj}(\es u_j u_{jj} )^2.
\end{align*}
Moreover, for any $j>l$,
\begin{align*}
    \frac{2S_k^{11,jj}u_{11j}^2}{u_{11}}+\frac{2S_k^{11}u_{11j}^2 }{u_{11}(u_{11}-u_{jj})}=\frac{2S_k^{jj}u_{11j}^2}{u_{11}(u_{11}-u_{jj})}& \geq \frac{6S_k^{jj}u_{11j}^2}{(3+\de)u_{11}^2}>\z(1+\frac{1+\de}{\bb}\x)\frac{S_k^{jj}u_{11j}^2}{u_{11}^2}.
\end{align*}
On the other hand, for any $i\leq l$, we use \eqref{eq 4.8}, and hence \eqref{eq 4.7} becomes 
\begin{align*}
    0 \geq&  \frac{C}{u}-C -\frac{1+\de^{-1}}{\bb}\sum_{j>l}S_k^{jj}(\es u_j u_{jj})^2-2\sum_{i\leq l} \z[\bb^2 S_k^{ii}\z(\frac{u_i}{u}\x)^2+\es^2S_k^{ii}(u_i u_{ii})^2\x] \\
    & -\bb \sum_{i\leq l} S_k^{ii}\z(\frac{u_i}{u}\x)^2 +\es \sum_i S_k^{ii}u_{ii}^2 \\
    \geq&  \frac{C}{u}-C+\es\z(1-\es \bb^{-1}\z(1+\de^{-1}\x) \|Du\|_{L^\wq}^2\x)\sum_{j>l}S_k^{jj} u_{j j}^2 \\
    & -\bb (1+2\bb)\sum_{i\leq l}S_k^{ii}\z(\frac{u_i}{u}\x)^2+\es(1-2\es \|Du\|_{L^\wq}^2)\sum_{i\leq l}S_k^{ii}u_{ii}^2.
\end{align*}
Now, we choose $\es$ sufficiently small, such that 
$$
\es >\max\{4,2(1+\de^{-1})\bb^{-1}\}\es^2\|Du\|_{L^\wq}^2.
$$
Finally, applying Lemma \ref{lem 2.3} (b) yields
\begin{align*}
    \frac{\es}{4}\ttt S_k u_{11}+\frac{\es}{4} u_{11}^2\sum_{i\leq l} S_k^{ii}\leq \frac{\es}{2} u_{11}^2\sum_{i\leq l} S_k^{ii}  \leq \frac{C}{-u}+C+\frac{C}{u^2}\sum_{i\leq l}S_k^{ii}.
\end{align*}
Hence, we complete the proof of Theorem \ref{thm main thm: parabolic 1}.

\subsection{Proof of Theorem \ref{thm main thm: parabolic 2}}\label{subsec 4.2}

Assuming that $u$ is a semi-convex solution of equation \eqref{main parabolic eq} with 
$$
\lam(D^2 u) \in \gaa_{k-1}, \quad \te{and} \quad 0<m_1\leq -u_t\leq m_2.
$$
By the maximum principle, we have $u\leq 0$. 

We now consider the following test function:
$$
    \phi=M  \log (-u)+\log P_m+\frac{A}{2} |Du|^{2}+\frac{B}{2}|x|^2,
$$ 
where $P_{m}=\sum_{i=1}^{n} \kappa_{i}^{m}$, $\kappa_{i}=\lambda_{i}+K_{0}>0$, and $M, m, A, B>0$ are parameters to be determined later.

Assume that $\p$ achieves its maximum at $(x_{0},t_0)$ with $x_0 \in \mathcal{O}(t_0)$. Without loss of generality, we assume $D^2 u(x_0,t_0)$ is diagonalized and 
$$
u_{11}\geq u_{22} \geq \cdots \geq u_{nn}, \quad \te{at} \ (x_0,t_0).
$$
In the remainder of the proof, all computations will be carried out at $(x_{0},t_0)$. Similar to Subsection \ref{subsec 4.1}, the following inequality holds:
\begin{align}
    0\leq \p_t  =&M \frac{u_t}{u}+\frac{m}{P_m}\ka_j^{m-1}u_{jjt}+A u_j u_{jt}, \label{eq time derivative of test function: parabolic 2} \\
    0=\phi_{i}  =&M\frac{u_{i}}{u}+\frac{m}{P_m}\ka_j^{m-1}u_{jji}+A u_i u_{ii}+Bx_i, \label{eq first derivative of test function: parabolic 2}\\
    0 \geq \phi_{i i}  =&M \frac{u_{ii}}{u}-M \frac{u_i^2}{u^2}+\frac{m(m-1)}{P_m}\ka_{j}^{m-2}u_{jji}^2 -\frac{m^2}{P_m^2}\z(\sum_{j}\ka_j^{m-1}u_{jji}\x)^2 \label{eq second derivative of test function: parabolic 2} \\
    & +\frac{m}{P_m}\ka_j^{m-1}\z[u_{jjii}+2\sum_{p\ne j}\frac{u_{jpi}^2}{\ka_j -\ka_p}\x] +Au_{ii}^2 +Au_{j}u_{jii}+B, \nonumber \\
    -u_{tj} S_k - &u_t  S_{k}^{ii} u_{ii j} =f_{x_j}+f_{u} u_{j}+f_{p_j}u_{jj},  \label{First derivation of the main eq: parabolic 2} \\
    -u_{tjj}S_k  -&2 u_{tj} S_{k}^{ii} u_{iij} -u_t S_{k}^{pq, r s} u_{p q j} u_{r s j} -u_t S_k^{ii}u_{iijj} \geq f_{p_s}u_{sjj}-C-Cu_{11}^2. \label{second derivation of the main eq: parabolic 2}
\end{align}
Then, contracting $S_k^{ii}$ in \eqref{eq second derivative of test function: parabolic 2}, we have
\begin{align}
    0 \geq&  A S_k^{ii}u_{ii}^2+\underbrace{A S_k^{ii}u_{j}u_{jii}}_{\operatorname{I}} +B\sum_i S_k^{ii} +MS_k^{ii}\frac{u_{ii}}{u}-MS_k^{ii}\z(\frac{u_i}{u}\x)^2  \label{0 geq ... parabolic 2} \\
    & +\frac{m(m-1)}{P_m}S_k^{ii}\ka_{j}^{m-2}u_{jji}^2-\frac{m^2}{P_m^2}S_k^{ii} \z(\sum_{j}\ka_j^{m-1}u_{jji}\x)^2  \nonumber \\ 
    & +\underbrace{\frac{m}{P_m}S_k^{ii}\ka_j^{m-1}u_{jjii}}_{\operatorname{II}} +\frac{m}{P_m}S_k^{ii}\sum_{p\ne j}\frac{\ka_p^{m-1}-\ka_j^{m-1}}{\ka_p-\ka_j} u_{pji}^2. \nonumber
\end{align}
Using \eqref{second derivation of the main eq: parabolic 2}, we have
\begin{align*}
    \operatorname{II} 
    \geq&  \underbrace{ -\frac{m}{P_m}\sum_j \ka_j^{m-1}S_k^{pq,rs}u_{pqj}u_{rsj} }_{\operatorname{II}_1} +\underbrace{\frac{m}{u_t P_m}\sum_j \ka_j^{m-1}\z[C+Cu_{11}^2-f_{p_s}u_{sjj}\x]  }_{\operatorname{II}_2} \\
    & \underbrace{ -\frac{m S_k}{u_t P_m} \sum_j u_{tjj}\ka_j^{m-1} }_{\operatorname{II}_3} \underbrace{ -\frac{2m}{P_m}\sum_j \ka_j^{m-1} S_k^{ii} u_{iij} \frac{u_{tj}}{u_t}}_{\operatorname{II}_4}.
\end{align*} 
We now compute $\operatorname{I}$ and $\operatorname{II}$.
\begin{align*}
    \operatorname{II}_1 &=-\frac{m}{P_m}\sum_j \ka_j^{m-1} \z[S_k^{pp,qq}u_{ppj}u_{qqj}-S_k^{pp,qq}u_{pqj}^2\x] \\
    & \geq \frac{m}{P_m}\sum_j \ka_j^{m-1} \z[-S_k^{pp,qq}u_{ppj}u_{qqj}+2 \sum_{p\ne j}S_k^{pp,jj}u_{pjj}^2\x].
\end{align*}
By applying \eqref{eq time derivative of test function: parabolic 2}, \eqref{eq first derivative of test function: parabolic 2}, \eqref{First derivation of the main eq: parabolic 2} in sequence, along with the Cauchy inequality, we obtain
\begin{align*}
    \operatorname{II}_3 & \geq M\frac{S_k}{u}+AS_k\frac{u_j u_{jt}}{u_t}, \quad  \operatorname{II}_2 \geq -CB+C\frac{M}{u}-Cmu_{11}+A\frac{f_{p_i}u_i u_{ii}}{u_t}, \\
    \operatorname{II}_2 & + \operatorname{II}_3 +\operatorname{I} \geq C\frac{M}{u}-C(A+B)-Cm u_{11}, \\
    \operatorname{II}_4 & =\frac{2m}{P_m}\sum_j \ka_j^{m-1} \z(\sum_i S_k^{ii}u_{iij}\x) \z[\frac{\sum_i S_k^{ii}u_{iij}}{S_k}+\frac{(f)_j}{u_t S_k}\x] \\
    & \geq \frac{m}{P_m}\sum_j \ka_j^{m-1}\z[ (2-\es)  \frac{\z(\sum_i S_k^{ii}u_{iij}\x)^2}{S_k}-C_\es  \frac{(f)_j^2}{u_t^2 S_k}\x],
\end{align*}
where $\es>0$ is a sufficiently small constant. Hence  
\begin{align*}
    \operatorname{II}_1 +\operatorname{II}_4  \geq \frac{m}{P_m}\sum_j \ka_j^{m-1} \z[ (2-\es)\frac{(S_k)_j^2}{S_k}-S_k^{pp,qq}u_{ppj}u_{qqj} +2\sum_{i\ne j}S_k^{ii,jj}u_{ijj}^2\x] -Cm u_{11}.
\end{align*}
Therefore, \eqref{0 geq ... parabolic 2} becomes
\begin{align*}
    0 \geq & AS_k^{ii}u_{ii}^2+B\sum_i S_k^{ii}+ M\z[\frac{S_k^{ii}u_{ii}}{u}+\frac{C}{u}\x]-MS_k^{ii}\z(\frac{u_i}{u}\x)^2-C_{A,B}-Cmu_{11}\\
    & +\frac{m(m-1)}{P_m}\sum_iS_k^{ii}\sum_j \ka_j^{m-2} u_{jji}^2 -\frac{m^2}{P_m^2}\sum_i S_k^{ii} \z(\sum_{j}\ka_j^{m-1}u_{jji}\x)^2\\
    &+ \frac{m}{P_m}\sum_i \ka_i^{m-1}\z[(2-\es)\frac{(S_k)_i^2}{S_k}-S_k^{pp,qq}u_{ppi}u_{qqi}\x] +\frac{2m}{P_m}\sum_i \sum_{j\ne i}S_k^{ii,jj} \ka_j^{m-1} u_{jji}^2 \\
    &   +\frac{2m}{P_m}\sum_i S_k^{jj}\sum_{j \ne i}\frac{\ka_j^{m-1}-\ka_i^{m-1}}{\ka_j-\ka_i} u_{jji}^2, 
\end{align*}
where $C_{A,B}\leq C (A+B)$.

Based on the analysis in Section \ref{sec 3}, we obtain an inequality similar to that in inequality \eqref{inequality 5}. It is important to note that, when $\es$ is sufficiently small, the term $\frac{m}{P_m} \ka_i^{m-1}\z[(2-\es)\frac{(S_k)_i^2}{S_k}-S_k^{pp,qq}u_{ppi}u_{qqi}\x]$ is treated in the same way as $A_i$. Hence,  by choosing the constants that satisfy \eqref{constants condition 1}, there exists $\ga_0$ such that 
$$
    \ga_0 \log (-u(x)) +\log u_{11}(x) \leq C, \quad \forall \  x \in \mathcal{O},
$$ 
where $\ga_0$ and $C$ depend on $n$, $k$, $m_1$, $m_2$, $\inf f$, $\|f\|_{C^{1,1}}$, $\|u\|_{C^1}$, and $\operatorname{diam}(\mathcal{O}(0))$. Hence, we complete the proof of Theorem \ref{thm main thm: parabolic 2}.

\section{The rigidity theorems for the elliptic Hessian equation and its parabolic version}\label{sec 5}
\setcounter{equation}{0}

In this section, we prove the rigidity theorem. In order to get rid of the dependence of $Du$ in Pogorelov estimates, we need the following lemma.
\begin{lemma}\label{lem 5.1}
    Let $u$ be a $(k-1)$-convex and semi-convex solution to
    \begin{equation}\label{eq 5.1}
        \left\{\begin{aligned}
            & \si_k(D^2 u)+\al \si_{k-1}(D^2 u)=f(x), && \te{in}\ \ooo, \\
            & u=0, && \te{on} \ \pa \ooo,
        \end{aligned}\right.
    \end{equation}
    where $f \in C^{1,1}(\overline{\ooo})$ is a positive function and $\al >0$. Then there exists $\ga_0>0$ such that
    \begin{equation}\label{eq 5.2}
        (-u)^{\ga_0} \dd u \leq C,
    \end{equation}
    where $\ga_0$ and  $C$ depend on $n$, $k$, $\inf f$, $\|f\|_{C^{1,1}}$, and $\operatorname{diam}(\ooo)$.
\end{lemma}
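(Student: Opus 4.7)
The plan is to deduce Lemma \ref{lem 5.1} as a direct corollary of Theorem \ref{thm main thm: elliptic}, by supplementing it with independent a priori bounds on $\|u\|_{L^\infty(\overline{\Omega})}$ and $\|Du\|_{L^\infty(\overline{\Omega})}$ in terms of the data listed in the statement. The key observation is that in the proof of Theorem \ref{thm main thm: elliptic}, $\|u\|_{C^1}$ enters only through the term $\mathrm{I}=Af_{x_s}u_s+Af_u|Du|^2-Mf_{p_s}u_s/u-Bf_{p_s}x_s$; since $f=f(x)$ in the present setting, $f_u\equiv f_{p_s}\equiv 0$ and the only residual contribution is $Af_{x_s}u_s$, which is controlled whenever $\|Du\|_{L^\infty}$ is bounded.

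For the $L^\infty$ bound, $(k-1)$-convexity forces $\Delta u>0$, so by the maximum principle $u\leq 0$ in $\Omega$. For the lower bound I would choose a ball $B_R(x_0)\supset\Omega$ with $R\leq\operatorname{diam}(\Omega)$ and compare $u$ with the radial barrier $\underline{u}(x)=\tfrac{A}{2}\bigl(|x-x_0|^2-R^2\bigr)$, taking $A$ so large that $\binom{n}{k}A^k+\alpha\binom{n}{k-1}A^{k-1}\geq \sup_\Omega f$. Then $\underline{u}\in\tilde{\Gamma}_k$ is a classical subsolution of \eqref{eq 5.1} with $\underline{u}\leq 0=u$ on $\partial\Omega$, and the comparison principle for the sum Hessian operator gives $\underline{u}\leq u$, hence $\|u\|_{L^\infty}\leq AR^2/2$ with a constant depending only on $\sup f$ and $\operatorname{diam}(\Omega)$.

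For the gradient bound, I would apply a Bernstein-type argument to $\psi=\tfrac{1}{2}|Du|^2-Mu$ with $M$ large. Differentiating \eqref{eq 5.1} gives $S_k^{ij}u_{ijk}=f_{x_k}(x)$, which crucially has no dependence on $Du$, so that
\begin{equation*}
S_k^{ij}\psi_{ij}=S_k^{ij}u_{ki}u_{kj}+u_k f_{x_k}-M\bigl(k\sigma_k+(k-1)\alpha\sigma_{k-1}\bigr).
\end{equation*}
At an interior maximum of $\psi$ the first-order condition $u_k u_{ki}=Mu_i$ forces an eigenvalue $M$ of $D^2u$ in the direction of $Du$, and combined with the ellipticity of $S_k^{ij}$ on $\tilde{\Gamma}_k$, the semi-convexity $\lambda_i>-K_0$, and the cone lower bound $\sum_i S_k^{ii}\geq c\sigma_1^{1/(k-2)}$ from Lemma \ref{lem 2.5}(c), a Cauchy-Schwarz absorption of $u_k f_{x_k}$ produces a bound $|Du|^2\leq C$ in terms of the listed data. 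At a boundary maximum the tangential derivatives of $u$ vanish along $\partial\Omega$, and the normal derivative is controlled by Hopf's lemma applied to the barrier $\underline{u}$ from the previous step.

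The main obstacle will be the interior Bernstein step: the operator $S_k^{ij}$ is elliptic only on $\tilde{\Gamma}_k$ (rather than on $\Gamma_k$), so the potentially unsigned term $-M\bigl(k\sigma_k+(k-1)\alpha\sigma_{k-1}\bigr)$ must be absorbed using Lemma \ref{lem 2.3}(b) and the semi-convexity bound to control the negative part of $\sigma_k$. Once $\|u\|_{C^1}$ is under control, Theorem \ref{thm main thm: elliptic} applied to $u$ yields \eqref{eq 5.2} with constants depending only on $n$, $k$, $\inf f$, $\|f\|_{C^{1,1}}$, and $\operatorname{diam}(\Omega)$, completing the proof.
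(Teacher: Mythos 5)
Your reduction to Theorem \ref{thm main thm: elliptic} is not the route the paper takes, and it contains a genuine gap at the gradient-estimate step. The paper proves Lemma \ref{lem 5.1} by rerunning the Pogorelov computation with the modified test function $\phi=M\log(-u)+\log P_m+\tfrac{B}{2}|x|^2$, i.e.\ with the term $\tfrac{A}{2}|Du|^2$ \emph{deleted}. Because $f=f(x)$, the differentiated equation reads $S_k^{ii}u_{iij}=f_j$ with no gradient terms, and the only occurrence of $Du$ in the computation, namely $M u_i/u$, is eliminated by substituting the critical-point identity $Mu_i/u=-\tfrac{m}{P_m}\kappa_j^{m-1}u_{jji}-Bx_i$, whose right-hand side involves only third derivatives and $|x|\le\operatorname{diam}(\Omega)$. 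The loss of the good term $AS_k^{ii}u_{ii}^2$ is compensated by $B\sum_iS_k^{ii}\ge c_0B\,\sigma_1^{1/(k-2)}$ from Lemma \ref{lem 2.5}. Thus no bound on $\|Du\|_{L^\infty}$ is ever needed. (Your $C^0$ barrier step does coincide with the paper's.)

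Your Bernstein argument for the gradient bound does not close. At an interior maximum of $\psi=\tfrac12|Du|^2-Mu$ the inequality $0\ge S_k^{ij}\psi_{ij}$ becomes
\begin{equation*}
\sum_i S_k^{ii}u_{ii}^2 \;+\; M\alpha\,\sigma_{k-1}\;\le\; |Du|\,\|Df\|_{L^\infty}+Mk\sup f ,
\end{equation*}
using $S_k^{ij}u_{ij}=kS_k-\alpha\sigma_{k-1}=kf-\alpha\sigma_{k-1}$. The only $|Du|$-dependence sits on the \emph{right}-hand side, and nothing on the left grows with $|Du|$ (the critical-point condition only forces one eigenvalue to equal $M$, independently of $|Du|$), so there is no ``Cauchy--Schwarz absorption'' available and no upper bound on $|Du|$ results. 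Moreover, even a correct interior gradient estimate (which for $(k-1)$-convex, semi-convex solutions would itself require a nontrivial Trudinger-type argument) would degenerate like $\operatorname{dist}(x,\partial\Omega)^{-1}$ near the boundary, whereas Theorem \ref{thm main thm: elliptic} as stated needs the \emph{global} $\|u\|_{C^1}$; the boundary gradient estimate via Hopf's lemma requires a barrier vanishing on $\partial\Omega$ and hence geometric/regularity hypotheses on $\partial\Omega$ that the lemma does not assume. Finally, your claim that $\|u\|_{C^1}$ enters the proof of Theorem \ref{thm main thm: elliptic} only through the term $\mathrm{I}$ is inaccurate: the gradient also enters through the $Au_iu_{ii}$ contribution to \eqref{eq first derivative of test function}, which is used (and bounded by $\|Du\|_{L^\infty}$) in Lemma \ref{lem 3.3} and in \eqref{inequality 5}. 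The clean fix is exactly the paper's: remove the gradient term from the test function rather than trying to bound the gradient.
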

\begin{proof}
    Obviously, for sufficiently large $a_0$ and $b_0$, the function $w=\frac{a_0}{2}|x|^2-b_0$ can control $u$ by comparison principal (see \cite{CNS3} for detail), namely, 
    $$
    w\leq u\leq 0.
    $$
    Here, $a_0$, $b_0$ depend on $\operatorname{diam}(\ooo)$. Hence, in the following proof, the constants $\ga_0$ and $C$ in \eqref{eq 5.2} can contain $\sup_{\ooo} |u|$.

    Next, we consider the following test function
    $$\phi=M \log (-u)+\log P_m+\frac{B}{2}|x|^2,$$ 
    where $P_m=\sum_{j} \ka_j^m$, $\ka_j=\lambda_j+K_0>0$, and $M, m, B>0$ are parameters to be determined later. Assume $\p$ attains its maximum at $x_0 \in \ooo$. Then, at $x_0$
    \begin{align}
        0=\phi_i & =M \frac{u_i}{u}+ \frac{m}{P_m} \ka_j^{m-1} u_{j j i}+B x_i, \label{eq 5.3}\\
        0 \geq \phi_{i i} =&M \frac{u_{i i}}{u}-M\left(\frac{u_i}{u}\right)^2+m \frac{\ka_j^{m-1}}{P_m}\left[u_{j j i i}+2 \sum_{p \neq j} \frac{u_{j p i}^2}{\ka_j-\ka_p}\right] \label{eq 5.4}\\
        & + \frac{m(m-1)}{P_m}\ka_j^{m-2} u_{j j i}^2- \frac{m^2}{P_m^2}\left(\sum_j \ka_j^{m-1} u_{jji}\right)^2+B.\nonumber
    \end{align}
    Multiplying $S_k^{ii}$, then
    \begin{align}
        0 \geq&   M S_k^{ii}\frac{u_{ii}}{u}-M S_k^{ii}\z(\frac{u_i}{u}\x)^2 +\frac{m}{P_m}S_k^{ii}\ka_j^{m-1}u_{jjii}+\frac{2m}{P_m}S_k^{ii}\sum_{j\ne i}\frac{\ka_j^{m-1}-\ka_i^{m-1}}{\ka_j -\ka_i}u_{jii}^2 \label{eq 5.5}\\
        & +\frac{m(m-1)}{P_m} S_k^{i i} \ka_j^{m-2} u_{j ji}^2-\frac{m^2}{P_m^2} S_k^{i i}\left(\sum_j \ka_j^{m-1} u_{jj i}\right)^2+B\sum_i S_k^{ii}.\nonumber
    \end{align}
    Differential \eqref{eq 5.1} twice, we have
    \begin{align}
        & \left(S_k\right)_j=S_k^{i i} u_{i ij}=f_j, 
        \quad S_k^{pq,rs} u_{pqj} u_{rsj}+S_k^{ii} u_{iijj}=f_{j j}. \label{eq 5.6}
    \end{align}
    By \eqref{eq 5.3}, \eqref{eq 5.6}, and the Cauchy inequality, we obtain, for any $1\leq i\leq n$
    \begin{align}
        & -MS_k^{ii}\z(\frac{u_i}{u}\x)^2 
        \geq -\frac{2}{M}\frac{m^2}{P_m^2} S_k^{ii} \z(\sum_j \ka_j^{m-1}u_{jji}\x)^2-\frac{2B^2}{M}S_k^{ii} x_i^2, \label{eq 5.7}
    \end{align}
    and
    \begin{align*}
        S_k^{i i} u_{ii j j} 
        \geq-C-S_k^{p p, q q} u_{p p j} u_{q q j}+S_k^{p p, q q} u_{p q j}^2 \geq -C-S_k^{p p, q q} u_{p p j} u_{q q j}+2 \sum_{p \neq j} S_k^{p p,jj} u_{p j j}^2. 
    \end{align*}
    Hence
    \begin{align}\label{eq 5.8}
        & \frac{m}{P_m} S_k^{i i} \ka_j^{m-1}u_{jjii}\geq -C\frac{m}{u_{11}}-\frac{m}{P_m}\ka_j^{m-1}S_k^{pp,qq}u_{ppj}u_{qqj}+\frac{2m}{P_m}\sum_{j\ne i}S_k^{ii,jj}\ka_j^{m-1}u_{jji}^2. 
    \end{align}

    Using the fact that $\frac{(S_k)_j^2}{S_k}\leq C$ and $\sum_i S_k^{ii}\frac{u_{ii}}{u}\geq k\frac{S_k}{u}$, and substituting \eqref{eq 5.7} and \eqref{eq 5.8} into \eqref{eq 5.5}, we conclude that
    \begin{align}
        0 \geq&  kM\frac{S_k}{u} +B\z(1-C\frac{B}{M}\x) \sum_i S_k^{i i}-C \frac{m}{u_{11}} \nonumber \\
        & +\frac{m}{P_m} \sum_i \ka_i^{m-1} \left[2\frac{\left(S_k\right)_i^2}{S_k} - S_k^{p p,qq} u_{p p i} u_{qq i}\right]+\frac{2m}{P_m}\sum_i \sum_{j \neq i} S_k^{i i,j j}\ka_j^{m-1} u_{jji}^2 \nonumber \\
        & +\frac{m(m-1)}{P_m}\sum_i \sum_j S_k^{ii}\ka_j^{m-2}u_{jji}^2 +\frac{2m}{P_m}\sum_i \sum_{j \ne i}S_k^{jj} \frac{\ka_j^{m-1}-\ka_i^{m-1}}{\ka_j -\ka_i }u_{jji}^2 \nonumber\\
        & - \z(1+\frac{2}{M}\x)\frac{m^2}{P_m^2}\sum_i S_k^{ii}\z(\sum_j \ka_j^{m-1}u_{jji}\x)^2. \nonumber\\
        \triangleq&  kM\frac{S_k}{u} +B\z(1-C\frac{B}{M}\x) \sum_i S_k^{i i}-C \frac{m}{u_{11}} \label{eq 5.9} \\
        & +\sum_i\z[A_i+B_i+C_i+D_i-\z(1+\frac{2}{M}\x)E_i\x],\nonumber
    \end{align}
    where $A_i$, $B_i$, $C_i$, $D_i$, $E_i$ are defined in Section \ref{sec 3}. 

    Similar to the analysis in Section \ref{sec 3}, for a fixed sufficiently small constant $\e$ in Lemma \ref{lem A_1+... geq ...}, we choose constants $\lam_1$, $L$, $M$, $m$, $B$ such that
    \begin{equation}\label{constants condition 3}
        L\gg  M \gg B\gg  m\gg 1, \ \te{and} \  \lam_1 \  \te{is sufficiently large}.
    \end{equation}
    And combining with Lemma \ref{lem 2.5} (3), we have
    \begin{align}
        -C\frac{M}{u} & \geq -C\frac{m}{u_{11}} +\frac{B}{2}\sum_i S_k^{ii}+\sum_{i\ne 1}\z(\frac{1}{m}-\frac{2}{M}\x)E_i +\z(\frac{\e}{m}-\frac{2}{M}\x)E_1 \nonumber\\
        & \geq -C\frac{m}{u_{11}}+\frac{B}{2}\sum_i S_k^{ii} 
        \geq - C\frac{m}{u_{11}}+c_0 \frac{B}{2}\si_1^{\frac{1}{k-2}}.\nonumber \\
        & \geq - C\frac{m}{u_{11}}+c_0 \frac{B}{2}u_{11}^{\frac{1}{k-2}}. \label{eq 5.11}
    \end{align}
    Here, $c_0$ is in Lemma \ref{lem 2.5}. Therefore, we complete the proof.
\end{proof}

\begin{proof}[\textbf{Proof of Theorem $\ref{thm rigidity theorem 1}$}]
    The proof is standard (refer to \cite{He-Sheng-Xiang-Zhang-2022-CCM,LRW2,Liu-Ren-2023-JFA,TW,Tu-2024-arxiv,Zhang-2024-arxiv} for details). Suppose $u$ is an entire solution of the equation \eqref{main eq 1}. For arbitrary positive constant $R>1$, define
    $$
    v(y):=\frac{u(Ry)-R^2}{R^2}, \quad  \ooo_R:=\{y\in \rr^n: u(Ry)\leq R^2\}.
    $$
    Then, $v$ satisfies the following Dirichlet problem:
    \begin{equation*}
        \left\{\begin{aligned}
            & S_k(D^2 v)=1, && \te{in} \ \ooo_R, \\
            & v=0, && \te{on} \ \pa \ooo_R.
        \end{aligned}\right.
    \end{equation*}
    By Lemma \ref{lem 5.1}, we have the following estimates
    \begin{equation}\label{eq 5.12}
        (-v)^{\ga_0}\dd v\leq C,
    \end{equation}
    where the constants $\ga_0$ and $C$ depend on $n$, $k$ and the $\operatorname{diam}(\ooo_R)$. By employing the quadratic growth condition \eqref{growth condition}, it follows that
    $$
    a|Ry|^2-b\leq u(Ry) \leq R^2, \quad \te{in} \ \ooo_R.
    $$
    Namely
    $$
    |y|^2 \leq \frac{1+b}{a}.
    $$
    Hence, $\ooo_R$ is bounded. Thus, the constant $\ga_0$ and $C$ are independent of $\operatorname{diam}(\ooo_R)$. Next, we consider the domain
    $$\ooo_R'=\{y: u(Ry)\leq R^2/2\} \subset\ooo_R.$$
    In $\ooo_R'$, we have
    $$
    v(y)\leq-\frac{1}{2}.
    $$
    Hence, it follows form \eqref{eq 5.12} that 
    \begin{equation*}
        \dd v\leq 2^{\ga_0} C, \quad \te{in} \ \ooo_R'.
    \end{equation*}
    Hence $\dd u(x)=\dd v(y)\leq 2^{\ga_0}C$. That means
    $$
    \dd u(x)\leq C_0, \quad \te{in} \ \ooo_R''=\{x:u(x)\leq R^2/2\},
    $$
    where $C_0$ is an absolutely constants. The estimate above holds true for all large $R$. Thus we using Evans-Krylov theory \cite{GT}, we have for any $0<\bb<1$,
    $$
    \|D^2 u\|_{C^{0,\bb}(\ooo_R''\cap B_{R/2})}\leq C(n,\bb)\frac{\|D^2 u\|_{L^\wq(\ooo_R''\cap B_R)}}{R^\bb}\leq\frac{C(\bb)}{R^\bb}.
    $$
    Letting $R\to \wq$, and we finish the proof of Theorem \ref{thm rigidity theorem 1}.
\end{proof}

Finally, we also have the parabolic version of Lemma \ref{lem 5.1}.

\begin{lemma}\label{lem 5.2}
    Let $u$ be a $(k-1)$-convex and semi-convex solution to
    \begin{equation}
        \left\{\begin{aligned}
            & -u_t\cdot S_k\left(D^2 u\right)=f(x),  &&\te{in } \mathcal{O}, \\
            & u=0, && \text{on } \pa \mathcal{O}.
    \end{aligned}\right.
    \end{equation}
    where $f \in C^{1,1}(\overline{\mathcal{O}})$ is a positive function. Assume $0<m_1\leq -u_t\leq m_2$. 
    Then there exists $\ga_0>0$ such that
    \begin{equation}
        (-u)^{\ga_0} \dd u \leq C,
    \end{equation}
    where $\ga_0$ and $C$  depend on $n$, $k$, $m_1$, $m_2$, $\inf f$, $\|f\|_{C^{1,1}}$, $\operatorname{diam}(\mathcal{O}(0))$.
\end{lemma}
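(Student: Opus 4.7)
The plan is to combine the maximum-principle/comparison bound used in Lemma \ref{lem 5.1} with the parabolic analogue of the auxiliary function, and then to re-run the third-order-derivative analysis of Section \ref{sec 3} together with the time-derivative handling of Subsection \ref{subsec 4.2}. First, since $f$ is positive and bounded, and since $-u_t$ is pinched between $m_1$ and $m_2$, for each fixed $t$ the spatial equation
\[
S_k(D^2u(\cdot,t)) \;=\; \frac{f(x)}{-u_t(x,t)}
\]
has right-hand side bounded between $\inf f/m_2$ and $\|f\|_\infty/m_1$. Hence the radially symmetric barrier $w(x)=\tfrac{a_0}{2}|x-x_0|^2-b_0$ (for $a_0$, $b_0$ depending only on $\operatorname{diam}(\mathcal{O}(0))$, $m_1$, $m_2$, and $\|f\|_{C^{0}}$) controls $u$ from below by the comparison principle, so $\sup_{\mathcal{O}}|u|$ may be absorbed into the constants.

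Next I would take, in analogy with Lemma \ref{lem 5.1}, the test function
\[
\phi \;=\; M\log(-u)+\log P_m+\frac{B}{2}|x|^2,
\]
with no $|Du|^2$ term (since $f$ does not depend on the gradient) and $P_m=\sum_i \kappa_i^m$, $\kappa_i=\lambda_i+K_0>0$. Assume $\phi$ attains its maximum at an interior point $(x_0,t_0)$, diagonalise $D^2u$ there with $u_{11}\geq\cdots\geq u_{nn}$, and write down $\phi_t\geq0$, $\phi_i=0$, $\phi_{ii}\leq0$. Contracting the $\phi_{ii}$ inequality with $S_k^{ii}$ produces the same list of third-order terms $A_i,B_i,C_i,D_i,E_i$ as in Section \ref{sec 3}, together with an extra term $\frac{m}{P_m}\sum_j \kappa_j^{m-1} S_k^{ii} u_{jjii}$ coming from the spatial Hessian of $\log P_m$.

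To eliminate $u_{jjii}$, I would differentiate the parabolic equation $-u_tS_k=f$ once and twice in $x_j$, yielding
\[
-u_{tj}S_k-u_tS_k^{ii}u_{iij}=f_j,\qquad -u_{tjj}S_k-2u_{tj}S_k^{ii}u_{iij}-u_tS_k^{pq,rs}u_{pqj}u_{rsj}-u_tS_k^{ii}u_{iijj}=f_{jj}.
\]
The term $-u_{tjj}S_k/u_t$ is then combined with $\phi_t\geq0$ exactly as in Subsection \ref{subsec 4.2} (using $\phi_t = Mu_t/u+mP_m^{-1}\kappa_j^{m-1}u_{jjt}$), which turns the time-derivative contribution into a $\frac{CM}{(-u)}$ term plus controlled errors; the cross term $-2u_{tj}S_k^{ii}u_{iij}/u_t$ is absorbed, using the first-derivative formula \eqref{First derivation of the main eq: parabolic}, into the third-order sum while losing only an arbitrarily small fraction of the coefficient in front of $(S_k)_j^2/S_k$. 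This is precisely where the bounds $m_1\leq -u_t\leq m_2$ are used: they prevent degeneracy when dividing by $u_t$ and give quantitative control on the constants.

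After these manipulations the inequality at $(x_0,t_0)$ reads, schematically,
\[
0\geq \frac{kMS_k}{u}+B\!\left(1-C\frac{B}{M}\right)\sum_i S_k^{ii} -C\frac{m}{u_{11}}+\sum_i\!\left[A_i+B_i+C_i+D_i-\left(1+\tfrac{2}{M}\right)\!E_i\right],
\]
which is formally identical to \eqref{eq 5.9}. I would then invoke verbatim Lemmas \ref{lem A_i+... geq 0}, \ref{lem A_1+... geq ...}, \ref{lem 3.3} and \ref{lem 3.4}, together with Lemma \ref{lem 2.5}(c), to conclude
\[
-\frac{CM}{u}\;\geq\;-\frac{Cm}{u_{11}}+\frac{c_0 B}{2}\,u_{11}^{1/(k-2)},
\]
choosing the constants $L\gg M\gg B\gg m\gg 1$ as in \eqref{constants condition 3}, and $\lambda_1$ sufficiently large. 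This forces $(-u)^{\gamma_0}u_{11}\leq C$ at $(x_0,t_0)$, hence everywhere. The hard part is the cross term $-2u_{tj}S_k^{ii}u_{iij}/u_t$: it is a genuinely new third-order quantity with no elliptic analogue, and the whole argument hinges on absorbing it into the $(2-\varepsilon)(S_k)_j^2/S_k$ buffer produced by Lemma \ref{lem 2.3}(e) without harming the delicate threefold split of cases (Case 1, 2, 3) in the proof of Lemma \ref{lem A_1+... geq ...}. Once that is done, the rigidity argument for the parabolic case (the analogue of Theorem \ref{thm rigidity theorem 1}) proceeds by the same rescaling-plus-Evans--Krylov scheme.
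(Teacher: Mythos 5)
Your proposal is correct and follows essentially the same route as the paper: the same test function $M\log(-u)+\log P_m+\tfrac{B}{2}|x|^2$, the same use of $\phi_t\geq 0$ to convert $-u_{tjj}S_k/u_t$ into an $MS_k/u$ term, the same absorption of the cross term $-2u_{tj}S_k^{ii}u_{iij}/u_t$ via the once-differentiated equation and Cauchy (yielding the $(2-\es)(S_k)_j^2/S_k$ coefficient, which for small $\es$ is handled exactly as $A_i$ in Section \ref{sec 3}), and the same concluding inequality $-CM/u\geq -Cm/u_{11}+c_0 B\,u_{11}^{1/(k-2)}/2$ under the constant hierarchy \eqref{constants condition 3}. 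The only cosmetic slip is citing \eqref{First derivation of the main eq: parabolic} from Subsection \ref{subsec 4.1} rather than its gradient-free analogue \eqref{eq 5.18}, which does not affect the argument.
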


\begin{proof}
    The same as Lemma \ref{lem 5.1}, we get the $C^0$ bound of $u$. Now consider the following test function
    $$\phi=M \log (-u)+\log P_m+\frac{B}{2}|x|^2,$$
    where $P_m=\sum_j \ka_j^m, \ka_j=\lambda_j+K_0>0$, and $M, m, B>0$ are parameters to be determined later. Assume $\p$ attains its maximum at $\left(x_0, t_0\right), x_0 \in \mathcal{O}\left(t_0\right)$. 
    Similar to Subsection \ref{subsec 4.2},  at $(x_0,t_0)$, the following inequality holds:
    \begin{align}
        0\leq \p_t =& M \frac{u_t}{u}+\frac{m}{P_m}\ka_j^{m-1}u_{jjt}, \label{eq 5.15} \\
        0=\phi_{i} =& M\frac{u_{i}}{u}+\frac{m}{P_m}\ka_j^{m-1}u_{jji}+Bx_i, \label{eq 5.16}\\
        0 \geq \phi_{i i} = &M \frac{u_{ii}}{u}-M \frac{u_i^2}{u^2}+\frac{m(m-1)}{P_m}\ka_{j}^{m-2}u_{jji}^2 -\frac{m^2}{P_m^2}\z(\sum_{j}\ka_j^{m-1}u_{jji}\x)^2 \label{eq 5.17} \\
        & +\frac{m}{P_m}\ka_j^{m-1}\z[u_{jjii}+2\sum_{p\ne j}\frac{u_{jpi}^2}{\ka_j -\ka_p}\x] +B, \nonumber \\
        -u_{tj} S_k  -&u_t  S_{k}^{ii} u_{ii j} =f_{j},  \label{eq 5.18} \\
        -u_{tjj}S_k  -&2 u_{tj} S_{k}^{ii} u_{iij} -u_t S_{k}^{pq, r s} u_{p q j} u_{r s j} -u_t S_k^{ii}u_{iijj} = f_{jj}. \label{eq 5.19}
    \end{align}
    Then
    \begin{align}
        0\geq &  M S_k^{i i} \frac{u_{i i}}{u}-M S_k^{i i}\left(\frac{u_i}{u}\right)^2+\frac{m}{P_m} S_k^{i i} \ka_j^{m-1} u_{j j i i}+\frac{2m}{P_m} S_k^{i i} \sum_{j \neq i} \frac{\ka_j^{m-1}-\ka_i^{m-1}}{\ka_j-\ka_i} u_{jii}^2 \label{eq 5.20} \\
        & +\frac{m(m-1)}{P_m} S_k^{i i} \ka_j^{m-2} u_{j j i}^2-\frac{m^2}{P_m^2} S_k^{i i}\left(\sum_j \ka_j^{m-1} u_{jji}\right)^2+B \sum_i S_k^{i i}.\nonumber
    \end{align}
    
    It is easy to see that $\sum_i S_k^{ii}\frac{u_{ii}}{u}\geq k\frac{S_k}{u}$. By \eqref{eq 5.16} and the Cauchy inequality, 
    \begin{equation}\label{eq 5.21}
        -MS_k^{ii}\z(\frac{u_i}{u}\x)^2 
        \geq -\frac{2}{M}\frac{m^2}{P_m^2}S_k^{ii}\z(\sum_j \ka_j^{m-1}u_{jji}\x)^2-\frac{2B^2}{M}S_k^{ii}x_i^2.
    \end{equation}
  
    Additionally, using \eqref{eq 5.19},
    \begin{align*}
        \frac{m}{P_m}\ka_j^{m-1} S_k^{ii} u_{jjii} & =\frac{m}{P_m} \ka_j^{m-1} \left[-\frac{u_{t j j}}{u_t} S_k-2 \frac{u_{t j}}{u_t} S_k^{i i} u_{iij}- S_k^{pq,rs} u_{p q j} u_{r s j}-\frac{f_{jj}}{u_t}\right]\\
        & \triangleq Q_1+Q_2+Q_3+Q_4.
    \end{align*}
    Using \eqref{eq 5.15}, \eqref{eq 5.18} and the Cauchy inequality, respectively
    \begin{align*}
        & Q_1 \geq M\frac{S_k}{u}, \\
        & Q_2 \geq \frac{m}{P_m}\ka_j^{m-1}(2-\es) \frac{\z(\sum_i S_k^{ii}u_{iij}\x)^2}{S_k} -C_\es \frac{m}{u_{11}}. \\
        & Q_3 \geq \frac{m}{P_m}\ka_j^{m-1} \z[-S_k^{pp,qq}u_{ppj}u_{qqj}+2\sum_{j \ne p} S_k^{pp,jj}u_{pjj}^2 \x].
    \end{align*}
    Substituting these computations into \eqref{eq 5.20}, we obtain an inequality same to that in \eqref{eq 5.9}:
    \begin{align*}
        -CM\frac{S_k}{u} \geq   \frac{B}{2} \sum_i S_k^{i i}-C \frac{m}{u_{11}}  +\sum_i\z[A_i+B_i+C_i+D_i-\z(1+\frac{2}{M}\x)E_i\x],
    \end{align*}
    In view of \eqref{eq 5.21}, the constant $C$ here depends on $\operatorname{diam}(\mathcal{O}(0))$. In particular, in the process of handling \eqref{eq 5.20}, the term $\frac{m}{P_m} \ka_i^{m-1}\z[(2-\es)\frac{(S_k)_i^2}{S_k}-S_k^{pp,qq}u_{ppi}u_{qqi}\x]$ appears. As mentioned in Subsection \ref{subsec 4.2}, the treatment of this term is analogous to that of $A_i$ in Section \ref{sec 3}, when $\es$ is sufficiently small. Therefore, by choosing the constants that satisfy \eqref{constants condition 3}, we have the following inequality similar to inequality \eqref{eq 5.11}:
    $$
    -C\frac{\ga_0}{u}\geq-\frac{C}{u_{11}}+\frac{c_0}{2} \si_1^{\frac{1}{k-2}}\geq  -\frac{C}{u_{11}}+\frac{c_0}{2} u_{11}^{\frac{1}{k-2}},
    $$
    where $\ga_0$ is a sufficiently large constant. Hence, we complete the proof .
\end{proof}

\begin{proof}[\textbf{Proof of Theorem $\ref{thm rigidity theorem 2}$}]
    Assume $u$ is an entire solution of the equation 
    $$
    -u_t\cdot (\si_k(D^2 u)+\al \si_{k-1}(D^2 u))=1, \quad \te{in } \rr^n \times (-\wq,0].    
    $$
    For arbitrary positive constant $R>1$, we define
    $$
    v(y,t):=\frac{u(Ry,R^2 t )-R^2}{R^2}, \quad  \mathcal{O}_R:=\{(y,t): u(Ry,R^2 t )\leq R^2\}.
    $$
    Then, $v$ satisfies the following Dirichlet problem:
    \begin{equation*}
        \left\{\begin{aligned}
            & -v_t \cdot S_k(D^2 v)=1, & & \te{in} \ \mathcal{O}_R, \\
            & v=0, & & \te{on} \ \pa\mathcal{O}_R.
        \end{aligned}\right.
    \end{equation*}
    By Lemma \ref{lem 5.2}, we have the following estimates
    \begin{equation}\label{eq 5.22}
        (-v)^{\ga_0}\dd v\leq C,
    \end{equation}
    where the constants $\ga_0$ and $C$ depend on $n$, $k$ $m_1$, $m_2$ and $\operatorname{diam}(\mathcal{O}(0))$. By the quadratic growth condition, it follows that
    $$
    a|Ry|^2-b\leq  u(Ry,0)\leq u(Ry,R^2t)\leq R^2, \quad \te{in} \ \mathcal{O}_R.
    $$
    Namely
    $$
    |y|^2 \leq \frac{1+b}{a}.
    $$
    Hence, the set $\{|x|:(x,t)\in \mathcal{O}_R\}$ is bounded. Thus, the constant $\ga_0$ and $C$ are independent of $\operatorname{diam}(\mathcal{O}_R(0))$. Next, we consider the domain
    $$\mathcal{O}_R'=\{(y,t): u(Ry,R^2 t)\leq R^2/2\} \subset \mathcal{O}_R.$$
    In $\mathcal{O}_R'$, we have
    $$
    v(y,t)\leq-\frac{1}{2}.
    $$
    Hence, it follows form \eqref{eq 5.22} that 
    \begin{equation*}
        \dd v\leq 2^{\ga_0} C, \quad \te{in} \ \mathcal{O}_R'.
    \end{equation*}
    Hence $(\dd u)(Ry,R^2 t)=\dd v(y,t)\leq 2^{\ga_0}C$ for $(y,t) \in \mathcal{O}_R'$. That means
    $$
    \dd u(x,t)\leq C_0, \quad \te{in} \ \mathcal{O}_R''=\{(x,t):u(x,t)\leq R^2/2\},
    $$
    where $C_0$ is an absolutely constants. The estimate above holds true for all large $R$. Thus we using Evans-Krylov theory \cite{NAKAMORI2015211}, we have for any $0<\bb<1$,
    $$
    \|D^2 u\|_{C^{0,\bb}(\mathcal{O}_R''\cap B_{R/2})}\leq C \frac{\|D^2 u\|_{L^\wq(\mathcal{O}_R''\cap B_R)}}{R^\bb}\leq\frac{C }{R^\bb}.
    $$
    Letting $R\to \wq$, and we finish the proof of Theorem \ref{thm rigidity theorem 2}.
\end{proof}

\section*{Acknowledgments}

The authors would like to thank Professor Xi-Nan Ma for the constant
encouragement in this subject, and also thank Xinqun Mei for his helpful discussion and encouragement.

\end{document}